\documentclass[12pt]{amsart}
\usepackage{ amsmath, amsthm, amsfonts, amssymb, color}
 \usepackage{mathrsfs}
\usepackage{amsfonts, amsmath}
 \usepackage{amsmath,amstext,amsthm,amssymb,amsxtra}
 \usepackage{txfonts} 
 \usepackage[colorlinks, citecolor=blue,pagebackref,hypertexnames=false]{hyperref}
 \allowdisplaybreaks
 \usepackage{pgf,tikz}

\begin{document}

 \baselineskip 16.6pt
\hfuzz=6pt

\widowpenalty=10000

\newtheorem{cl}{Claim}
\newtheorem{theorem}{Theorem}[section]
\newtheorem{proposition}[theorem]{Proposition}
\newtheorem{coro}[theorem]{Corollary}
\newtheorem{lemma}[theorem]{Lemma}
\newtheorem{definition}[theorem]{Definition}
\newtheorem{assum}{Assumption}[section]
\newtheorem{example}[theorem]{Example}
\newtheorem{remark}[theorem]{Remark}
\renewcommand{\theequation}
{\thesection.\arabic{equation}}

\def\SL{\sqrt H}

\newcommand{\mar}[1]{{\marginpar{\sffamily{\scriptsize
        #1}}}}

\newcommand{\as}[1]{{\mar{AS:#1}}}

\newcommand\R{\mathbb{R}}
\newcommand\RR{\mathbb{R}}
\newcommand\CC{\mathbb{C}}
\newcommand\NN{\mathbb{N}}
\newcommand\ZZ{\mathbb{Z}}
\def\RN {\mathbb{R}^n}
\renewcommand\Re{\operatorname{Re}}
\renewcommand\Im{\operatorname{Im}}

\newcommand{\mc}{\mathcal}
\newcommand\D{\mathcal{D}}
\def\hs{\hspace{0.33cm}}
\newcommand{\la}{\alpha}
\def \l {\alpha}
\newcommand{\eps}{\varepsilon}
\newcommand{\pl}{\partial}
\newcommand{\supp}{{\rm supp}{\hspace{.05cm}}}
\newcommand{\x}{\times}
\newcommand{\lag}{\langle}
\newcommand{\rag}{\rangle}

\newcommand\wrt{\,{\rm d}}
\newcommand{\botimes}{\bar{\otimes}}

\title[]{Campanato spaces via quantum semigroups}

\author{Guixiang Hong, Yuanyuan Jing and Ping Li}

 \address{Guixiang Hong, Institute for Advanced Study in Mathematics, Harbin Institute of Technology, 150001, Harbin, P.R. China}
\email{gxhong@hit.edu.cn}
 \address{Yuanyuan Jing, Institute for Advanced Study in Mathematics, Harbin Institute of Technology, 150001, Harbin, P.R. China}
\email{yuanyuanjing@whu.edu.cn}
\address{Ping Li, School of Mathematics and Systems Science, and Center for Mathematics Sciences, Wuhan University of Science and Technology, 430065, Wuhan, P.R. China}
\email{liping@whu.edu.cn}

\date{\today}
 \subjclass[2010]{Primary  46L52; Secondary 46L10, 47C15.}
\keywords{Campanato spaces, Lipschitz spaces, quantum semigroups, analytic semigroups, von Neumann algebras.}

\begin{abstract}
In this paper, we continue to investigate Campanato spaces via semigroups on von Neumann algebras. One of the main results is their coincidence with Lipschitz spaces for {\it all} regularity indices $\alpha > 0$ and for all {\it analytic} semigroups on von Neumann algebras {\it not necessarily being finite}; the desired self-improving property of Campanato spaces has also been verified. As a consequence, we demonstrate that the column Campanato spaces are isomorphic to the row ones for {\it all} $\alpha > 0$. This not only removes several restrictions in the previous work \cite{HJ24} by the first two authors, but also extends the previous results to any analytic semigroup, and thus resolves several problems left open in \cite{HJ24}. Both the results and the proof are new even in the commutative setting.
\end{abstract}

\maketitle
\section{Introduction}
\hskip\parindent
The inhomogeneous Campanato space with regularity index $\alpha\geq0$ on $\mathbb{R}^{n}$ (see e.g. \cite[Page 83]{s79} and \cite[Page 105]{JTW83}) is defined as a subset of $L^{\infty}(\mathbb{R}^{n})$ with the norm given by
\begin{equation}\label{c1}
 \|f\|_{\mathcal{L}_{\alpha}(\mathbb{R}^{n})}=\|f\|_{\infty}+\sup_{B}\frac{1}{|B|^{\frac{\alpha}{n}}}\left(\frac{1}{|B|}\int_{B}|f(x)-P_{B}f|^{2}dx\right)^{\frac{1}{2}},
\end{equation}
where the supremum is taken over all balls $B$ in $\mathbb{R}^{n}$, and $P_{B}f$ is the unique polynomial of degree at most $[\alpha]$---the integer part of $\alpha$---such that
$$\int_{B}[f(x)-P_{B}f(x)]x^{\theta}dx=0$$
for all multi-indices $\theta$ with $0\leq|\theta|\leq[\alpha]+1.$ It is well-known that the homogeneous part in \eqref{c1} gives the classical BMO norm when $\alpha=0$ on one hand, and it is equivalent to the homogeneous Lipschitz norm via differences when $\alpha>0$ (see e.g. \cite[Theorem 5.30]{g85} and \cite{TW80}) on the other hand. This in turn yields the characterization of inhomogeneous Campanato spaces in terms of Stein's Lipschitz spaces defined by the classical Poisson semigroup (see e.g. \cite[Page 142]{s70} or \eqref{l1} below).

In order to investigate the Campanato spaces on homogeneous spaces where the notion `polynomials' is not available, Doung and Yan \cite{duy05,duya05,dy09} introduced the homogeneous Campanato spaces associated to approximation identity $(A_t)_{t>0}$ satisfying  $m$-order Poisson upper bound by setting
\begin{equation*}
 P_{B}f(x)=A_{m,t_{B}}f(x)=[I-(I-A_{t_{B}})^{[\frac{\alpha}{m}]+1}]f(x)
\end{equation*}
 for a certain range of $\alpha$ depending on $m$, where $t_B$ is the radius of ball $B$. However, when $(A_{t})_{t>0}$ is the classical Poisson semigroup, it seems that they did not mention its equivalence with the homogeneous part of \eqref{c1}. While one of our main results---Theorem \ref{e25}---will deduce this equivalence, we also verify the equivalence for $0<\alpha<1/2$  in the appendix by exploiting basic properties of Poisson semigroup such as kernel estimates and Markovian property etc.. See Theorem \ref{c1c} and the subsequent remark.

Very recently, in the noncommutative framework where the geometric objects such as balls or cubes are absent, the first two authors introduced Campanato spaces defined purely by semigroups in  \cite{HJ24} with partial motivation from Mei's semigroup BMO theory  \cite{m08, mj12, tms19}. One of the main results of that paper is the isomorphism between the newly defined semigroup Campanato spaces \eqref{p1} and Stein's semigroup Lipschitz spaces \eqref{l1}, which are associated with those semigroups subordinated to quantum Markov semigroups. This in turn yields a striking new phenomenon which is usually not expected in noncommutative analysis, that is, the column Campanato spaces are isomorphic to the row ones for small $\alpha\neq0$. The self-improving property of Campanato spaces has also been obtained. However, those results have several drawbacks (see \cite[Remark 1.5]{HJ24}): the isomorphisms and self-improving property held only for Campanato spaces with regularity index $0<\alpha<2$; the underlying von Neumann algebra had to be finite; except its subordinated semigroups, it had been unclear whether the results held true or not for the quantum Markov semigroup itself.

In the present paper, we are able to remedy all the above-mentioned drawbacks and thus generalize the previous work \cite{HJ24} significantly. That is, we are going to establish the isomorphisms and self-improving property for all $\alpha>0$ and for any analytic semigroup (including the subordinated semigroup ones) on von Neumann algebras not necessarily being finite. Moreover, when going back to the classical setting, these results provide new characterizations or properties of Campanato spaces on $\mathbb R^n$, see Theorem \ref{c1c} and Proposition \ref{c2c} in the appendix for more details.

\smallskip

Let us state the main results precisely with the relavant notions given in the preliminary section. Let $\mathcal T=(T_{t})_{t>0}$ be a positive semigroup acting on a von Neumann algebra $\mathcal M$. Let $\alpha>0$. The column Campanato space $\mathcal{L}^{c}_{\alpha}(\mathcal{T})$ is defined as a subspace of $\mathcal M$ with a finite norm given by
 \begin{equation}\label{p1}
   \|f\|_{\mathcal{L}^{c}_{\alpha}(\mathcal{T})}= \|f\|_{\infty}+\sup_{t>0}\frac{1}{t^{\alpha}}\left\|T_{t}|(I-T_{t})^{[\alpha]+1}f|^{2}\right\|^{\frac{1}{2}}_{\infty}.
 \end{equation}
The row space $\mathcal{L}_{\alpha}^{r}(\mathcal T)$ is the space of all $f\in\mathcal M$ such that $f^{\ast}\in\mathcal{L}_{\alpha}^{c}(\mathcal T)$, equipped with the norm
 $\left\|f\right\|_{\mathcal L_{\alpha}^{r}(\mathcal T)}=\left\|f^{*}\right\|_{\mathcal L_{\alpha}^{c}(\mathcal T)}$. The mixture space $\mathcal{L}_{\alpha}^{cr}(\mathcal T)$ is defined as $\mathcal{L}_{\alpha}^{c}(\mathcal T)\cap \mathcal{L}_{\alpha}^{r}(\mathcal T)$ equipped with the intersection norm.
On the other hand, drawing inspiration from Stein \cite[Section 4.2]{s70}, 
the semigroup Lipschitz space ${\Lambda_{\alpha}(\mathcal{T})}$ is defined as the subspace of $\mathcal M$ with norm given by
\begin{equation}\label{l1}
 \|f\|_{\Lambda_{\alpha}(\mathcal{T})}=\|f\|_{\infty}+\sup_{t>0}\frac{1}{t^{\alpha-([\alpha]+1)}}\left\|\frac{\partial^{[\alpha]+1}T_{t}f}{\partial t^{[\alpha]+1}}\right\|_{\infty}.
\end{equation}
It is easy to derive $\left\|f^{*}\right\|_{\Lambda_{\alpha}({\mathcal T)}}=\left\|f\right\|_{\Lambda_{\alpha}({\mathcal T})}$ from the positivity of the semigroup.


The following is the first main result.

\begin{theorem}\label{e25}
Let $\mathcal T=(T_{t})_{t>0}$ be a 2-positive and contractive analytic semigroup acting on a von Neumann algebra $\mathcal M$ and $\alpha>0$. Then for $f\in \mathcal{M}$, we have
\begin{equation}\label{b2}
 \left\|f\right\|_{\mathcal L_{\alpha}^{c}(\mathcal T)}\simeq_{\alpha}\left\|f\right\|_{\Lambda_{\alpha}(\mathcal T)}.
\end{equation}
A similar assertion is true for the row Campanato spaces.
\end{theorem}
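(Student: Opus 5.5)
Set $k=[\alpha]+1$. The plan is to prove two inequalities, writing the Campanato seminorm as $\sup_t t^{-\alpha}\|T_t|(I-T_t)^k f|^2\|_\infty^{1/2}$.

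\emph{Tools.} Two facts will be used throughout. The first is the Kadison--Schwarz inequality $|T_t g|^2\le T_t|g|^2$, which holds because $T_t$ is 2-positive and contractive. The second is analyticity, which gives $\|t^m\partial_t^m T_t\|_{\mathcal M\to\mathcal M}\lesssim_m 1$ for every $m\ge 1$.

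\emph{Step 1: Lipschitz dominates Campanato.} Write
\[
(I-T_t)^k f=(-1)^k\int_{[0,t]^k}\partial_s^kT_{s_1+\dots+s_k}f\,ds .
\]
If the derivative is taken at small times, this only gives a bound of order $\int s^{\alpha-k}$, which diverges. So we first apply $T_t$:
\[
T_t(I-T_t)^k f=(-1)^k\int_{[0,t]^k} T^{(k)}_{t+\sum s_i}f\,ds .
\]
This is bounded by $t^k\,(t)^{\alpha-k}=t^\alpha$ times the Lipschitz seminorm, since $t+\sum s_i\ge t$. However, the quantity to be estimated is $T_t|g|^2$ rather than $|T_tg|^2$, and Kadison--Schwarz points the wrong way. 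To handle this, split $g=(I-T_t)^kf$ as
\[
g=\bigl(g-(I-T_t)^kT_{\epsilon t}f\bigr)+(I-T_t)^kT_{\epsilon t}f,
\]
or simply expand
\[
T_t|g|^2=T_t(g^*g)\le \|g\|_\infty^2 .
\]
Since $\|g\|_\infty$ itself is only $O(t^{\alpha})$ when $\alpha<k$, we need a genuinely good bound for $\|(I-T_t)^kf\|_\infty$. Obtain it by the classical Stein argument: write
\[
f=\sum_{j}(T_{2^{-j-1}t}-T_{2^{-j}t})f+T_{t}f,
\]
or better, use the Taylor-type identity with powers $(I-T_t)^k=\sum$ and integrate $k$ times from $\infty$, using $\|\partial_s^k T_sf\|\le s^{\alpha-k}$. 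The integral $\int_0^t$ of $s^{\alpha-k}$-type terms after successive integration by parts converges because $\alpha>k-1$, and terms involving lower derivatives at time $t$ are controlled by integrating from $\infty$ down, using $\alpha<k$. This yields $\|(I-T_t)^kf\|_\infty\lesssim t^\alpha\|f\|_{\Lambda_\alpha}$, and hence Campanato $\lesssim$ Lipschitz.

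\emph{Step 2: Campanato dominates Lipschitz.} Here Kadison--Schwarz points the right way: with $g=(I-T_t)^kf$,
\[
\|T_t g\|_\infty^2=\|\,|T_tg|^2\|\le \|T_t|g|^2\|\le t^{2\alpha}\|f\|_{\mathcal L^c_\alpha}^2 .
\]
So $\|T_t(I-T_t)^kf\|\lesssim t^\alpha$. Next we recover $\partial_t^kT_tf$. Using analyticity, $T_t(I-T_t)^k$ is close to $(-t)^k\partial^k T_{ct}$; more precisely, we invert via a telescoping or resolvent-type representation,
\[
\partial_t^kT_{t}f=\sum_{j\ge0}\partial^k T_{t}\,(\cdot)\,(I-T_{2^jt})^k\dots ,
\]
in the spirit of the dyadic decomposition
\[
f-\lim T_sf=\sum_j (T_{2^{j}t}-T_{2^{j+1}t})f .
\]
A cleaner route is a reproducing formula: choose $\phi$ such that $\int_0^\infty \phi(s)\,T_s(I-T_s)^k\,\frac{ds}{s}$ equals a Calderón-type identity via the functional calculus of the generator $A$ (analytic semigroups admit an $H^\infty$-type representation for functions such as $z^ke^{-z}$ expressed through $e^{-sz}(1-e^{-sz})^k$). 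Then
\[
t^k\partial^kT_tf=\int \psi(t,s)\,T_s(I-T_s)^kf\,\frac{ds}{s},
\]
with $\int|\psi(t,s)|\,s^\alpha\,ds/s\lesssim t^\alpha$. The main obstacle is constructing this formula with norm-convergent integrals using only analyticity, with no spectral theorem since $T_t$ need not be self-adjoint. I would first try an explicit finite-difference identity: by Taylor expansion of the analytic map $s\mapsto T_s$, iterate $\partial^k T_{2t}=t^{-k}(T_t-I)^kT_t\,\partial$-corrections plus a remainder involving $\partial^kT$ at doubled times; then sum the resulting geometric series in $2^{-j(k-\alpha)}$, which converges because $\alpha<k$. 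The $\|f\|_\infty$ terms are handled trivially, and the row case follows by applying the argument to $f^*$.
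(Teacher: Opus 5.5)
\textbf{There is a genuine gap, and it is in your Step 2 (Lipschitz $\lesssim$ Campanato).} Your first move there is correct, and the paper uses it too: Kadison--Schwarz gives $\|T_s(I-T_s)^kf\|_\infty\le s^{\alpha}\|f\|_{\dot{\mathcal L}^c_\alpha(\mathcal T)}$. After that you list three candidate strategies (a dyadic decomposition, a Calder\'on-type reproducing formula, a Taylor-correction iteration) but carry out none of them. The one concrete mechanism you indicate fails as stated: remainders at doubled times, summed as a geometric series with ratio $2^{-(k-\alpha)}$. Any identity of the form $I=(I-T_s)^k+\sum_{r=1}^k(-1)^{r+1}C_k^rT_{rs}$ leaves a remainder made of $k$ terms $\partial^kT_{t+rs}f$ with binomial weights. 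So with $s=t$ the per-step ratio is $\sum_{r=1}^kC_k^r(r+1)^{\alpha-k}$, not $2^{\alpha-k}$, and already for $k=2$, $\alpha=1$ this equals $1+\tfrac13>1$.

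The missing idea is the paper's Lemma~\ref{190}: dilate by a large integer $b=b(\alpha)$. From $I=(I-T_{bt})^k+\sum_{r=1}^k(-1)^{r+1}C_k^rT_{brt}$ one gets
\[
t^{k-\alpha}\|\partial_t^kT_tf\|_\infty\le t^{k-\alpha}\|\partial_t^kT_t(I-T_{bt})^kf\|_\infty+\Big(\sum_{r=1}^kC_k^r(br+1)^{\alpha-k}\Big)\|f\|_{\dot\Lambda_\alpha(\mathcal T)} .
\]
The condition $\alpha<k$ is used exactly to choose $b$ (e.g.\ $b=2^{[k/(k-\alpha)]+1}$) so that the bracket is $<1$. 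The last term can then be absorbed: take the supremum over $t\ge\epsilon$, which is finite by analyticity because the remainder lives at larger times, and then let $\epsilon\to0$. In the first term, analyticity gives $\|\partial^kT_{t/2}\|\lesssim t^{-k}$, which leaves $t^{-\alpha}\|T_{t/2}(I-T_{bt})^kf\|_\infty$. To reach your Kadison--Schwarz bound you need the change of scale $I-T_{bt}=(I-T_{t/2})(I+T_{t/2}+\cdots+T_{(2b-1)t/2})$ from Lemma~\ref{531}, which costs at most $(2b)^k$ by contractivity. With these two devices Step 2 closes. Without them it remains a list of strategies, and you yourself flag its core as the main obstacle.

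Step 1 reaches a true inequality, $\|(I-T_t)^kf\|_\infty\lesssim_\alpha t^{\alpha}\|f\|_{\dot\Lambda_\alpha(\mathcal T)}$, but your justification is off. The route you discarded is the right one. The integral $\int_{[0,t]^k}(s_1+\cdots+s_k)^{\alpha-k}\,ds\le\frac{k^{\alpha}}{\alpha\,(k-1)!}\,t^{\alpha}$ is finite for every $\alpha>0$, because the set where $s_1+\cdots+s_k\approx r$ has measure $\sim r^{k-1}\,dr$. So your first integral representation, combined with $\|T_t|g|^2\|_\infty\le\|g\|_\infty^2$, proves this direction at once. That is more direct than the paper's argument, which uses Lemma~\ref{531} with $Q=1$ plus an absorption, and it avoids any a priori finiteness issue. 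The substitute you describe, \emph{integrating from $\infty$}, does not work as written: since $\alpha-k\ge-1$, the function $u^{\alpha-k}$ is not integrable at $\infty$.
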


A consequence is the following

\begin{coro}\label{jy}
Let $\mathcal T=(T_{t})_{t>0}$ be as above and $\alpha>0$. Then we have the following isomorphisms with equivalent norms,
\begin{equation}\label{jy0}
  \mathcal{L}_{\alpha}^{c}(\mathcal T)=\mathcal{L}_{\alpha}^{r}(\mathcal T)=\mathcal L_{\alpha}^{cr}(\mathcal T).
\end{equation}
\end{coro}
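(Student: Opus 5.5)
The corollary will follow by combining Theorem \ref{e25} for the column spaces, its row analogue, and the fact noted after \eqref{l1} that $\|f^{*}\|_{\Lambda_{\alpha}(\mathcal T)}=\|f\|_{\Lambda_{\alpha}(\mathcal T)}$.

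First, for $f\in\mathcal M$, Theorem \ref{e25} gives
$$\|f\|_{\mathcal L_{\alpha}^{c}(\mathcal T)}\simeq_{\alpha}\|f\|_{\Lambda_{\alpha}(\mathcal T)}.$$
Next, apply the same theorem to $f^{*}\in\mathcal M$. By the definition of the row norm and the $*$-invariance of the Lipschitz norm,
$$\|f\|_{\mathcal L_{\alpha}^{r}(\mathcal T)}=\|f^{*}\|_{\mathcal L_{\alpha}^{c}(\mathcal T)}\simeq_{\alpha}\|f^{*}\|_{\Lambda_{\alpha}(\mathcal T)}=\|f\|_{\Lambda_{\alpha}(\mathcal T)}.$$
This is exactly the row assertion of Theorem \ref{e25}.

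Consequently $\|f\|_{\mathcal L_{\alpha}^{c}(\mathcal T)}\simeq_{\alpha}\|f\|_{\mathcal L_{\alpha}^{r}(\mathcal T)}$ for every $f\in\mathcal M$. An element of $\mathcal M$ therefore has finite column norm if and only if it has finite row norm, so $\mathcal L_{\alpha}^{c}(\mathcal T)=\mathcal L_{\alpha}^{r}(\mathcal T)$ as sets, with equivalent norms.

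For the mixture space, the intersection norm is $\max\{\|f\|_{\mathcal L_{\alpha}^{c}(\mathcal T)},\|f\|_{\mathcal L_{\alpha}^{r}(\mathcal T)}\}$ (or their sum, which is equivalent). This is bounded below by each of the two norms and above by a constant multiple of either one, by the previous step. Hence $\mathcal L_{\alpha}^{cr}(\mathcal T)$ coincides with both spaces with equivalent norms, which gives \eqref{jy0}.

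All the analytic work is contained in Theorem \ref{e25}; the only point needing care is the $*$-invariance of the Lipschitz norm. It holds because positivity implies $T_t(f^{*})=(T_tf)^{*}$. Differentiating in $t$ preserves this, so $\partial_t^{k}T_t(f^{*})=(\partial_t^{k}T_tf)^{*}$, and the operator norm is $*$-invariant.
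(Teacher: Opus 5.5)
Your proposal is correct and follows the paper's route: the paper presents Corollary \ref{jy} as an immediate consequence of Theorem \ref{e25}, with the row case obtained by applying the theorem to $f^{*}$ and using the $*$-invariance $\|f^{*}\|_{\Lambda_{\alpha}(\mathcal T)}=\|f\|_{\Lambda_{\alpha}(\mathcal T)}$ noted after \eqref{l1}. Your justification of that invariance fills in the detail the paper leaves as ``easy'': positive maps are $*$-preserving, and the adjoint commutes with the $t$-derivatives because it is weak-$*$ continuous.
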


As mentioned above, these results were known previously only when $0<\alpha<2$ and the semigroup $\mathcal T$ is subordinated to some Markov semigroup on finite von Neumann algebra \cite{HJ24}. Analytic semigroups form a very large class of semigroups besides the subordinated ones (see e.g. \cite[Appendix]{xu24}) which might admit fertile applications; a similar phenomenon has been observed in Xu's work \cite{xu24} on vector-valued Littlewood-Paley-Stein theory.  

In \cite[Open Problem~(P.5)]{g19}, Gonz\'alez-P\'erez asked whether the homogeneous semigroup H\"older spaces admit an equivalent mixed row--column Campanato characterization. The spaces there are defined using Poisson semigroups subordinated
to Markov semigroups. Such Poisson semigroups are unital and completely positive, hence $2$-positive and contractive on $\mathcal{M}$; they are also analytic on $\mathcal{M}$ by \cite[Remark~3.2]{g19}. Thus, they satisfy the hypotheses of Theorem~\ref{e25}. For $0<\alpha<1$, when $\mathcal{T}$ is such a Poisson semigroup, the seminorm parts of \eqref{p1} and \eqref{l1} coincide with the corresponding Campanato and H\"older seminorms in \cite{g19}. The desired equivalence of homogeneous seminorms follows from Theorem~\ref{e25}. This answers the norm-equivalence question in (P.5). Moreover, Theorem~\ref{e25} extends this characterization to all $\alpha>0$ for general $2$-positive contractive analytic semigroups.

It is \eqref{b2} that provides a new characterization of classical Campanato or Lipschitz spaces by letting $(T_{t})_{t>0}$ be the Poisson semigroup on $\mathbb R^n$. Indeed, as recalled at the beginning of the introduction, Stein's semigroup Lipschitz norm is equivalent to the classical inhomogeneous Campanato norm, which by \eqref{b2}, is equivalent to the one \eqref{p1} defined purely via Poisson semigroup for all $\alpha>0$. This seems to have been unknown in the literature before, while the cases $0<\alpha<1/2$ could be concluded by the classical approach (see the Remark after Theorem \ref{c1c}). It should be pointed out that when $(T_{t})_{t>0}$ is the heat semigroup on $\mathbb R^n$, the equivalence \eqref{b2} is well-known, as can be seen from the results in \cite{ddy05,dy09}, \cite[Theorem 6.3.7]{gl09} and \cite[Corollary 3 (i)]{t82}. One can find more characterizations on classical Campanato or Lipschitz spaces in Appendix A. Moreover, our Theorem \ref{e25} also applies to the Poisson semigroup subordinated to the Ornstein-Uhlenbeck semigroup on $\mathbb R^n$ equipped with the Gaussian measure, and more information on other related spaces and characterizations can be found in Appendix B.

\begin{remark}
{\rm  (i) Let us mention again that Corollary \ref{jy} presents a surprising phenomenon in noncommutative analysis since it is well-known that the isomorphisms \eqref{jy0} may not hold for $\alpha=0$ (see, for instance, \cite[Remark 2.11]{chlm}, \cite[Appendix]{px97}, \cite[Appendix B]{gjp21}, \cite{Mei07} or \cite[Page 6]{jaje23}).

(ii) If additionally $(T_{t})_{t>0}$ is assumed to be completely positive, then one can deduce that the isomorphisms \eqref{jy0} can be automatically improved to complete isomorphisms,
 \begin{align}\label{js}
 \|g\|_{M_{n}( \mathcal L_{\alpha}^{r}(\mathcal T))}=\|g^{*}\|_{M_{n}( \mathcal L_{\alpha}^{c}(\mathcal T))}\simeq\|g\|_{M_{n}( \mathcal L_{\alpha}^{c}(\mathcal T))},
\end{align}
where $\mathcal L_{\alpha}^{\dag}(\mathcal T)$ is equipped with the operator space structure by replacing respectively $\mathcal M$ and $T_{t}$ with $M_{n}\otimes\mathcal M$ and $I_n\otimes T_t$ such that $\|g\|_{M_{n}( \mathcal L_{\alpha}^{\dag}(\mathcal T))}=\|g\|_{\mathcal L_{\alpha}^{\dag}((I_n\otimes T_t )_{t>0})}$.
Here $\dag=\{c,r,cr\}$ and $M_n$ is the algebra of $n\times n$ complex matrices and $I_n$ is the identity map on $M_n$.}
\end{remark}

The new ingredients that allow us to conclude Theorem \ref{e25} beyond the restrictions $ 0 < \alpha <2$ and finite von Neumann algebras in \cite{HJ24} are the observations appearing in Lemmas \ref{190} and \ref{531}. These two lemmas enable us to bypass the use of the regeneration formulas (4.12) and (4.15) in \cite{HJ24}, which not only depend on the finiteness of the von Neumann algebra but also induce the restriction $0 < \alpha <2$. On the other hand, the main feature of analytic semigroup utilized in the present paper is Proposition \ref{wss}, which derives from holomorphic functional calculus.

\smallskip

The above new ingredients also allow us to obtain the self-improving property of the semigroup Campanato spaces for all $\alpha>0$, which might admit further applications to spectral multipliers and Schr\"odinger groups as in the case of BMO spaces (cf. \cite{sgd11,hm09, DSY, cdly20, FHW}).  Let $\alpha>0$ and $k$ be an integer such that $k>\alpha$. For $f\in\mathcal M$, the space $\mathcal L_{\alpha,k}^{c}(\mathcal T)$  is defined as a subset of $\mathcal M$ with finite norm given by
\begin{equation}\label{09}
 \left\|f\right\|_{\mathcal L_{\alpha,k}^{c}(\mathcal T)}=\|f\|_{\infty}+\sup_{t>0}\frac{1}{t^{\alpha}}  \left\|T_{t}|(I-T_{t})^{k}f|^{2}\right\|_{\infty}^{\frac{1}{2}}.
 \end{equation}
The spaces $\mathcal L_{\alpha,k}^{r}(\mathcal T)$ and $\mathcal L_{\alpha,k}^{cr}(\mathcal T)$ are defined in a standard way.

\begin{theorem}\label{26}
Let $\mathcal T$ be a 2-positive and contractive analytic semigroup and $\alpha>0$. Let $k$ be any integer greater than $\alpha$. Then the spaces
$\mathcal L_{\alpha,k}^{c}(\mathcal T)$ and $\mathcal L_{\alpha}^{c}(\mathcal T)$ coincide with equivalent norms.
\end{theorem}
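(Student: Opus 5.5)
The plan is to go through the Lipschitz space, using Theorem \ref{e25} as the bridge. Since $\mathcal L_{\alpha}^{c}(\mathcal T)=\mathcal L_{\alpha,[\alpha]+1}^{c}(\mathcal T)$ and Theorem \ref{e25} gives $\|f\|_{\mathcal L^{c}_{\alpha}}\simeq\|f\|_{\Lambda_\alpha}$, it suffices to prove
\begin{equation*}
\|f\|_{\mathcal L^{c}_{\alpha,k}(\mathcal T)}\simeq_{\alpha,k}\|f\|_{\Lambda_{\alpha}(\mathcal T)}
\end{equation*}
for every integer $k>\alpha$. I would redo the two halves of the proof of Theorem \ref{e25} with $[\alpha]+1$ replaced by a general $k>\alpha$. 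A convenient intermediate fact is that Stein's Lipschitz norm can be computed with any derivative order $m>\alpha$:
\begin{equation*}
\|f\|_{\infty}+\sup_{t>0}t^{m-\alpha}\|\partial_t^{m}T_tf\|_\infty\simeq \|f\|_{\Lambda_\alpha(\mathcal T)} .
\end{equation*}
In one direction, analyticity (Proposition \ref{wss}) gives $\|t\,\partial_t T_t\|\lesssim 1$, so $\partial_t^{m+1}T_{2t}f=\partial_t T_t\,\partial^m_t T_tf$ produces one more factor $t^{-1}$. In the other direction, integrate from $t$ to $\infty$. The boundary terms vanish because $\|\partial_s^m T_s f\|\lesssim s^{-m}\|f\|_\infty$ and $m>\alpha$.

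\emph{Step 1: $\Lambda_\alpha\Rightarrow\mathcal L_{\alpha,k}$.} By Kadison--Schwarz for the 2-positive contraction $T_t$, we have $T_t|g|^2\ge |T_tg|^2$ in the reverse direction only. So instead I bound $\|T_t|g|^2\|\le\|g\|_\infty^2$ with $g=(I-T_t)^kf$, using positivity and contractivity. It then remains to show $\|(I-T_t)^kf\|_\infty\lesssim t^\alpha\|f\|_{\Lambda_\alpha}$. Write
\begin{equation*}
(I-T_t)^k f=(-1)^k\int_{[0,t]^k}\partial^k T_{s_1+\dots+s_k}f\,ds .
\end{equation*}
Using the intermediate fact with $m=k$, the integrand is bounded by $(s_1+\cdots+s_k)^{\alpha-k}\|f\|_{\Lambda_\alpha}$, which is integrable since $\alpha>0$ and gives $t^\alpha$. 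This step is routine.

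\emph{Step 2: $\mathcal L_{\alpha,k}\Rightarrow\Lambda_\alpha$.} This is the substantive part. I would follow the route of Theorem \ref{e25}, namely Lemmas \ref{190} and \ref{531}, to pass from the square-function quantity $\|T_t|(I-T_t)^kf|^2\|^{1/2}$ to $\|T_t(I-T_t)^kf\|_\infty$. By Kadison--Schwarz, $|T_t(I-T_t)^kf|^2\le T_t|(I-T_t)^kf|^2$, so
\begin{equation*}
\|T_t(I-T_t)^kf\|_\infty\le t^\alpha\|f\|_{\mathcal L_{\alpha,k}}.
\end{equation*}
The goal is then to recover $t^{k-\alpha}\|\partial^k_tT_tf\|$ from the bounds on $\|T_t(I-T_t)^kf\|$. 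For this I would use the holomorphic functional calculus of Proposition \ref{wss}: write
\begin{equation*}
(tA)^ke^{-2tA}=\phi(tA)\,e^{-tA}(1-e^{-tA})^k+\text{remainder},
\end{equation*}
or, more robustly, use a dyadic telescoping identity. The identity is
\begin{equation*}
t^k\partial_t^kT_{t}f=\sum_{j\ge0}\bigl[\Psi(2^{-j}t)-\Psi(2^{-j-1}t)\bigr]f ,
\end{equation*}
where each difference factors as a bounded analytic operator times $T_{s}(I-T_{s})^kf$ with $s\simeq2^{-j}t$. Summing the resulting bounds $\lesssim (2^{-j}t)^{\alpha}\cdot 2^{-j(k-\alpha)\cdot0}$ needs care: the factor $t^k/(2^{-j}t)^k$ must be compensated. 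So I would instead expand upward, over scales $s=2^jt\ge t$, where $(I-T_s)^k$ behaves like the identity plus decaying terms. The series then converges geometrically because $\alpha<k$. That both $\alpha>0$ and $k>\alpha$ make the two geometric sums converge is exactly what allows arbitrary $k$.

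The main obstacle is Step 2: constructing the factorization of $\partial_t^k T_t$ through operators of the form $T_s(I-T_s)^k$ with uniformly bounded analytic coefficients on $\mathcal M$, where $\mathcal M$ need not be finite and no $L^2$ spectral theory is available. I would first try to reduce, via Lemma \ref{531}, to the already established case $k=[\alpha]+1$. The estimate $\|T_t(I-T_t)^{[\alpha]+1}g\|\lesssim$ applied to $g=(I-T_t)^{k-[\alpha]-1}$-type decompositions would give a comparison between $\mathcal L_{\alpha,k}$ and $\mathcal L_{\alpha,k+1}$. The direction $\mathcal L_{\alpha,k}\subset\mathcal L_{\alpha,k+1}$ is trivial from the operator inequality with $\|I-T_t\|\le2$; the reverse direction is the genuine self-improvement and rests on Step 2.
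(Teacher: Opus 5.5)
\textbf{Verdict: genuine gap in Step 2.} The hard inclusion $\mathcal L^c_{\alpha,k}(\mathcal T)\subset\Lambda_\alpha(\mathcal T)$ is not proved.

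\textbf{What is correct.} The following parts of your proposal hold up:
\begin{itemize}
\item the reduction to $\|f\|_{\dot{\mathcal L}^c_{\alpha,k}(\mathcal T)}\simeq\|f\|_{\dot\Lambda_\alpha(\mathcal T)}$;
\item the independence of the Lipschitz norm from the derivative order $m>\alpha$ (integrating $\partial^{m+1}_sT_sf$ from $t$ to $\infty$ is a clean substitute for the absorption in Proposition \ref{20}(ii));
\item Step 1 (the direct $k$-fold integral is valid for every $\alpha>0$, whereas the paper uses it only for $0<\alpha<1$);
\item the Kadison--Schwarz reduction $\|T_s(I-T_s)^kf\|_\infty\le s^\alpha\|f\|_{\dot{\mathcal L}^c_{\alpha,k}(\mathcal T)}$.
\end{itemize}

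\textbf{The gap.} The decisive estimate of Step 2 is $t^{k-\alpha}\|\partial_t^kT_tf\|_\infty\lesssim\sup_{s>0}s^{-\alpha}\|T_s(I-T_s)^kf\|_\infty$, and it is never established. You sketch several strategies: a functional-calculus identity ``with remainder'', a downward telescoping that you rightly abandon, an unspecified ``upward series'', and a reduction to $k=[\alpha]+1$. You carry none of them out, and you end by calling exactly this step the main obstacle. The reduction to $k=[\alpha]+1$ is circular, as you note yourself, and its displayed estimate has no right-hand side. A minor point: $\mathcal L^c_{\alpha,k}\subset\mathcal L^c_{\alpha,k+1}$ needs Kadison--Schwarz, $|(I-T_t)g|^2\le 2|g|^2+2T_t|g|^2$, not just $\|I-T_t\|\le 2$.

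\textbf{How to close it (the paper's route).} The missing step is purely algebraic; it is the route you name at the start but do not execute, namely Lemma \ref{wd} combined with Lemma \ref{531}. No $L^2$ spectral theory is involved. Write $(\partial^kT)_t$ for $\partial_s^kT_s|_{s=t}$.
\begin{itemize}
\item Fix one large separation $b=2^{[k/(k-\alpha)]+1}$ and write $f=(I-T_{bt})^kf+\sum_{r=1}^k(-1)^{r+1}C_k^rT_{brt}f$.
\item After applying $t^{k-\alpha}(\partial^kT)_t$, the second part contributes at most $\sum_{r=1}^kC_k^r(br+1)^{\alpha-k}\sup_{s>0}s^{k-\alpha}\|\partial_s^kT_sf\|_\infty$. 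The coefficient is $<1$ precisely because $k>\alpha$, so this term is absorbed.
\item The absorption is legitimate if you take the supremum over $t\ge\epsilon$. That truncated supremum is finite by Proposition \ref{wss}, and the absorbed terms live at the larger scales $(br+1)t\ge t$.
\item For the first part, $(\partial^kT)_t=(\partial^kT)_{t/2}T_{t/2}$. The mismatch between the averaging scale $t/2$ and the difference scale $bt$ is removed by $I-T_{bt}=(I-T_{t/2})\sum_{j=0}^{2b-1}T_{jt/2}$, the identity behind Lemma \ref{531}.
\item With your norm-level reduction you can simply commute the operators, giving $\|T_{t/2}(I-T_{bt})^kf\|_\infty\le(2b)^k(t/2)^\alpha\|f\|_{\dot{\mathcal L}^c_{\alpha,k}(\mathcal T)}$. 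Proposition \ref{wss} gives $\|(\partial^kT)_{t/2}\|\lesssim_k t^{-k}$, which finishes the estimate.
\end{itemize}
What does not work is letting the separation run through $2^jt$ indefinitely while the averaging stays at scale $t$. The factorization constant then grows like $2^{jk}$, so you must stop at one fixed $b$ and absorb.

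\textbf{Alternative route.} Your first idea is in fact exact, with no remainder. The function $\phi(z)=z^ke^{-z}(1-e^{-z})^{-k}$ is bounded and holomorphic on $\Sigma_\theta$ for $\theta<\pi/2$, and $\phi-(1+z)^{-1}\in H^\infty_0(\Sigma_\theta)$. Hence $\phi(tA):=(\phi-(1+z)^{-1})(tA)+(I+tA)^{-1}$ is bounded uniformly in $t>0$, and $(tA)^kT_{2t}=\phi(tA)\,T_t(I-T_t)^k$. Combined with your Kadison--Schwarz bound, this gives the missing estimate directly.
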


Our approach is purely algebraic and does not require any further condition such as kernel estimates or Markovian property on the semigroup. This would admit rich applications whether or not the underlying von Neumann algebra is commutative.  We invite the reader to compare our algebraic method appearing in Section 3 with the classical arguments depending on kernel estimates and markovian property appearing in Appendix A.

\begin{remark}
{\rm As elaborated in the previous article (see \cite[Remark 1.5]{HJ24}), this paper remains focused on the study of inhomogeneous Campanato/Lipschitz spaces because no satisfactory noncommutative analogue of local $L_p$-spaces or tempered distributions exists to support a rigorous definition of the homogeneous versions. Crucially, every proof relies only on the homogeneous part of the norm--independent of the $\|\cdot\|_\infty$ term--so that, should a suitable noncommutative ``local" framework be developed in the future, all results herein extend directly and without substantive modification to the homogeneous setting. }
\end{remark}

\smallskip
The layout of the paper is as follows: Section 2 contains some basic assumptions about the semigroups of operators under consideration. Section 3 is devoted to the proof of Theorem \ref{e25}. In Section 4 we prove the self-improving property---Theorem \ref{26}. Finally, in the appendix, we collect all the characterizations of classical Campanato/Lipschitz spaces closely related to the ones in the present paper, and in particular give a `classical' proof of the equivalence with the spaces defined purely via Poisson semigroup for $0<\alpha<1/2$.


Throughout, we will use the notation $A\lesssim B$ to denote that $A\leq CB$ for a positive constant $C$ which is independent of the main parameters, and the notation $A\lesssim_{\alpha}B$ to indicate $A\leq C_{\alpha}B$ for a positive constant $C_{\alpha}$ which depends only on the parameter $\alpha$. The equivalence $A\simeq_{\alpha}B$ will mean $A\lesssim_{\alpha}B$ and $B\lesssim_{\alpha}A$.

\section{Preliminary \label{s2}}

\subsection{ Completely positive and contractive semigroups}
A von Neumann algebra is a weak-$*$-closed $\ast$-subalgebra containing the unit  of $B(H)$ for some Hilbert space $H$. Let $\mathcal M$ be a von Neumann algebra, and the operator norm is denoted by $\|\cdot\|_\infty$.  We say a map $T$ from $\mathcal M$ to $\mathcal M$ is positive if $Tf\geq0$ for $f\geq0$. For $n\geq 1$, a map $T$ is called $n$-positive if $I_{n}\otimes T$ is positive on $M_{n}\otimes \mathcal{M}$; and $T$ is called completely positive if it is $n$-positive for all $n$. Here $M_{n}$ is the algebra of $n\times n$ complex matrices and $I_{n}$ is the identity operator on $M_{n}$.

A family of linear operators $\mathcal T=(T_{t})_{t>0}$ is said to be a semigroup if for any $s,t>0$,
$$T_{0}=id \quad\text{and}\quad T_{t}T_{s}=T_{t+s}.$$
In this article, a semigroup $\mathcal T=(T_{t})_{t>0}$ is called 2-positive and contractive if it satisfies the following properties:
\begin{itemize}
  \item [(i)] Each $T_{t}$ is a 2-positive map on $\mathcal M$ for any $t>0$;
  \item [(ii)] Each $T_{t}$ is contractive on $\mathcal M$, i.e., $\|T_{t}f\|_{\infty}\leq\|f\|_{\infty}$ for any $t>0$ and $f\in \mathcal M$;
  \item [(iii)] For any $f\in \mathcal{M}$, $T_{t}f$ converges to $f$ in the weak-$*$ topology of $\mathcal{M}$ as $t\rightarrow0^{+}$.
\end{itemize}

As usual such a semigroup $\mathcal T=(T_{t})_{t>0}$ always admits an infinitesimal generator
\begin{equation*}
  A=\lim_{t\rightarrow0}\frac{id-T_{t}}{t}.
\end{equation*}
Then we may write $T_{t}=e^{-tA}$ and its weak-$*$ domain in $\mathcal{M}$ is defined as
\begin{equation*}
 \mathrm{dom}_{\infty}(A)=\left\{f\in \mathcal{M}: \,\,\lim_{t\rightarrow0}\frac{f-T_{t}f}{t}\,\, \text{exists}\right\},
\end{equation*}
where the limit is taken in the sense of the weak-$*$ topology of $\mathcal{M}$. By \cite[Lemma 1.1]{ebd80} and \cite[P694]{mj12}, one knows that $\mathrm{dom}_{\infty}(A)$ is weak-$*$ dense in  $\mathcal{M}$.


%
Given a semigroup $\mathcal T=(e^{-tA})_{t> 0}$, its subordinated Poisson semigroup $\mathcal P=(P_{t})_{t> 0}$ is defined by $P_{t}=e^{-t\sqrt{A}}$, which has the following identity (see e.g. \cite[Page 47]{s02}),
\begin{equation*}
  P_{t}=\frac{1}{2\sqrt{\pi}}\int_{0}^{\infty}t e^{-\frac{t^{2}}{4u}}u^{-\frac{3}{2}}T_{u}du.
\end{equation*}
\par
It follows from \cite[Corollary 2.8]{ch74} that the Kadison-Schwarz inequality holds for every unital 2-positive map on $\mathcal M$. The same argument shows that the unitality assumption may be replaced by the contractivity. More precisely, if $T:\mathcal M\to\mathcal M$ is a $2$-positive and contractive map, then for all $f\in\mathcal M$,
\begin{equation}\label{1}
|T(f)|^{2} \leq T(|f|^{2}).
\end{equation}

\subsection{Bounded analytic semigroups}
Because of \eqref{1}, we need to consider contractive semigroups in the present paper; however, all the statements in the present subsection hold for any bounded semigroups.
Let us introduce some definitions and simple facts about sectorial operators and analyticity. For any $\omega \in (0,\pi)$, let
$\Sigma_{\omega}=\{z\in \mathbb{C}\setminus\{0\}: |Arg(z)|<\omega\}$
be the open sector of angle $2\omega$ around the half-line $(0,\infty).$ We denote by $\overline{\Sigma_\omega}$ its closure in $\mathbb C$,
which contains the origin. Let $\sigma(A)$ be the spectrum of $A$ and for any $z\in \mathbb{C}\setminus\sigma(A)$, $R(z,A)$ be the corresponding resolvent operator. Recall that the operator $A$ is called a sectorial operator of type $\omega$ if  $A$ is closed, weak-$*$ densely defined, $\sigma(A)\subset\overline{\Sigma_{\omega}}$, and for any $\theta\in (\omega, \pi)$ there exists a constant $K_{\theta}>0$ such that
\begin{equation}\label{lkls}
\|zR(z,A)\|\leq K_{\theta},\quad z\in \mathbb{C}\setminus\overline{\Sigma_{\theta}}.
\end{equation}

Let $-A$ be the negative infinitesimal generator of a bounded weak-$*$ continuous semigroup $(T_t)_{t>0}$ on $\mathcal{M}$.
It is easy to check that $A$ is a sectorial operator of type $\frac{\pi}{2}$; moreover, for $0<\beta<1$, $A^{\beta}$ is a sectorial operator of type $\frac{\beta\pi}{2}$, see e.g. \cite[Proposition 2.5]{xu24}.

Now we introduce the notion of analytic semigroups on $\mathcal M$; similar definitions over general locally convex spaces can be found in \cite[Chapter 9]{kh64}.


\begin{definition}\label{sddddd}
Let $(T_t)_{t>0}$ be a bounded weak-$*$ continuous semigroup on $\mathcal{M}$. Then $(T_t)_{t>0}$ is called a bounded analytic semigroup if there exists a positive angle $\delta \in (0, \pi / 2)$ such that $(T_t)_{t > 0}$ can be extended to a bounded map $z \in \Sigma_\delta \mapsto T_z \in B(\mathcal M)$ satisfying
$$
\forall x \in \mathcal{M},\ \forall \varphi \in L^{1}(\mathcal{M}),\quad
z \mapsto \langle T_z x, \varphi \rangle
\quad\text{is holomorphic on }\Sigma_\delta.$$
\end{definition}

The following result, by a verbatim argument in \cite[Chapter 1.5]{GJA85}, establishes the connection between analyticity and sectoriality.

\begin{lemma}\label{ms}
The semigroup of operators $(e^{-tA})_{t>0}$ is bounded and analytic if and only if there exists  $\omega\in(0,\frac{\pi}{2})$ such that $A$ is a sectorial operator of type $\omega$.
\end{lemma}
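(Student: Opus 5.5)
The plan is to prove Lemma \ref{ms} by transferring the classical Banach-space argument of \cite[Chapter 1.5]{GJA85} to the weak-$*$ setting, treating both implications through the duality pairing $\langle\cdot,\varphi\rangle$ with $\varphi\in L^{1}(\mathcal M)$. Throughout, $(T_t)_{t>0}$ is the adjoint of a strongly continuous semigroup $(S_t)$ on the predual $L^1(\mathcal M)$ (weak-$*$ continuity gives this). Its generator is then $A=B^{*}$, and $R(z,A)=R(z,B)^{*}$ with equal norms. This observation lets us use the predual semigroup whenever convenient, but I will argue directly with pairings.

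\textbf{Analytic $\Rightarrow$ sectorial.} Suppose $T_z$ extends boundedly to $\Sigma_\delta$ with $\sup_{z\in\Sigma_{\delta'}}\|T_z\|\le M$ for every $\delta'<\delta$. We may assume this uniform bound on smaller closed subsectors, which is the standard meaning of ``bounded''. For $\theta\in(-\delta,\delta)$, the family $t\mapsto T_{te^{i\theta}}$ is a bounded weak-$*$ continuous semigroup whose generator is $e^{i\theta}A$. This is checked by differentiating $\langle T_{te^{i\theta}}x,\varphi\rangle$ at $t=0$ using holomorphy and Cauchy's formula. The Laplace transform
\[
\langle R(\lambda,-e^{i\theta}A)x,\varphi\rangle=\int_0^\infty e^{-\lambda t}\langle T_{te^{i\theta}}x,\varphi\rangle\,dt,\qquad \Re\lambda>0,
\]
then gives $\|R(\lambda, -e^{i\theta}A)\|\le M/\Re\lambda$. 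Rotating back, we get $\|zR(z,A)\|\le K$ on the complement of $\overline{\Sigma_{\pi/2-\delta'}}$, uniformly on closed subsectors. So $A$ is sectorial of type $\omega=\pi/2-\delta$. Closedness and weak-$*$ density of the domain come from Section \ref{s2}.

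\textbf{Sectorial $\Rightarrow$ analytic.} Suppose $A$ is sectorial of type $\omega<\pi/2$. Fix $\theta\in(\omega,\pi/2)$ and let $\Gamma$ be the boundary of $\Sigma_\theta$, oriented appropriately. Define, for $z\in\Sigma_{\pi/2-\theta}$,
\[
T_z=\frac{1}{2\pi i}\int_\Gamma e^{-z\lambda}R(\lambda,A)\,d\lambda,
\]
where the integral is taken in the weak-$*$ sense, that is, paired against $\varphi\in L^1(\mathcal M)$. Since $|e^{-z\lambda}|$ decays exponentially along $\Gamma$ for such $z$, and $\|R(\lambda,A)\|\le K_\theta/|\lambda|$, the integral converges away from $0$. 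The singularity near $0$ is handled by the usual substitution $\lambda\mapsto\lambda/|z|$, which makes $\|T_z\|$ uniformly bounded on $\Sigma_{\pi/2-\theta-\epsilon}$. Holomorphy of $z\mapsto\langle T_z x,\varphi\rangle$ follows by differentiating under the integral via dominated convergence.

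The remaining steps are as follows.
\begin{enumerate}
\item Show that $T_{z}T_{w}=T_{z+w}$ by the resolvent identity and Fubini on two shifted contours.
\item Show that $T_t$ for real $t$ coincides with the given $e^{-tA}$. The plan is to compare Laplace transforms: both have Laplace transform $R(-\lambda,A)$ for large $\Re\lambda$, and uniqueness of the scalar Laplace transform applies to each pairing.
\item Show that $T_t\to I$ weak-$*$, which is inherited from the given semigroup.
\end{enumerate}

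The main obstacle is the weak-$*$ (non-strongly-continuous) setting. Vector-valued Bochner integrals of $R(\lambda,A)x$ need not make sense in norm, so every identity must be verified pairwise against $\varphi\in L^1(\mathcal M)$, with Fubini applied to scalar functions. The Laplace-transform identification in step 2 is also delicate. I would circumvent both issues by passing to the predual: $A$ sectorial implies $B$ sectorial with the same constants. Then the classical theorem \cite{GJA85} applied to $(S_t)$ on $L^1(\mathcal M)$ gives an analytic extension $S_z$, and $T_z:=S_{\bar z}^{*}$ (or $S_z^*$ with the appropriate conjugation convention) yields the required extension. Conversely, analyticity of $T_z$ in the weak-$*$ sense transfers to weak analyticity of $S_z$, which is equivalent to norm analyticity.
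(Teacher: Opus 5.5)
Your plan follows the paper's approach. The paper gives no argument beyond saying that the classical proof of \cite[Chapter 1.5]{GJA85} carries over verbatim, and your reduction to the predual $C_0$-semigroup on $L^{1}(\mathcal M)$ is a clean way to make that precise. It tacitly uses that each $T_t$ is normal, which the paper's weak-$*$ framework presumes.

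One step of your direct sketch fails as written. With $\Gamma=\partial\Sigma_\theta$ passing through the origin, the integrand $e^{-z\lambda}R(\lambda,A)$ has size about $K_\theta/|\lambda|$ near $0$, and $0$ may lie in $\sigma(A)$. So the integral does not converge absolutely there. The dilation $\lambda\mapsto\lambda/|z|$ maps $\Gamma$ onto itself and cannot cure this. For example, with $A=0$ the symmetric improper integral equals $\frac{\theta}{\pi}I$ rather than $I$. The standard fix is to replace $\Gamma\cap\{|\lambda|<1/|z|\}$ by the arc $\{|\lambda|=1/|z|,\ |\arg\lambda|\geq\theta\}$ passing to the left of $0$. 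Only after this does rescaling give a contour independent of $|z|$, and hence the uniform bound on subsectors.

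In the other direction, "differentiating at $t=0$ via Cauchy's formula" is not available, because $0$ is the vertex of the sector rather than an interior point. The classical proof instead rotates the ray of integration in the Laplace transform by Cauchy's theorem, which needs only $\sup_{z}\|T_z\|<\infty$ near $0$.
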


Let $H^{\infty}(\Sigma_{\theta})$ denote the space of bounded analytic functions $\varphi: \Sigma_{\theta}\rightarrow\mathbb{C}$. This is a Banach algebra equipped with the norm
$$\|\varphi\|_{\infty,\theta}=\sup\{|\varphi(z)|: z\in\Sigma_{\theta}\}.$$
We then let $H^{\infty}_{0}(\Sigma_{\theta})$ be its subalgebra consisting of all $\varphi\in H^{\infty}(\Sigma_{\theta})$ satisfying
\begin{equation}\label{lklz}
|\varphi(z)|\lesssim\frac{|z|^{s}}{(1+|z|)^{2s}}
\end{equation}
for some positive number $s>0$. Let $\omega<\gamma<\theta<\pi$ and $\Gamma_{\gamma}$ be the boundary of $\Sigma_{\gamma}$ oriented counterclockwise such that
\[
\Gamma_{\gamma}(r)=
\begin{cases}
	-re^{i\gamma}, & r\in\mathbb{R}_{-},\\
	re^{-i\gamma}, & r\in\mathbb{R}_{+}.\\	
\end{cases}
\]
Then by \eqref{lkls} and \eqref{lklz}, for any $\varphi\in H^{\infty}_{0}(\Sigma_{\theta})$, the integral
$$\varphi(A)=\frac{1}{2\pi i}\int_{\Gamma_{\gamma}}\varphi(z)R(z,A)dz$$
is a well defined bounded operator on $D(A)$ and its weak-$*$ extension on $\mathcal M$ remains bounded.

The analyticity of the semigroup will be exploited in the present paper via the following proposition.

\begin{proposition}\label{wss}
Let $(e^{-tA})_{t>0}$ be a bounded weak-$*$ continuous semigroup on $\mathcal{M}$. Then for any $f\in \mathcal{M}$ and all integer $m\geq 1$, $(e^{-tA})_{t>0}$ is analytic if and only if
\begin{align}\label{wss0}\sup_{t>0}\left\| t^{m}\frac{\partial^{m}e^{-tA}f}{\partial t^{m}}\right\|_{\infty}\lesssim_{m}\left\| f\right\|_{\infty}.\end{align}
\end{proposition}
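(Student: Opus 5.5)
The plan is to reduce both directions to the classical case $m=1$, via Lemma \ref{ms} and holomorphic functional calculus.

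\emph{Analytic $\Rightarrow$ \eqref{wss0}.} By Lemma \ref{ms}, $A$ is sectorial of some type $\omega<\pi/2$. For $t>0$ I will write
\[
t^{m}\frac{\partial^{m}e^{-tA}}{\partial t^{m}}=(-1)^{m}(tA)^{m}e^{-tA}=(-1)^m\varphi_m(tA),\qquad \varphi_m(z)=z^{m}e^{-z}.
\]
Choose $\theta\in(\omega,\pi/2)$. Then $\varphi_m\in H^\infty_0(\Sigma_\theta)$: near $0$ it is $O(|z|^m)$, and on $\Sigma_\theta$ it decays exponentially because $\Re z\ge |z|\cos\theta$. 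The Cauchy integral
\[
\varphi_m(tA)=\frac{1}{2\pi i}\int_{\Gamma_\gamma}\varphi_m(z)R(z,tA)\,dz
\]
is therefore well defined. Two facts make it bounded uniformly in $t$:
\begin{itemize}
\item $tA$ is sectorial with the same constant $K_\theta$ as $A$, since $zR(z,tA)=(z/t)R(z/t,A)$;
\item the resulting norm bound is controlled by $K_\theta\int_{\Gamma_\gamma}|\varphi_m(z)|\,|dz|/|z|$, which is finite and independent of $t$.
\end{itemize}
This gives \eqref{wss0}.

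\emph{Consistency check.} We need the functional-calculus operator to coincide with the semigroup derivative. First, $e^{-tA}$ itself is given by the Dunford integral with $e^{-tz}$; this is the standard construction in \cite[Ch.~1.5]{GJA85}. Differentiating $m$ times under the integral sign, which is justified by the exponential decay on $\Gamma_\gamma$, gives the derivative. Identities are checked on weak-$*$ dense $\mathrm{dom}_\infty(A)$ and then extended by weak-$*$ continuity.

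\emph{\eqref{wss0} $\Rightarrow$ analytic.} If \eqref{wss0} holds for $m=1$, then $\|tAe^{-tA}\|\le C$. This is the standard criterion (Pazy, Thm.~2.5.2), and its proof is Banach-space in nature. I will adapt it to the weak-$*$ setting.

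\textbf{Step 1: local extension.} For $t>0$, the derivative bound gives $\|A^ne^{-tA}\|\le (Cn/t)^n$, using $A^ne^{-tA}=(Ae^{-tA/n})^n$ and the bound at time $t/n$. Since $n^n\le e^n n!$, the Taylor series
\[
T_z=\sum_{n\ge 0}\frac{(-(z-t))^n}{n!}A^ne^{-tA}
\]
converges in norm for $|z-t|<t/(eC)$. This defines a holomorphic extension on the sector $\Sigma_\delta$ with $\delta=\arcsin(1/(eC))$ (shrunk slightly if needed).

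\textbf{Step 2: uniform boundedness.} Shrinking $\delta$ makes the series uniformly bounded. Holomorphy of $z\mapsto\langle T_zx,\varphi\rangle$ for $\varphi\in L^1(\mathcal M)$ follows from norm convergence. The semigroup law extends by the identity theorem.

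Note that the hypothesis for general $m$ implies the case $m=1$. One way is to integrate \eqref{wss0}, using the semigroup property together with the Taylor remainder argument. More simply, the proposition may be read as "for some/all $m$", and the $m=1$ case suffices.

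\textbf{Main obstacle.} The delicate step is the weak-$*$ framework, since the semigroup is not strongly continuous on $\mathcal M$. The resolvent and functional-calculus identities must be justified via the predual semigroup on $L^1(\mathcal M)$. That semigroup is strongly continuous and has adjoint generator $A$, so I plan to prove every identity for the preadjoint and then dualize.
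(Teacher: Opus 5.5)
Your proposal is correct and follows essentially the same route as the paper. The forward direction is the holomorphic functional calculus estimate for $z^m e^{-z}$ along $\Gamma_\gamma$; your rescaling to $\varphi_m(tA)$ with a $t$-independent sectoriality constant is equivalent to the paper's explicit computation with $z^m e^{-tz}$ applied to $A$. The converse is precisely the Taylor-series argument of Pazy's Theorem 2.5.2, which the paper only cites. Your extra points fill in details the paper leaves implicit rather than changing the approach: identifying the functional-calculus operator with the semigroup derivative, working through the predual, and reducing a bound for a single $m$ to the case $m=1$. For that last reduction, the local Taylor-remainder argument on $[t,2t]$ is what actually works, not integration out to $\infty$.
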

\begin{proof}
We only establish \eqref{wss0} for bounded analytic semigroups $(e^{-tA})_{t>0}$, as the converse follows from a verbatim argument in \cite[Theorem 2.5.2]{Pazy83}.

Let $(e^{-tA})_{t>0}$ be a bounded analytic semigroup. From Lemma \ref{ms}, one knows that $A$ is a sectorial operator of type $\omega\in(0,\frac{\pi}{2})$. Let $\theta\in(\omega, \frac{\pi}{2})$.
Fix $t>0$ and an integer $m\geq1$. Choosing $\varphi(z)=z^{m}e^{-tz}$, it is easy to check that $\varphi$ is a bounded analytic function which belongs to $H^{\infty}_{0}(\Sigma_{\theta})$. Thus, setting $\omega<\gamma<\theta<\frac{\pi}{2}$, by the holomorphic functional calculus, one gets that the integral
$$\varphi(A)=\frac{1}{2\pi i}\int_{\Gamma_{\gamma}}z^{m}e^{-tz}R(z,A)dz$$
is in the weak-$*$ topology and is a bounded operator on $\mathcal M$, where $\Gamma_{\gamma}$ is the boundary of $\Sigma_{\gamma}$ oriented counterclockwise. Thus
\begin{equation*}
\|\varphi(A)\|_{\infty}=\left\|\frac{1}{2\pi i}\int_{\Gamma_{\gamma}}\varphi(z)R(z,A)dz\right\|_{\infty}
 \leq \frac{1}{2\pi}\int_{\Gamma_{\gamma}}\left\|zR(z,A)\right\|_{\infty}|z^{m}e^{-tz}|\frac{|dz|}{|z|}.
\end{equation*}
Letting $z=re^{\pm i\gamma}\in\Gamma_{\gamma}$, we then have
\begin{equation*}
  \|\varphi(A)\|_{\infty}\leq \frac{2 K_{\theta}}{2\pi}\int_{0}^{\infty}r^{m-1}e^{-tr\cos(\gamma)}dr=\frac{K_{\theta}}{\pi (t\cos(\gamma))^{m}}\int_{0}^{\infty}r^{m-1}e^{-r}dr=\frac{(m-1)!K_{\theta}}{\pi (t\cos(\gamma))^{m}}.
\end{equation*}
This implies that for $f\in \mathcal M$,
$$\|\frac{\partial^{m}e^{-tA}f}{\partial t^{m}}\|_{\infty}=\|A^{m}e^{-tA}f\|_{\infty}=\|\varphi(A)f\|_{\infty}\leq \frac{(m-1)!K_{\theta}}{\pi (t\cos(\gamma))^{m}}\|f\|_{\infty}.$$
Taking the supremum over $t>0$ yields the desired assertion.
The proof is complete.
\end{proof}

\begin{remark}\label{jiajia}
{\rm From the previously mentioned fact that $A^{\frac{\beta}{2}}$ is a sectorial operator of type $\frac{\beta\pi}{2}$ for any $0<\beta<1$, together with Lemma \ref{ms}, the subordinated semigroup $(e^{-tA^{\frac{\beta}{2}}})_{t>0}$ is analytic. Consequently, \eqref{wss0} holds true for any bounded subordinated semigroup. In the previous work \cite[Proposition 2.3]{HJ24}, a strong version of \eqref{wss0} for subordinated Poisson semigroup was exploited.}
\end{remark}

\section{The proof of Theorem \ref{e25}}

In this section, we will prove the isomorphisms between semigroup Campanato spaces and semigroup Lipschitz spaces. Some basic properties, such as the completeness of these spaces, have been verified in the previous work \cite{HJ24}. To simplify notation, we will denote the homogeneous parts of the Campanato and Lipschitz norms respectively by
$$\|f\|_{\dot{\mathcal L}_{\alpha}^{c}(\mathcal T)}=\sup_{t>0}\frac{1}{t^{\alpha}}\left\|T_{t}|(I-T_{t})^{[\alpha]+1}f|^{2}\right\|_{\infty}^{\frac{1}{2}},
$$
and
$$\|f\|_{\dot{\Lambda}_{\alpha}({\mathcal T})}=\sup_{t>0}\frac{1}{t^{\alpha-([ \alpha]+1)}}\left\|\frac{\partial^{[ \alpha]+1}T_{t}f}{\partial t^{[\alpha]+1}}\right\|_{\infty}.$$

The following lemma is one of the novel ingredients allowing to deal with all $\alpha>0$.

\begin{lemma}\label{190}
Let $\mathcal T$ be a 2-positive and contractive analytic semigroup. Then we have for $\alpha>0$ and $f\in \mathcal M$,
  \begin{equation*}
 \left\|f\right\|_{\dot{\Lambda}_{\alpha}(\mathcal T)}\lesssim_{\alpha}\sup_{t>0}\frac{1}{t^{\alpha}}\left\|T_{\frac{t}{2}}\left|\big(I-T_{2^{[\frac{[ \alpha]+1}{[ \alpha]+1-\alpha}]+1}t}\big)^{[ \alpha]+1}f\right|^{2}\right\|_{\infty}^{\frac{1}{2}}.
\end{equation*}
\end{lemma}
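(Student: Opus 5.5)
Throughout, set $m=[\alpha]+1$ and $N=2^{[\frac{m}{m-\alpha}]+1}$, and let $C_f$ denote the supremum on the right-hand side of Lemma \ref{190}. Since $\partial_t^m T_t f=(-A)^mT_tf$, the goal is to bound $M(t):=t^{m-\alpha}\|A^mT_tf\|_\infty$ uniformly in $t>0$ by a multiple of $C_f$. The plan is to produce a recursive inequality that compares $M(t)$ with the values $M(s)$ at larger times $s=(1+kN)t$. This becomes a contraction because $N$ is chosen large enough that $N^{m-\alpha}>2^m$.

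\textbf{Step 1: Kadison--Schwarz.} By \eqref{1} applied to $T_{t/2}$, which is 2-positive and contractive,
\[
\big|T_{t/2}(I-T_{Nt})^mf\big|^2\le T_{t/2}\big|(I-T_{Nt})^mf\big|^2 .
\]
Hence $\|T_{t/2}(I-T_{Nt})^mf\|_\infty\le t^\alpha C_f$ for every $t>0$.

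\textbf{Step 2: Binomial splitting.} Expanding $(I-T_{Nt})^m$ gives
\[
f=(I-T_{Nt})^mf-\sum_{k=1}^m\binom mk(-1)^kT_{kNt}f .
\]
Apply $A^mT_t=A^mT_{t/2}T_{t/2}$ to this identity to obtain
\[
A^mT_tf=A^mT_{t/2}\big[T_{t/2}(I-T_{Nt})^mf\big]-\sum_{k=1}^m\binom mk(-1)^kA^mT_{(1+kN)t}f .
\]
For the first term, Proposition \ref{wss} at time $t/2$ together with Step 1 gives the bound $\lesssim_m t^{-m}t^{\alpha}C_f$. For the remaining terms, note that $\|A^mT_{(1+kN)t}f\|_\infty=((1+kN)t)^{-(m-\alpha)}M((1+kN)t)$. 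Multiplying through by $t^{m-\alpha}$ therefore yields
\[
M(t)\le c_\alpha C_f+\sum_{k=1}^m\binom mk(1+kN)^{-(m-\alpha)}M((1+kN)t)\le c_\alpha C_f+q\sup_{s\ge t}M(s),
\]
where $q=(2^m-1)N^{-(m-\alpha)}$. Since $N>2^{m/(m-\alpha)}$, we have $N^{m-\alpha}>2^m$, so $q<1$. This is exactly where the specific choice of exponent in the statement enters.

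\textbf{Step 3: Absorption.} This is the step I expect to be the main obstacle, because we do not know a priori that $\sup_{t>0}M(t)<\infty$. Proposition \ref{wss} only gives $M(t)\lesssim t^{-\alpha}\|f\|_\infty$, which may blow up as $t\to0$. To get around this, fix $\varepsilon>0$ and set $M_\varepsilon=\sup_{t\ge\varepsilon}M(t)$, which is finite by the same estimate. The recursion in Step 2 at a point $t\ge\varepsilon$ only involves the values $M(s)$ with $s\ge t\ge\varepsilon$. Taking the supremum over $t\ge\varepsilon$ therefore gives $M_\varepsilon\le c_\alpha C_f+qM_\varepsilon$, that is, $M_\varepsilon\le c_\alpha C_f/(1-q)$. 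The constant here is independent of $\varepsilon$. Letting $\varepsilon\to0$ yields $\|f\|_{\dot\Lambda_\alpha(\mathcal T)}\lesssim_\alpha C_f$, which is the claimed inequality.
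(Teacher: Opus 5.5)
Your proof is correct and is essentially the paper's argument. It uses the same splitting $I=(I-T_{Nt})^{m}+\sum_{r=1}^{m}(-1)^{r+1}C_{m}^{r}T_{rNt}$, the same absorption constant (controlled by $N^{m-\alpha}>2^{m}$), and the same use of Proposition~\ref{wss} at time $t/2$ together with the Kadison--Schwarz inequality \eqref{1}. The only difference in ordering is that the paper first absorbs for the intermediate quantity $t^{m-\alpha}\|\partial_t^{m}T_t(I-T_{Nt})^{m}f\|_\infty$ and applies Proposition~\ref{wss} afterwards.

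Your Step~3 improves on the paper. The paper absorbs $\|f\|_{\dot{\Lambda}_{\alpha}(\mathcal T)}$ without checking that it is finite, and this is not automatic for $f\in\mathcal M$, since Proposition~\ref{wss} only gives the bound $t^{-\alpha}\|f\|_\infty$ near $t=0$. Your truncation $M_\varepsilon=\sup_{t\ge\varepsilon}M(t)$ closes this gap, and it works precisely because the recursion only involves larger times.
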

\begin{proof}
Fix $\alpha>0$. We first claim that
\begin{equation}\label{ja}
 \left\|f\right\|_{\dot{\Lambda}_{\alpha}(\mathcal T)}\leq \frac{1}{1-\sum_{r=1}^{[ \alpha]+1}\frac{C_{[ \alpha]+1}^{r}}{(2^{[\frac{[ \alpha]+1}{[ \alpha]+1-\alpha}]+1}r+1)^{[ \alpha]+1-\alpha}}}\sup_{t>0}\frac{1}{t^{\alpha-([ \alpha]+1)}}\left\|\frac{\partial^{[ \alpha]+1} T_{t}}{\partial t^{[ \alpha]+1}}
\big(I-T_{2^{[\frac{[ \alpha]+1}{[ \alpha]+1-\alpha}]+1}t}\big)^{[ \alpha]+1}f\right\|_{\infty}.
\end{equation}
Once this is done, by Proposition \ref{wss} and the Kadison-Schwarz inequality \eqref{1}, one has for $t>0$ fixed,
\begin{equation*}
\begin{aligned}
 &\frac{1}{t^{\alpha-([ \alpha]+1)}}\left\|\frac{\partial^{[ \alpha]+1} T_{t}}{\partial t^{[ \alpha]+1}}
(I-T_{2^{[\frac{[ \alpha]+1}{[ \alpha]+1-\alpha}]+1}t})^{[ \alpha]+1}f\right\|_{\infty}
\\=&\frac{2^{[ \alpha]+1}}{t^{\alpha}}\left\|\Big(s^{[ \alpha]+1}\frac{\partial^{[ \alpha]+1} T_{s}}{\partial s^{[ \alpha]+1}}\Big)\Big|_{s=\frac t2}\left(T_{\frac{t}{2}}(I-T_{2^{[\frac{[ \alpha]+1}{[ \alpha]+1-\alpha}]+1}t})^{[\alpha]+1}f\right)\right\|_{\infty}
\\ \lesssim&\frac{2^{[ \alpha]+1}}{t^{\alpha}}\left\|T_{\frac{t}{2}}(I-T_{2^{[\frac{[ \alpha]+1}{[ \alpha]+1-\alpha}]+1}t})^{[\alpha]+1}f\right\|_{\infty}
\\=& \frac{2^{[ \alpha]+1}}{t^{\alpha}}\left\|\left|T_{\frac{t}{2}}\big(I-T_{2^{[\frac{[ \alpha]+1}{[ \alpha]+1-\alpha}]+1}t}\big)^{[\alpha]+1}f\right|^{2}\right\|_{\infty}^{\frac{1}{2}}
\\ \leq& \frac{2^{[ \alpha]+1}}{t^{\alpha}}\left\|T_{\frac{t}{2}}\left|\big(I-T_{2^{[\frac{[ \alpha]+1}{[ \alpha]+1-\alpha}]+1}t}\big)^{[\alpha]+1}f\right|^{2}\right\|_{\infty}^{\frac{1}{2}}.
\end{aligned}
\end{equation*}
Then taking supremum over $t>0$ on both sides of the above inequality,  \eqref{ja} yields the desired assertion.

Now we are left to deal with \eqref{ja}.
  Note that
\begin{equation*}
\begin{aligned}
  I=&(I-T_{2^{[\frac{[ \alpha]+1}{[ \alpha]+1-\alpha}]+1}t})^{[ \alpha]+1}+\Big[I-(I-T_{2^{[\frac{[ \alpha]+1}{[ \alpha]+1-\alpha}]+1}t})^{[ \alpha]+1}\Big]
\\=&(I-T_{2^{[\frac{[ \alpha]+1}{[ \alpha]+1-\alpha}]+1}t})^{[ \alpha]+1}+\sum_{r=1}^{[ \alpha]+1}(-1)^{r+1}C_{[ \alpha]+1}^{r}T_{2^{[\frac{[ \alpha]+1}{[ \alpha]+1-\alpha}]+1}rt}.
\end{aligned}
\end{equation*}
Applying this identity, one gets for a fixed $t>0$,
\begin{equation}\label{cccc}
\begin{aligned}
  &\frac{1}{t^{\alpha-([ \alpha]+1)}}\left\|\frac{\partial^{[ \alpha]+1} T_{t}f}{\partial t^{[ \alpha]+1}}\right\|_{\infty}
\\=&\frac{1}{t^{\alpha-([ \alpha]+1)}}\left\|\frac{\partial^{[ \alpha]+1} T_{t}}{\partial t^{[ \alpha]+1}}
\left[\big(I-T_{2^{[\frac{[ \alpha]+1}{[ \alpha]+1-\alpha}]+1}t}\big)^{[ \alpha]+1}+\sum_{r=1}^{[ \alpha]+1}(-1)^{r+1}C_{[ \alpha]+1}^{r}T_{2^{[\frac{[ \alpha]+1}{[ \alpha]+1-\alpha}]+1}rt}\right]f\right\|_{\infty}
\\ \leq&\frac{1}{t^{\alpha-([ \alpha]+1)}}\left\|\frac{\partial^{[ \alpha]+1} T_{t}}{\partial t^{[ \alpha]+1}}
\big(I-T_{2^{[\frac{[ \alpha]+1}{[ \alpha]+1-\alpha}]+1}t}\big)^{[ \alpha]+1}f\right\|_{\infty}
+\sum_{r=1}^{[ \alpha]+1}C_{[ \alpha]+1}^{r}\frac{1}{t^{\alpha-([ \alpha]+1)}}\left\|\frac{\partial^{[ \alpha]+1} T_{t}}{\partial t^{[ \alpha]+1}}T_{2^{[\frac{[ \alpha]+1}{[ \alpha]+1-\alpha}]+1}rt}f\right\|_{\infty}
\\ =&\frac{1}{t^{\alpha-([ \alpha]+1)}}\left\|\frac{\partial^{[ \alpha]+1} T_{t}}{\partial t^{[ \alpha]+1}}
\big(I-T_{2^{[\frac{[ \alpha]+1}{[ \alpha]+1-\alpha}]+1}t}\big)^{[ \alpha]+1}f\right\|_{\infty}
\\&+\sum_{r=1}^{[ \alpha]+1}\frac{C_{[ \alpha]+1}^{r}}{\big(2^{[\frac{[ \alpha]+1}{[ \alpha]+1-\alpha}]+1}r+1\big)^{[ \alpha]+1-\alpha}}\frac{1}{\left((2^{[\frac{[ \alpha]+1}{[ \alpha]+1-\alpha}]+1}r+1)t\right)^{\alpha-([ \alpha]+1)}}\left\|\left.\frac{\partial^{[ \alpha]+1} T_{s}f}{\partial s^{[\alpha]+1}}\right|_{s=(2^{[\frac{[ \alpha]+1}{[ \alpha]+1-\alpha}]+1}r+1)t}\right\|_{\infty}
\\ \leq&\sup_{t>0}\frac{1}{t^{\alpha-([ \alpha]+1)}}\left\|\frac{\partial^{[ \alpha]+1} T_{t}}{\partial t^{[ \alpha]+1}}
\big(I-T_{2^{[\frac{[ \alpha]+1}{[ \alpha]+1-\alpha}]+1}t}\big)^{[ \alpha]+1}f\right\|_{\infty}
+\sum_{r=1}^{[ \alpha]+1}\frac{C_{[ \alpha]+1}^{r}}{(2^{[\frac{[ \alpha]+1}{[ \alpha]+1-\alpha}]+1}r+1)^{[ \alpha]+1-\alpha}}\left\|f\right\|_{\Lambda_{\alpha}(\mathcal T)}.
\end{aligned}
\end{equation}
Note that
 $$\sum_{r=1}^{[ \alpha]+1}C_{[ \alpha]+1}^{r}=\sum_{r=0}^{[ \alpha]+1}C_{[ \alpha]+1}^{r}-C_{[ \alpha]+1}^{0}=2^{[ \alpha]+1}-1$$ and for $r\geq1$,
\begin{equation*}
\begin{aligned}
\frac{1}{(2^{[\frac{[ \alpha]+1}{[ \alpha]+1-\alpha}]+1}r+1)^{[ \alpha]+1-\alpha}}
<\frac{1}{(2^{\frac{[ \alpha]+1}{[ \alpha]+1-\alpha}}r)^{[ \alpha]+1-\alpha}} =\frac{1}{2^{[ \alpha]+1}r^{[ \alpha]+1-\alpha}}\leq\frac{1}{2^{[ \alpha]+1}}.
\end{aligned}
\end{equation*}
Thus it is easy to see
\begin{equation}\label{la}
 \sum_{r=1}^{[ \alpha]+1}\frac{C_{[ \alpha]+1}^{r}}{(2^{[\frac{[ \alpha]+1}{[ \alpha]+1-\alpha}]+1}r+1)^{[ \alpha]+1-\alpha}}
 <\sum_{r=1}^{[ \alpha]+1}\frac{C_{[ \alpha]+1}^{r}}{2^{[ \alpha]+1}}=\frac{2^{[ \alpha]+1}-1}{2^{[ \alpha]+1}}<1.
\end{equation}
 Then taking the supremum over $t>0$ on the left-hand side of \eqref{cccc} and using \eqref{la}, one obtains
\begin{equation*}
\begin{aligned}
 \left\|f\right\|_{\dot{\Lambda}_{\alpha}(\mathcal T)}\leq \frac{1}{1-\sum_{r=1}^{[ \alpha]+1}\frac{C_{[ \alpha]+1}^{r}}{(2^{[\frac{[ \alpha]+1}{[ \alpha]+1-\alpha}]+1}r+1)^{[ \alpha]+1-\alpha}}}\sup_{t>0}\frac{1}{t^{\alpha-([ \alpha]+1)}}\left\|\frac{\partial^{[ \alpha]+1} T_{t}}{\partial t^{[ \alpha]+1}}
\big(I-T_{2^{[\frac{[ \alpha]+1}{[ \alpha]+1-\alpha}]+1}t}\big)^{[ \alpha]+1}f\right\|_{\infty}.
\end{aligned}
\end{equation*}
\end{proof}

\begin{remark}\label{lip1}
{\rm From the above arguments, in particular by noticing \eqref{la}, one may see that the index $2^{[\frac{[ \alpha]+1}{[ \alpha]+1-\alpha}]+1}$ in Lemma \ref{190} can be replaced by any integer $b$ such that
$$\sum_{r=1}^{[ \alpha]+1}\frac{C_{[ \alpha]+1}^{r}}{(br+1)^{[ \alpha]+1-\alpha}}<1.$$}
\end{remark}

The following identity is the second ingredient to help concluding the desired result for all $\alpha>0$.

\begin{lemma}\label{531}
For every $t>0$ and any integers $M,Q\geq1$, we have
$$(I-T_{Qt})^{M}=(I-T_{\frac{t}{2}})^{M}\left(I+\overline{\sum}
T_{\sum_{i=1}^{2Q-1}\frac{r_{i}t}{2}}\right),$$
where $\overline{\sum}$ is the weighted multiple summation
 $$\overline{\sum}=\sum_{r_{1}=1}^{M}\sum_{r_{2}=0}^{r_{1}}
 \cdots\sum_{r_{2Q-1}=0}^{r_{2Q-2}}C_{M}^{r_{1}}C_{r_{1}}^{r_{2}}\cdots C_{r_{2Q-1}}^{r_{2Q-2}}$$ with $0\leq r_{2Q-1} \leq r_{2Q-2}\cdots\leq r_{2}\leq r_{1}\leq M$ and $C^{r}_{M}=\frac{M!}{r!(M-r)!}$.
\end{lemma}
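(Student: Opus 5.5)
The identity is purely algebraic: every operator in it is a polynomial in the single operator $x:=T_{\frac t2}$, since the semigroup law gives $T_{Qt}=x^{2Q}$ and $T_{\sum_i r_it/2}=x^{\sum_i r_i}$. So I will prove it as an identity in the commutative polynomial ring $\mathbb{Z}[x]$ and then substitute $x=T_{\frac t2}$. No positivity, contractivity or analyticity is needed.

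The first step is the telescoping factorization
$$I-x^{2Q}=(I-x)\big(I+x+x^{2}+\cdots+x^{2Q-1}\big).$$
Raising to the power $M$, which is legitimate because everything commutes, gives
$$(I-T_{Qt})^M=(I-T_{\frac t2})^M\,S^M,\qquad S:=I+x+\cdots+x^{2Q-1}.$$
It then remains to show that $S^M=I+\overline{\sum}\,x^{r_1+\cdots+r_{2Q-1}}$.

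For this I write $S$ in nested Horner form. Set $y_{2Q-1}:=I$ and $y_{j}:=I+x\,y_{j+1}$ for $j=2Q-2,\dots,1,0$. Then $y_{2Q-2}=I+x$, and in general $y_0=I+x+\cdots+x^{2Q-1}=S$. By the binomial theorem, for any integer $n\ge 0$,
$$y_{j-1}^{\,n}=(I+x\,y_{j})^{n}=\sum_{r=0}^{n}C_n^{r}\,x^{r}\,y_{j}^{\,r}.$$
I apply this successively:
\begin{itemize}
\item first with $n=M$ and $j=1$, producing the index $r_1$;
\item then to $y_1^{r_1}$ with $j=2$, producing $r_2\in\{0,\dots,r_1\}$;
\item and so on, until $y_{2Q-2}^{\,r_{2Q-2}}=(I+x)^{r_{2Q-2}}=\sum_{r_{2Q-1}=0}^{r_{2Q-2}}C_{r_{2Q-2}}^{r_{2Q-1}}x^{r_{2Q-1}}$.
\end{itemize}
This gives
$$S^M=\sum_{r_1=0}^{M}\sum_{r_2=0}^{r_1}\cdots\sum_{r_{2Q-1}=0}^{r_{2Q-2}}C_M^{r_1}C_{r_1}^{r_2}\cdots C_{r_{2Q-2}}^{r_{2Q-1}}\,x^{r_1+\cdots+r_{2Q-1}}.$$
I will organize this cleanly as an induction on the number of nesting levels. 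The inductive claim is that, for each $n$,
$$y_{2Q-1-k}^{\,n}=\sum_{0\le r_k\le\cdots\le r_1\le n}\ \prod_{i}C\cdots\ x^{r_1+\cdots+r_k},$$
with the product of binomial coefficients chained as above. The case $k=1$ is just the binomial theorem.

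Finally, I separate the term $r_1=0$. It forces $r_2=\cdots=r_{2Q-1}=0$, and its coefficient is $1$, so it contributes exactly $I$. The remaining terms form the weighted sum $\overline{\sum}$ with $r_1$ running from $1$ to $M$, as in the statement. Here the last binomial factor is to be read as $C_{r_{2Q-2}}^{r_{2Q-1}}$; the displayed $C_{r_{2Q-1}}^{r_{2Q-2}}$ appears to be an index slip.

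There is no real obstacle in this argument. The only care needed is the index bookkeeping, namely that exactly $2Q-1$ nesting levels arise from a geometric sum of $2Q$ terms, and that the ordering constraints $0\le r_{2Q-1}\le\cdots\le r_1\le M$ come out as stated.
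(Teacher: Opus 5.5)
Your proof is correct and is essentially the paper's argument: the telescoping factorization $(I-T_{Qt})^M=(I-T_{\frac t2})^M\big(I+T_{\frac t2}+\cdots+T_{(Q-\frac12)t}\big)^M$ followed by $2Q-1$ nested binomial expansions. Your Horner-form bookkeeping only changes how the exponent $r_1+\cdots+r_{2Q-1}$ is obtained (directly, rather than by adding up the paper's $\frac{(r_1-r_2)t}{2}+(r_2-r_3)t+\cdots$), and you are right that the last coefficient should read $C_{r_{2Q-2}}^{r_{2Q-1}}$.
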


\begin{proof}
Since the identity $(T_{s})^{a}=T_{as}$ holds trivially for any $s>0$ and positive integer $a$ by using the semigroup property, for fixed $t>0$ we deduce
\begin{equation*}
\begin{aligned}
  (I-T_{Qt})^{M}=&(I-T_{\frac{t}{2}}+T_{\frac{t}{2}}-T_{t}+\cdots+T_{(Q-\frac{1}{2})t}-T_{Qt})^{M}
\\=&(I-T_{\frac{t}{2}})^{M}\big(I+T_{\frac{t}{2}}+T_{t}+\cdots+T_{(Q-\frac{1}{2})t}\big)^{M}
\\=&(I-T_{\frac{t}{2}})^{M}(I+\sum_{r_{1}=1}^{M}C_{M}^{r_{1}}(T_{\frac{t}{2}}+T_{t}+\cdots+T_{(Q-\frac{1}{2})t})^{r_{1}})
\\=&(I-T_{\frac{t}{2}})^{M}\big(I+\sum_{r_{1}=1}^{M}C_{M}^{r_{1}}\sum_{r_{2}=0}^{r_{1}}C_{r_{1}}^{r_{2}}T_{\frac{(r_{1}-r_{2})t}{2}}(T_{t}+\cdots+T_{(Q-\frac{1}{2})t})^{r_{2}}\big)
\\=&(I-T_{\frac{t}{2}})^{M}\left(I+\sum_{r_{1}=1}^{M}C_{M}^{r_{1}}\sum_{r_{2}=0}^{r_{1}}C_{r_{1}}^{r_{2}}
 T_{\frac{(r_{1}-r_{2})t}{2}}\sum_{r_{3}=0}^{r_{2}}C_{r_{2}}^{r_{3}}
 T_{(r_{2}-r_{3})t}\right.
 \\&\left.\cdots\sum_{r_{2Q-1}=0}^{r_{2Q-2}}C_{r_{2Q-1}}^{r_{2Q-2}}T_{(Q-1)(r_{2Q-2}-r_{2Q-1})t}T_{(Q-\frac{1}{2})r_{2Q-1}t}\right)
\\=&(I-T_{\frac{t}{2}})^{M}\left(I+\sum_{r_{1}=1}^{M}\sum_{r_{2}=0}^{r_{1}}
 \cdots\sum_{r_{2Q-1}=0}^{r_{2Q-2}}C_{M}^{r_{1}}C_{r_{1}}^{r_{2}}\cdots C_{r_{2Q-1}}^{r_{2Q-2}}
T_{\sum_{i=1}^{2Q-1}\frac{r_{i}t}{2}}\right).
\end{aligned}
\end{equation*}
Therefore, we obtain the desired result by denoting $$\overline{\sum}=\sum_{r_{1}=1}^{M}\sum_{r_{2}=0}^{r_{1}}
 \cdots\sum_{r_{2Q-1}=0}^{r_{2Q-2}}C_{M}^{r_{1}}C_{r_{1}}^{r_{2}}\cdots C_{r_{2Q-1}}^{r_{2Q-2}}.$$
\end{proof}

We now come to prove Theorem \ref{e25}.

\textbf{The proof of Theorem \ref{e25}.}
Let $\alpha>0$ and $f\in \mathcal M$. By definitions, it suffices to show  $$\left\|f\right\|_{\dot{\mathcal L}_{\alpha}^{c}(\mathcal T)}\simeq_{\alpha}\left\|f\right\|_{\dot{\Lambda}_{\alpha}(\mathcal T)}.$$
We first show
\begin{equation}\label{e1}
 \left\|f\right\|_{\dot{\mathcal L}_{\alpha}^{c}(\mathcal T)}\lesssim_{\alpha}\left\|f\right\|_{\dot{\Lambda}_{\alpha}(\mathcal T)}.
\end{equation}

Fix $t>0$. Using \cite[Lemma 2.1]{FHW} and Applying Lemma \ref{531} to $M=[\alpha]+1$ and $Q=1$, we obtain
\begin{equation}\label{150}
\begin{aligned}
 \frac{1}{t^{\alpha}}\left\|T_{t}\left|(I-T_{t})^{[\alpha]+1}f\right|^{2}\right\|_{\infty}^{\frac{1}{2}}= & \frac{1}{t^{\alpha}}\left\|T_{t}\left|(I-T_{\frac{t}{2}})^{[\alpha]+1}(I+\sum_{N=1}^{[\alpha]+1}C_{[\alpha]+1}^{N}T_{\frac{Nt}{2}})f\right|^{2}\right\|_{\infty}^{\frac{1}{2}}
\\ \leq&\frac{1}{t^{\alpha}}\left\|T_{t}|(I-T_{\frac{t}{2}})^{[\alpha]+1}f|^{2}\right\|_{\infty}^{\frac{1}{2}}
\\  & +\frac{1}{t^{\alpha}}\left\|T_{t}\left|\sum_{N=1}^{[\alpha]+1}C_{[\alpha]+1}^{N}T_{\frac{Nt}{2}}(I-T_{\frac{t}{2}})^{[\alpha]+1}f\right|^{2}\right\|_{\infty}^{\frac{1}{2}}
\\ =&:A+B.
\end{aligned}
\end{equation}
By the contraction of $T_t$, one gets
\begin{equation*}
\begin{aligned}
 A\leq&\sup_{t>0}\frac{1}{t^{\alpha}}\left\|T_{t}|(I-T_{\frac{t}{2}})^{[\alpha]+1}f|^{2}\right\|_{\infty}^{\frac{1}{2}}
 \\=&\sup_{t>0}\frac{1}{(2t)^{\alpha}}\left\|T_{2t}|(I-T_{t})^{[\alpha]+1}f|^{2}\right\|_{\infty}^{\frac{1}{2}}
\\ \leq&\sup_{t>0}\frac{1}{(2t)^{\alpha}}\left\|T_{t}|(I-T_{t})^{[\alpha]+1}f|^{2}\right\|_{\infty}^{\frac{1}{2}}
\\=&\frac{1}{2^{\alpha}}\left\|f\right\|_{\dot{\mathcal L}_{\alpha}^{c}(\mathcal T)}.
\end{aligned}
\end{equation*}
For the term $B$, one has
\begin{equation*}
\begin{aligned}
  B\leq & \frac{1}{t^{\alpha}}\left\|\sum_{N=1}^{[\alpha]+1}C_{[\alpha]+1}^{N}T_{\frac{Nt}{2}}(I-T_{\frac{t}{2}})^{[\alpha]+1}f\right\|_{\infty}
\\ \leq&\sum_{N=1}^{[\alpha]+1}C_{[\alpha]+1}^{N}\frac{1}{t^{\alpha}}\left\|T_{\frac{Nt}{2}}(I-T_{\frac{t}{2}})^{[\alpha]+1}f\right\|_{\infty}
\\=&\sum_{N=1}^{[\alpha]+1}C_{[\alpha]+1}^{N}\frac{1}{t^{\alpha}}\left\|\int_{0}^{\frac{t}{2}}\frac{\partial T_{s_{1}}}{\partial s_{1}}T_{\frac{Nt}{2}}(I-T_{\frac{t}{2}})^{[\alpha]}fds_{1}\right\|_{\infty}
\\=&\sum_{N=1}^{[\alpha]+1}C_{[\alpha]+1}^{N}\frac{1}{t^{\alpha}}\left\|\int_{0}^{\frac{t}{2}}\frac{\partial T_{s_{1}}}{\partial s_{1}}\cdots\int_{0}^{\frac{t}{2}}\frac{\partial T_{s_{[\alpha]}}}{\partial s_{[\alpha]}} \int_{\frac{Nt}{2}}^{\frac{(N+1)t}{2}}\frac{\partial T_{s_{[\alpha]+1}}f}{\partial s_{[\alpha]+1}}ds_{[\alpha]+1}ds_{[\alpha]}\cdots ds_{1}\right\|_{\infty}.
\end{aligned}
\end{equation*}
Note that
 $$\frac{\partial T_{s}}{\partial s}\frac{\partial T_{t}}{\partial t}=\frac{\partial^{2} T_{t+s}}{\partial s^{2}}=\frac{\partial^{2} T_{t+s}}{\partial t^{2}}.$$
Applying this property and letting $v=s_{1}+\cdots+s_{[\alpha]+1}$, we have
\begin{equation*}
\begin{aligned}
  B\leq&\sum_{N=1}^{[\alpha]+1}C_{[\alpha]+1}^{N}\frac{1}{t^{\alpha}}\left\|\int_{0}^{\frac{t}{2}}\cdots\int_{0}^{\frac{t}{2}}\int_{\frac{Nt}{2}}^{\frac{(N+1)t}{2}}\frac{\partial^{[\alpha]+1} T_{s_{1}+s_{2}+\cdots s_{[\alpha]+1}}f}{\partial s_{[\alpha]+1}^{[\alpha]+1}}ds_{[\alpha]+1}ds_{[\alpha]}\cdots ds_{1}\right\|_{\infty}
   \\  \nonumber=&\sum_{N=1}^{[\alpha]+1}C_{[\alpha]+1}^{N}\frac{1}{t^{\alpha}}\left\|\int_{0}^{\frac{t}{2}}\cdots\int_{0}^{\frac{t}{2}} \int_{\frac{Nt}{2}+s_{1}+\cdots+s_{[\alpha]}}^{\frac{(N+1)t}{2}+s_{1}+\cdots+s_{[\alpha]}}\frac{\partial^{[\alpha]+1} T_{v}f}{\partial v^{[\alpha]+1}}dvds_{[\alpha]}\cdots ds_{1}\right\|_{\infty}
   \\ \nonumber\leq&\sum_{N=1}^{[\alpha]+1}C_{[\alpha]+1}^{N}\frac{1}{t^{\alpha}}\int_{0}^{\frac{t}{2}}\cdots\int_{0}^{\frac{t}{2}} \int_{\frac{Nt}{2}+s_{1}+\cdots+s_{[\alpha]}}^{\frac{(N+1)t}{2}+s_{1}+\cdots+s_{[\alpha]}}\frac{v^{\alpha-([\alpha]+1)}}{v^{\alpha-([\alpha]+1)}}\left\|\frac{\partial^{[\alpha]+1} T_{v}f}{\partial v^{[\alpha]+1}}\right\|_{\infty}dvds_{[\alpha]}\cdots ds_{1}
   \\ \nonumber \leq&\left\|f\right\|_{\dot{\Lambda}_{\alpha}(\mathcal T)}\sum_{N=1}^{[\alpha]+1}C_{[\alpha]+1}^{N}\frac{1}{t^{\alpha}}\int_{0}^{\frac{t}{2}}\cdots\int_{0}^{\frac{t}{2}} \int_{\frac{Nt}{2}+s_{1}+\cdots+s_{[\alpha]}}^{\frac{(N+1)t}{2}+s_{1}+\cdots+s_{[\alpha]}}{v^{\alpha-([\alpha]+1)}}dvds_{[\alpha]}\cdots ds_{1}.
\end{aligned}
\end{equation*}
If $\alpha$ is not an integer, one gets
\begin{equation}\label{bhi}
\begin{aligned}
  &\int_{0}^{\frac{t}{2}}\cdots\int_{0}^{\frac{t}{2}} \int_{\frac{Nt}{2}+s_{1}+\cdots+s_{[\alpha]}}^{\frac{(N+1)t}{2}+s_{1}+\cdots+s_{[\alpha]}}{v^{\alpha-([\alpha]+1)}}dvds_{[\alpha]}\cdots ds_{1}
\\ \leq&\int_{0}^{\frac{t}{2}}\cdots\int_{0}^{\frac{t}{2}} \frac{(\frac{(N+1)t}{2}+s_{1}+\cdots+s_{[\alpha]})^{\alpha-[\alpha]}}{\alpha-[\alpha]}ds_{[\alpha]}\cdots ds_{1}
\\ \leq&\frac{\big([\alpha]+N+1\big)^{\alpha-[\alpha]}t^{\alpha}}{2^{\alpha}(\alpha-[\alpha])}.
\end{aligned}
\end{equation}
If $\alpha$ is an integer, we have
\begin{equation}\label{162}
\begin{aligned}
 &\int_{0}^{\frac{t}{2}}\cdots\int_{0}^{\frac{t}{2}} \int_{\frac{Nt}{2}+s_{1}+\cdots+s_{[\alpha]}}^{\frac{(N+1)t}{2}+s_{1}+\cdots+s_{[\alpha]}}{v^{\alpha-([\alpha]+1)}}dvds_{[\alpha]}\cdots ds_{1}
  \\ =&\int_{0}^{\frac{t}{2}}\cdots\int_{0}^{\frac{t}{2}}  \int_{\frac{Nt}{2}+s_{1}+\cdots+s_{[\alpha]}}^{\frac{(N+1)t}{2}+s_{1}+\cdots+s_{[\alpha]}}{v^{-1}}dvds_{\alpha}\cdots ds_{1}
  \\ \leq &\int_{0}^{\frac{t}{2}}\cdots\int_{0}^{\frac{t}{2}} \frac{\frac{t}{2}} {\frac{Nt}{2}+s_{1}+\cdots+s_{\alpha}}ds_{\alpha}\cdots ds_{1} \leq \frac{t^{\alpha}}{2^{\alpha}N}.
\end{aligned}
\end{equation}
Thus, by \eqref{bhi} and \eqref{162}, we obtain
\begin{equation*}
  B\leq C_\alpha\left\|f\right\|_{\dot{\Lambda}_{\alpha}(\mathcal T)}
\end{equation*}
for some constant $C_\alpha>0$. Taking the supremum over $t>0$ on the left-hand side of \eqref{150} and combining the estimates of the terms $A$ and $B$, we have
$$\|f\|_{\dot{\mathcal L}_{\alpha}^{c}(\mathcal T)}\leq \frac{1}{2^\alpha}\|f\|_{\dot{\mathcal L}_{\alpha}^{c}(\mathcal T)}+C_\alpha\|f\|_{\dot{\Lambda}_{\alpha}(\mathcal T)}.$$
 Then by the fact that $\frac{1}{2^{\alpha}}<1$, one gets
$$\left\|f\right\|_{\dot{\mathcal L}_{\alpha}^{c}(\mathcal T)}\lesssim_{\alpha}\left\|f\right\|_{\dot{\Lambda}_{\alpha}(\mathcal T)}.$$

Now we prove turn to the converse inequality of \eqref{e1},
\begin{equation*}
  \left\|f\right\|_{\dot{\Lambda}_{\alpha}(\mathcal T)}\lesssim_{\alpha} \left\|f\right\|_{\dot{\mathcal L}_{\alpha}^{c}(\mathcal T)}.
\end{equation*}
By Lemma \ref{190}, it suffices to show
\begin{equation}\label{e2}
\sup_{t>0}\frac{1}{t^{\alpha}}\left\|T_{\frac{t}{2}}\left|
(I-T_{bt})^{[\alpha]+1}f\right|^{2}\right\|_{\infty}^{\frac{1}{2}}\lesssim \left\|f\right\|_{\dot{\mathcal L}_{\alpha}^{c}(\mathcal T)},
\end{equation}
where $b=2^{[\frac{[ \alpha]+1}{[ \alpha]+1-\alpha}]+1}$. Let $t>0$. Applying Lemma \ref{531} to $M=[ \alpha]+1$ and $Q=b$ and by the Kadison-Schwarz inequality \eqref{1}, one gets
\begin{equation*}
\begin{aligned}
\frac{1}{t^{\alpha}}\left\|T_{\frac{t}{2}}\left|(I-T_{bt})^{[\alpha]+1}f\right|^{2}\right\|_{\infty}^{\frac{1}{2}}
 \leq&\frac{1}{t^{\alpha}}\left\| T_{\frac{t}{2}}\left|
(I-T_{t})^{[\alpha]+1}(I+\overline{\sum}
T_{\sum_{i=1}^{2b-1}\frac{r_{i}t}{2}})f\right|^{2}\right\|_{\infty}^{\frac{1}{2}}
\\ \leq&\frac{1}{t^{\alpha}}\left\| T_{\frac{t}{2}}\left|
(I-T_{\frac{t}{2}})^{[\alpha]+1}f\right|^{2}\right\|_{\infty}^{\frac{1}{2}}
+\frac{1}{t^{\alpha}}\left\| T_{\frac{t}{2}}\left|
(I-T_{\frac{t}{2}})^{[\alpha]+1}\overline{\sum}
T_{\sum_{i=1}^{2b-1}\frac{r_{i}t}{2}}f\right|^{2}\right\|_{\infty}^{\frac{1}{2}}
\\ \leq& \frac{1}{2^{\alpha}}\left\|f\right\|_{\dot{\mathcal L}_{\alpha}^{c}(\mathcal T)}+\overline{\sum}\frac{1}{t^{\alpha}}\left\| T_{\sum_{i=1}^{2b-1}\frac{r_{i}t}{2}+\frac t2}\left|
(I-T_{\frac{t}{2}})^{[\alpha]+1}f\right|^{2}\right\|_{\infty}^{\frac{1}{2}}
\\ \leq& \frac{1}{2^{\alpha}}\left\|f\right\|_{\dot{\mathcal L}_{\alpha}^{c}(\mathcal T)}+\overline{\sum}\sup_{t>0}\frac{1}{t^{\alpha}}\left\| T_{t}\left|
(I-T_{t})^{[\alpha]+1}f\right|^{2}\right\|_{\infty}^{\frac{1}{2}}
\\ \leq& \big(\frac{1}{2^{\alpha}}+\overline{\sum}1\big)\left\|f\right\|_{\dot{\mathcal L}_{\alpha}^{c}(\mathcal T)}.
\end{aligned}
\end{equation*}
Taking the supremum over $t>0$ on the left-hand side of the above inequality yields \eqref{e2}. The proof of Theorem \ref{e25} is complete.
\hfill$\Box$\\

\section{The proof of Theorem \ref{26}}
In this part, we are going to show that Campanato space $\mathcal L_{\alpha}^{c}(\mathcal T)$ admits a self-improving property for all $\alpha>0$, which allows every element in this space has a higher-order cancellation property as in the case of BMO spaces \cite{cdly20,DSY,FHW}. Let $\alpha>0$ and $k$ be an integer greater than $\alpha$. As in the last section, the homogeneous part will be denoted by
$$\left\|f\right\|_{\dot{\mathcal L}_{\alpha,k}^{c}(\mathcal T)}=\sup_{t>0}\frac{1}{t^{\alpha}} \left\|T_{t}|(I-T_{t})^{k}f|^{2}\right\|_{\infty}^{\frac{1}{2}}.$$
The self-improving property of Campanato space will be established via the similar property of Lipschitz spaces. Let $\alpha>0$ and $k\geq [\alpha]+2$ be an integer. Define
\begin{equation*}
  \Lambda_{\alpha,k}({\mathcal T})=\{f\in \mathcal M: \left\|f\right\|_{\Lambda_{\alpha,k}({\mathcal T})}<\infty\},
\end{equation*}
where
\begin{equation*}
\left\|f\right\|_{\Lambda_{\alpha,k}({\mathcal T})}=\|f\|_{\infty}+ \left\|f\right\|_{\dot{\Lambda}_{\alpha,k}({\mathcal T})},\,\,\, \left\|f\right\|_{\dot{\Lambda}_{\alpha,k}({\mathcal T})}=\sup_{t>0}\frac{1}{t^{ \alpha-k}}\left\|\frac{\partial^{k}T_{t}f}{\partial t^{k}}\right\|_{\infty}.
\end{equation*}

Again, we will only need to deal with the equivalence of the homogeneous norms.
The following result has been obtained by the first two authors in \cite[Proposition 3.2]{HJ24} when the underlying von Neumann algebra is finite. Here we will provide a slightly different approach in general cases.

\begin{proposition}\label{20}
Let $\mathcal T=(T_t)_{t>0}$ be a 2-positive and contractive analytic semigroup acting on a von Neumann algebra $\mathcal M$. Let $\alpha>0$ and let $k$ be an integer such that $k\geq[\alpha]+2$.
Then, for every $f\in\mathcal M$, the following statements hold:
\begin{itemize}
  \item [(i)]$\left\|f\right\|_{\dot{\Lambda}_{\alpha,k}({\mathcal T})}\lesssim2^{(k-[\alpha]-1)(\frac{k+[\alpha]+2}{2}-\alpha)}\left\|f\right\|_{\dot{\Lambda}_{\alpha}({\mathcal T})};$
  \item [(ii)]$\left\|f\right\|_{\dot{\Lambda}_{\alpha}({\mathcal T})}\leq\max\{1,\frac{1}{([\alpha]+1-\alpha)([\alpha]+2-\alpha)\cdots(k-\alpha)}\}\left\|f\right\|_{\dot{\Lambda}_{\alpha,k}({\mathcal T})}$.
\end{itemize}
\end{proposition}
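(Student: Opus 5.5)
Both estimates reduce to a single step between the orders $j$ and $j+1$ of $t$-derivatives, which we then iterate from $j=[\alpha]+1$ up to $j=k$. Write $D^jf(t)=\frac{\partial^j T_tf}{\partial t^j}$ and $N_j(f)=\sup_{t>0}t^{j-\alpha}\|D^jf(t)\|_\infty$. Then $N_{[\alpha]+1}=\|\cdot\|_{\dot\Lambda_\alpha(\mathcal T)}$ and $N_k=\|\cdot\|_{\dot\Lambda_{\alpha,k}(\mathcal T)}$.

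For (i), use the semigroup property to write $D^{j+1}f(t)=\frac{\partial T_s}{\partial s}\big|_{s=t/2}\,D^{j}f(t/2)$. Proposition \ref{wss} with $m=1$ gives $\|\frac{\partial T_s}{\partial s}g\|_\infty\lesssim s^{-1}\|g\|_\infty$. Hence
\[
t^{j+1-\alpha}\|D^{j+1}f(t)\|_\infty\lesssim 2\,t^{j-\alpha}\|D^jf(t/2)\|_\infty = 2\cdot 2^{j-\alpha}(t/2)^{j-\alpha}\|D^jf(t/2)\|_\infty\le 2^{j+1-\alpha}N_j(f).
\]
Iterating for $j=[\alpha]+1,\dots,k-1$ gives the factor $\prod_{j=[\alpha]+2}^{k}2^{j-\alpha}=2^{(k-[\alpha]-1)(\frac{k+[\alpha]+2}{2}-\alpha)}$. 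This is exactly the stated exponent, so (i) is routine. The implicit constant depends on $K_\theta$ from Proposition \ref{wss}, raised to the power $k-[\alpha]-1$.

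For (ii), we integrate downward. The key point is that $D^jf(t)\to0$ as $t\to\infty$ for every $j\ge1$: by Proposition \ref{wss}, $\|D^jf(t)\|_\infty\lesssim t^{-j}\|f\|_\infty$. We then write $D^jf(t)=-\int_t^\infty D^{j+1}f(s)\,ds$, with the integral understood in the weak-$*$ sense. Since $j+1-\alpha>1$ whenever $j\ge[\alpha]+1$, this gives
\[
\|D^jf(t)\|_\infty\le N_{j+1}(f)\int_t^\infty s^{\alpha-j-1}\,ds=\frac{t^{\alpha-j}}{j-\alpha}N_{j+1}(f).
\]
So $N_j\le \frac{1}{j-\alpha}N_{j+1}$. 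Iterating from $j=[\alpha]+1$ to $k-1$ gives a product of factors $1/(j-\alpha)$ for $j=[\alpha]+1,\dots,k-1$. This product is bounded by $\max\{1,\prod_{j=[\alpha]+1}^{k}(j-\alpha)^{-1}\}$, because the omitted factor satisfies $k-\alpha\ge1$ when $k\ge[\alpha]+2$. This yields the stated constant, and in fact a slightly better one.

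The main obstacle is justifying the integral representation in the non-finite setting. We must check that $s\mapsto D^{j+1}f(s)$ is weak-$*$ integrable on $[t,\infty)$ and that the fundamental theorem of calculus holds there. To do this, pair with $\varphi\in L^1(\mathcal M)$. The map $s\mapsto\langle D^jf(s),\varphi\rangle$ is smooth, and in fact holomorphic by Definition \ref{sddddd}. Its derivative is $\langle D^{j+1}f(s),\varphi\rangle$, which is bounded by $N_{j+1}(f)s^{\alpha-j-1}\|\varphi\|_1$ and hence integrable on $[t,\infty)$. Its limit at infinity vanishes by the decay estimate from Proposition \ref{wss}. The scalar fundamental theorem of calculus then gives the identity, and taking the supremum over $\|\varphi\|_1\le1$ gives the norm bound. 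This argument uses only analyticity and never finiteness of $\mathcal M$.
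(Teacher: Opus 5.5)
Your part (i) is the paper's argument: factor $\partial_t^{j+1}T_tf$ through $t/2$, apply Proposition~\ref{wss} with $m=1$, and iterate.

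\textbf{Part (ii), the one-step bound.} You reach the same inequality $N_j\le N_{j+1}/(j-\alpha)$ as the paper, but by a different route. The paper splits $f=(f-T_tf)+T_tf$ and integrates $\partial_v^{j+1}T_vf$ only over $[t,2t]$. It then absorbs the $T_tf$ term using $2^{\alpha-j}<1$, and the factors $\frac{1-2^{\alpha-j}}{j-\alpha}$ and $\frac{1}{1-2^{\alpha-j}}$ combine to $\frac{1}{j-\alpha}$. You instead integrate to $\infty$, using the decay $\|\partial_t^jT_tf\|_\infty\lesssim t^{-j}\|f\|_\infty$ from Proposition~\ref{wss}. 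Your version is shorter and needs no absorption, which is a genuine advantage. The paper's step $X\le A+2^{\alpha-j}X\Rightarrow X\le A/(1-2^{\alpha-j})$ presupposes $X=\|f\|_{\dot\Lambda_{\alpha,j}(\mathcal T)}<\infty$. Proposition~\ref{wss} does not give this near $t=0$, so one would first have to restrict the supremum to $t\ge\varepsilon$. Your weak-$*$ fundamental-theorem-of-calculus justification is fine.

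\textbf{Part (ii), the constant.} The last step is wrong. Your iteration gives
$P:=\prod_{j=[\alpha]+1}^{k-1}(j-\alpha)^{-1}=(k-\alpha)\prod_{j=[\alpha]+1}^{k}(j-\alpha)^{-1}$.
Since $k-\alpha>1$, $P$ is \emph{larger} than the product in the statement, not smaller. The $\max\{1,\cdot\}$ rescues you only when $P\le 1$. For $\alpha=\frac12$, $k=2$ you get $P=2$, whereas the stated constant is $\max\{1,\frac{1}{(1/2)(3/2)}\}=\frac43$. So you have not proved (ii) with the stated constant, and your claim of a ``slightly better'' constant is false.

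The paper's proof has the same off-by-one. Iterating its bound for $\ell=[\alpha]+2,\dots,k$ gives the factors $\ell-1-\alpha$, ending at $k-1-\alpha$ rather than $k-\alpha$. There is also no reason to expect the extra factor $(k-\alpha)^{-1}$ to be recoverable. Take the Poisson semigroup on $\mathbb R$ and the unbounded function $|x|^\alpha$ with $0<\alpha<1$. Then $\partial_t^jP_tf(x)=\alpha(\alpha-1)\cdots(\alpha-j+1)\,\Re[(t-ix)^{\alpha-j}]/\cos(\alpha\pi/2)$, so $N_1=N_2/(1-\alpha)$ exactly.

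What both arguments actually prove is $\|f\|_{\dot\Lambda_\alpha(\mathcal T)}\le P\,\|f\|_{\dot\Lambda_{\alpha,k}(\mathcal T)}$, and you should state (ii) in that form. This is all that Theorem~\ref{26} needs, since it uses only $\simeq_{\alpha,k}$.
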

\begin{proof}
(i) Let $\ell\geq[\alpha]+2$. For $t>0$ fixed, we use Proposition \ref{wss} to obtain
\begin{equation*}
\begin{aligned}
\frac{1}{t^{\alpha-\ell}}\left\|\frac{\partial^{\ell}T_{t}f}{\partial t^{\ell}}\right\|_{\infty}
=&\frac{1}{t^{\alpha-\ell}}\left\|\left.\frac{\partial T_{v}}{\partial v}\right|_{v=\frac{t}{2}}\left.\frac{\partial^{\ell-1}T_{s}f}{\partial s^{\ell-1}}\right|_{s=\frac{t}{2}}\right\|_{\infty}
 \\ \nonumber =&2^{\ell-\alpha}\frac{1}{(\frac{t}{2})^{\alpha-(\ell-1)}}\left\|\left.\frac{\partial^{\ell-1}T_{s}f}{\partial s^{\ell-1}}\right|_{s=\frac{t}{2}}\right\|_{\infty}
\\ \nonumber \leq&2^{\ell-\alpha}\sup_{t>0}\frac{1}{t^{\alpha-(\ell-1)}}\left\|\frac{\partial^{\ell-1}T_{t}f}{\partial t^{\ell-1}}\right\|_{\infty}.
\end{aligned}
\end{equation*}
Taking the supremum over $t>0$ on the left-hand side, one gets
\begin{equation}\label{dnl}
 \left\|f\right\|_{\dot{\Lambda}_{\alpha,\ell}({\mathcal T})}\lesssim2^{\ell-\alpha} \left\|f\right\|_{\dot{\Lambda}_{\alpha,\ell-1}({\mathcal T})}.
\end{equation}
 Then applying (\ref{dnl}) to $\ell=k,\dotsm,[\alpha]+2$, we obtain the desired assertion by iteration.

(ii) We first claim
\begin{equation}\label{as}
  \left\|f\right\|_{\dot{\Lambda}_{\alpha,d}({\mathcal T})}\leq \frac{1}{1-2^{\alpha-d}}\sup_{t>0}\frac{1}{t^{\alpha-d}}\left\|\frac{\partial^{d}T_{t}}{\partial t^{d}}(f-T_{t}f)\right\|_{\infty}
\end{equation}
for any integer $d\geq[\alpha]+1$. Indeed, for $d\geq[\alpha]+1$ and $t>0$, one has
\begin{equation*}
\begin{aligned}
  \frac{1}{t^{\alpha-d}}\left\|\frac{\partial^{d}T_{t}}{\partial t^{d}}f\right\|_{\infty}=&\frac{1}{t^{\alpha-d}}\left\|\frac{\partial^{d}T_{t}}{\partial t^{d}}(f-T_{t}f+T_{t}f)\right\|_{\infty}
  \\ \leq& \frac{1}{t^{\alpha-d}}\left\|\frac{\partial^{d}T_{t}}{\partial t^{d}}(f-T_{t}f)\right\|_{\infty}+\frac{1}{t^{\alpha-d}}\left\|\frac{\partial^{d}T_{t}}{\partial t^{d}}T_{t}f\right\|_{\infty}
\\=&\frac{1}{t^{\alpha-d}}\left\|\frac{\partial^{d}T_{t}}{\partial t^{d}}(f-T_{t}f)\right\|_{\infty}+2^{\alpha-d}\frac{1}{(2t)^{\alpha-d}}\left\|\left.\frac{\partial^{d}T_{s}f}{\partial s^{d}}\right|_{s=2t}\right\|_{\infty}
\\ \leq&\sup_{t>0}\frac{1}{t^{\alpha-d}}\left\|\frac{\partial^{d}T_{t}}{\partial t^{d}}(f-T_{t}f)\right\|_{\infty}+2^{\alpha-d} \left\|f\right\|_{\dot{\Lambda}_{\alpha,d}({\mathcal T})}.
\end{aligned}
\end{equation*}
Then one can easily get \eqref{as} by taking the supremum over $t>0$ on the left-hand side and using the fact that $2^{\alpha-d} <1$ since $d>\alpha$.

Now we prove the desired assertion. Let $\ell\geq[\alpha]+2$ and $t>0$. Note that
\begin{equation}\label{r5}
\begin{aligned}
\frac{1}{t^{\alpha-(\ell-1)}}
\left\|
\frac{\partial^{\ell-1}T_t}{\partial t^{\ell-1}}
(f-T_tf)
\right\|_{\infty}
=&
\frac{1}{t^{\alpha-(\ell-1)}}
\left\|
\frac{\partial^{\ell-1}T_t}{\partial t^{\ell-1}}
\left(
-\int_0^t \frac{\partial T_s f}{\partial s}\,ds
\right)
\right\|_{\infty}
 \\ =&\frac{1}{t^{\alpha-(\ell-1)}}\left\|\int_{0}^{t}\frac{\partial^{\ell}T_{t+s}f}{\partial s^{\ell}}ds\right\|_{\infty}
 \\ (v=t+s)=&\frac{1}{t^{\alpha-(\ell-1)}}\left\|\int_{t}^{2t}\frac{\partial^{\ell}T_{v}f}{\partial v^{\ell}}dv\right\|_{\infty}
 \\ \leq&\frac{1}{t^{\alpha-(\ell-1)}}\int_{t}^{2t}\frac{v^{\alpha-\ell}}{v^{\alpha-\ell}}\left\|\frac{\partial^{\ell}T_{v}f}{\partial v^{\ell}}\right\|_{\infty}dv
\\ \leq&\sup_{v>0}\left\{\frac{1}{v^{\alpha-\ell}}\left\|\frac{\partial^{\ell}T_{v}f}{\partial v^{\ell}}\right\|_{\infty}\right\}\frac{1}{t^{\alpha-(\ell-1)}}\int_{t}^{2t}v^{\alpha-\ell}dv
\\ =&\frac{1-2^{\alpha-(\ell-1)}}{\ell-1-\alpha}\left\|f\right\|_{\dot{\Lambda}_{\alpha,\ell}({\mathcal T})}.
\end{aligned}
\end{equation}
Taking the supremum over $t>0$ on the left-hand side, it is easy to see
$$\frac{1}{1-2^{\alpha-(\ell-1)}}\sup_{t>0}\frac{1}{t^{\alpha-(\ell-1)}}\left\|\frac{\partial^{(\ell-1)}T_{t}}{\partial t^{(\ell-1)}}(f-T_{t}f)\right\|_{\infty}\leq\frac{1}{\ell-1-\alpha}\left\|f\right\|_{\dot{\Lambda}_{\alpha,\ell}({\mathcal T})}.$$
By the claim \eqref{as}, one gets
\begin{equation*}
 \left\|f\right\|_{\dot{\Lambda}_{\alpha,\ell-1}({\mathcal T})}
  \leq\frac{1}{\ell-1-\alpha}\left\|f\right\|_{\dot{\Lambda}_{\alpha,\ell}({\mathcal T})}.
\end{equation*}
By iteration, that is, applying \eqref{r5} to $\ell=[\alpha]+2,\dotsm,k$, one deduces
\begin{equation*}
  \left\|f\right\|_{\dot{\Lambda}_{\alpha}({\mathcal T})}\leq\frac{1}{([\alpha]+1-\alpha)([\alpha]+2-\alpha)\cdots(k-\alpha)}\left\|f\right\|_{\dot{\Lambda}_{\alpha,k}({\mathcal T})}.
\end{equation*}
\end{proof}

The following lemma is the higher-order analogue of Lemma \ref{190}.

\begin{lemma}\label{wd}
Let $\mathcal T=(T_t)_{t>0}$ be a 2-positive and contractive analytic semigroup acting on a von Neumann algebra
$\mathcal M$. Let $\alpha>0$ and let $k$ be an integer such that $k>\alpha$.
Then, for every $f\in\mathcal M$, we have
  \begin{equation*}
 \left\|f\right\|_{\dot{\Lambda}_{\alpha,k}(\mathcal T)}\lesssim_{\alpha,k}\sup_{t>0}\frac{1}{t^{\alpha}}\left\|T_{\frac{t}{2}}\left|\big(I-T_{2^{[\frac{k}{k-\alpha}]+1}t}\big)^{k}f\right|^{2}\right\|_{\infty}^{\frac{1}{2}}.
\end{equation*}
\end{lemma}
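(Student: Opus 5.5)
The plan is to repeat the proof of Lemma \ref{190} verbatim, with $[\alpha]+1$ replaced by $k$ and $b=2^{[\frac{k}{k-\alpha}]+1}$. The only structural facts used there are two: the exponent $[\alpha]+1$ exceeds $\alpha$, and the same integer appears both as the order of the time derivative and as the power of $I-T_{bt}$. Both survive the replacement, since $k>\alpha$.

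The first step is the absorption inequality
\begin{equation*}
\left\|f\right\|_{\dot{\Lambda}_{\alpha,k}(\mathcal T)}\leq \frac{1}{1-\sum_{r=1}^{k}\frac{C_{k}^{r}}{(br+1)^{k-\alpha}}}\sup_{t>0}\frac{1}{t^{\alpha-k}}\left\|\frac{\partial^{k} T_{t}}{\partial t^{k}}\big(I-T_{bt}\big)^{k}f\right\|_{\infty}.
\end{equation*}
To get it, expand $I=(I-T_{bt})^{k}+\sum_{r=1}^{k}(-1)^{r+1}C_k^rT_{brt}$ and apply $\partial_t^kT_t$ to $f$. Each cross term equals $\partial_s^kT_sf$ evaluated at $s=(br+1)t$. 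It is therefore bounded by
\[
(br+1)^{\alpha-k}\,t^{k-\alpha}\,\|f\|_{\dot\Lambda_{\alpha,k}(\mathcal T)},
\]
which uses that $\alpha-k<0$. Since $b>2^{k/(k-\alpha)}$, we have $(br+1)^{k-\alpha}>2^{k}r^{k-\alpha}\ge 2^k$. The coefficient sum is then below $(2^k-1)/2^k<1$, and the term can be absorbed exactly as in \eqref{la}.

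The absorption needs $\|f\|_{\dot\Lambda_{\alpha,k}(\mathcal T)}<\infty$ a priori. I expect this to be the main point to check, though the same issue is tacitly present in Lemma \ref{190}. The clean route is to truncate: replace the supremum over all $t>0$ by the supremum over $t\ge\epsilon$. That truncated quantity is finite by Proposition \ref{wss}, because $\|\partial_t^kT_tf\|_\infty\lesssim t^{-k}\|f\|_\infty$. Since $(br+1)t\ge t$, the cross terms stay in the range $t\ge\epsilon$, so the absorption goes through for each $\epsilon$. Letting $\epsilon\to0$ then gives the inequality.

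The second step bounds the right-hand side of the absorption inequality. Write $\partial_t^kT_t=\bigl(s^k\partial_s^kT_s\bigr)|_{s=t/2}\,(2/t)^k\,T_{t/2}$, using the semigroup property. By Proposition \ref{wss} with $m=k$,
\[
\frac{1}{t^{\alpha-k}}\Big\|\frac{\partial^{k} T_{t}}{\partial t^{k}}(I-T_{bt})^kf\Big\|_\infty\lesssim_k \frac{2^k}{t^\alpha}\,\big\|T_{t/2}(I-T_{bt})^kf\big\|_\infty .
\]
Then write $\|g\|_\infty=\||g|^2\|_\infty^{1/2}$ and apply the Kadison–Schwarz inequality \eqref{1} to $T_{t/2}$, which gives $|T_{t/2}g|^2\le T_{t/2}|g|^2$. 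This produces the right-hand side of Lemma \ref{wd}. Taking the supremum over $t$ completes the proof, with constant depending only on $\alpha$ and $k$.
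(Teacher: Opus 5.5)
Your proposal is correct and follows the paper's proof essentially verbatim: the same expansion of $I$, the same coefficient bound via $2^{([k/(k-\alpha)]+1)(k-\alpha)}>2^k$, the same absorption, and then Proposition \ref{wss} plus Kadison--Schwarz at time $t/2$. Your truncation to $t\ge\epsilon$ is a genuine improvement, since the paper absorbs without checking that $\|f\|_{\dot\Lambda_{\alpha,k}(\mathcal T)}<\infty$; there is also one harmless sign slip, in that the bare cross term $\|\partial_s^kT_sf|_{s=(br+1)t}\|_\infty$ is bounded by $(br+1)^{\alpha-k}t^{\alpha-k}\|f\|_{\dot\Lambda_{\alpha,k}(\mathcal T)}$ (not $t^{k-\alpha}$), so after the prefactor $t^{k-\alpha}$ only the coefficient $(br+1)^{\alpha-k}$ remains, exactly as you then use it.
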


\begin{proof}
The proof follows the same argument as that of Lemma \ref{190}. We only spell out the absorption step, since the corresponding
coefficient changes in the higher-order setting.

Fix $\alpha>0$ and an integer $k>\alpha$. By the identity
\[
I=
\left(I-T_{2^{[\frac{k}{k-\alpha}]+1}t}\right)^k
+
\sum_{r=1}^{k}(-1)^{r+1}C_k^r
T_{2^{[\frac{k}{k-\alpha}]+1}rt},
\]
together with the semigroup property, we obtain
\begin{equation*}
 \frac{1}{t^{\alpha-k}}
\left\|
\frac{\partial^kT_t f}{\partial t^k}
\right\|_\infty
\leq
\frac{1}{t^{\alpha-k}}
\left\|
\frac{\partial^kT_t}{\partial t^k}
\left(I-T_{2^{[\frac{k}{k-\alpha}]+1}t}\right)^k f
\right\|_\infty
+
\sum_{r=1}^{k}
\frac{C_k^r}
{\left(2^{[\frac{k}{k-\alpha}]+1}r+1\right)^{k-\alpha}}
\|f\|_{\dot{\Lambda}_{\alpha,k}(\mathcal T)}.
\end{equation*}
Moreover,
\[
\sum_{r=1}^{k}
\frac{C_k^r}
{\left(2^{[\frac{k}{k-\alpha}]+1}r+1\right)^{k-\alpha}}
<
\frac{1}{2^k}\sum_{r=1}^{k}C_k^r
=
\frac{2^k-1}{2^k}
<1,
\]
since
\[
2^{\left([\frac{k}{k-\alpha}]+1\right)(k-\alpha)}>2^k.
\]
Hence, taking the supremum over $t>0$ and absorbing the last term,
we get
\[
\|f\|_{\dot{\Lambda}_{\alpha,k}(\mathcal T)}
\leq
\frac{1}{
1-\displaystyle\sum_{r=1}^{k}
\frac{C_k^r}
{\left(2^{[\frac{k}{k-\alpha}]+1}r+1\right)^{k-\alpha}}
}
\sup_{t>0}
\frac{1}{t^{\alpha-k}}
\left\|
\frac{\partial^kT_t}{\partial t^k}
\left(I-T_{2^{[\frac{k}{k-\alpha}]+1}t}\right)^k f
\right\|_\infty.
\]

It remains to estimate the last term. As in the proof of
Lemma \ref{190}, Proposition \ref{wss} and the Kadison--Schwarz inequality
\eqref{1} yield
\begin{equation*}
  \frac{1}{t^{\alpha-k}}
\left\|
\frac{\partial^kT_t}{\partial t^k}
\left(I-T_{2^{[\frac{k}{k-\alpha}]+1}t}\right)^k f
\right\|_\infty
\lesssim_k
\frac{1}{t^\alpha}
\left\|
T_{t/2}
\left|
\left(I-T_{2^{[\frac{k}{k-\alpha}]+1}t}\right)^k f
\right|^2
\right\|_\infty^{1/2}.
\end{equation*}
Taking the supremum over $t>0$ gives the desired estimate.
\end{proof}

The following theorem can be regarded as a strengthened version of Theorem \ref{e25}.
\begin{theorem}\label{261}
 Let $\alpha>0$ and let $k$ be an integer such that $k\geq[\alpha]+2$. If $\mathcal T=(T_t)_{t>0}$ is a 2-positive and contractive analytic semigroup on a von Neumann algebra $\mathcal M$,
then the space $\mathcal L^c_{\alpha,k}(\mathcal T)$ is isomorphic to $\Lambda_{\alpha,k}(\mathcal T)$ with equivalent norms.
\end{theorem}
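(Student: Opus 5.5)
We reduce to the homogeneous parts, that is, we show
$$\|f\|_{\dot{\mathcal L}_{\alpha,k}^{c}(\mathcal T)}\simeq_{\alpha,k}\|f\|_{\dot{\Lambda}_{\alpha,k}(\mathcal T)}.$$
The plan is to repeat the proof of Theorem \ref{e25} verbatim with $[\alpha]+1$ replaced by $k$. Lemma \ref{190} is replaced by Lemma \ref{wd}, and Lemma \ref{531} is used with $M=k$. Only $k>\alpha$ is needed for the absorption arguments, so the case $k\ge[\alpha]+2$ introduces no new obstruction.

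\textbf{Upper bound.} For $\|f\|_{\dot{\mathcal L}_{\alpha,k}^{c}}\lesssim\|f\|_{\dot{\Lambda}_{\alpha,k}}$, fix $t>0$ and apply Lemma \ref{531} with $M=k$, $Q=1$:
$$(I-T_t)^k=(I-T_{t/2})^k\Big(I+\sum_{N=1}^kC_k^NT_{Nt/2}\Big).$$
Split the expression as $A+B$ via the triangle inequality for the seminorm $g\mapsto\|T_t|g|^2\|_\infty^{1/2}$, exactly as in \eqref{150}.

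The term $A$ is bounded by $2^{-\alpha}\|f\|_{\dot{\mathcal L}_{\alpha,k}^{c}}$. This uses the contractivity and positivity of $T_t$, which give $T_{2t}|g|^2=T_t(T_t|g|^2)$, so $\|T_{2t}|g|^2\|\le\|T_t|g|^2\|$, followed by rescaling $t\mapsto 2t$.

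For $B$, first drop the outer $T_t$ by contractivity. Then write each $T_{Nt/2}(I-T_{t/2})^kf$ as a $k$-fold integral of $\partial_v^kT_vf$ over $v\in[\frac{Nt}{2}+s_1+\dots+s_{k-1},\frac{(N+1)t}{2}+s_1+\dots+s_{k-1}]$, with $s_j\in[0,t/2]$. Bounding $\|\partial_v^kT_vf\|\le v^{\alpha-k}\|f\|_{\dot\Lambda_{\alpha,k}}$ gives
$$B\lesssim\|f\|_{\dot\Lambda_{\alpha,k}}\,t^{-\alpha}\int\!\cdots\!\int v^{\alpha-k}.$$
Since $N\ge1$ keeps $v\ge t/2$, the integral is bounded by $(t/2)^{k-1}\cdot\frac t2\cdot (t/2)^{\alpha-k}\lesssim t^\alpha$. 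This is in fact easier than \eqref{bhi}–\eqref{162}, and no case split on whether $\alpha$ is an integer is needed. Absorbing $2^{-\alpha}\|f\|_{\dot{\mathcal L}_{\alpha,k}^{c}}$ requires this quantity to be finite, which holds because $f\in\mathcal M$ is assumed to lie in the space, or by a truncation argument if needed.

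\textbf{Lower bound.} For $\|f\|_{\dot\Lambda_{\alpha,k}}\lesssim\|f\|_{\dot{\mathcal L}_{\alpha,k}^{c}}$, Lemma \ref{wd} reduces the claim to bounding
$$t^{-\alpha}\big\|T_{t/2}|(I-T_{bt})^kf|^2\big\|_\infty^{1/2},\qquad b=2^{[\frac{k}{k-\alpha}]+1}.$$
Apply Lemma \ref{531} with $M=k$, $Q=b$. As in the final display of Section 3, use the triangle inequality and the Kadison–Schwarz inequality \eqref{1}:
$$|T_sg|^2\le T_s|g|^2,$$
which gives $T_{t/2}|T_sg|^2\le T_{s+t/2}|g|^2$. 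Each term of the weighted sum is then controlled by $\|f\|_{\dot{\mathcal L}_{\alpha,k}^{c}}$, up to a rescaling $t/2\mapsto t$ and dropping the extra semigroup factor by contractivity. The resulting constant is the finite number $\overline{\sum}1$ plus a power of $2$.

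\textbf{Expected difficulty and conclusion.} The only point needing care is the bookkeeping of time parameters in the Kadison–Schwarz step. One must check that every surviving operator has the form $T_u|(I-T_{t/2})^kf|^2$ with $u\ge t/2$, so that monotonicity in $u$ applies. The absorption constant in Lemma \ref{wd} has already been handled, so no genuine obstacle is expected. Combining the two bounds yields the theorem, and together with Proposition \ref{20} and Theorem \ref{e25} it gives Theorem \ref{26} for $k\ge[\alpha]+2$; the case $k=[\alpha]+1$ is Theorem \ref{e25} itself.
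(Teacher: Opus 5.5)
Your proposal follows the paper's proof closely. The lower bound is the paper's argument verbatim: Lemma \ref{wd}, then Lemma \ref{531} with $M=k$ and $Q=2^{[\frac{k}{k-\alpha}]+1}$, then Kadison--Schwarz and monotonicity of $u\mapsto\|T_u|g|^2\|_\infty$.

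Your upper bound is the paper's $\alpha\ge1$ branch. The paper instead treats $0<\alpha<1$ separately, bounding $t^{-\alpha}\|(I-T_t)^kf\|_\infty$ directly by iterated integration of $v^{\alpha-k}$, with no absorption. For $\alpha\ge1$ it estimates $B$ by integrating $v^{\alpha-k}$ up to $\infty$, which uses $k>\alpha+1$. Your cruder estimate $v^{\alpha-k}\le(t/2)^{\alpha-k}$, over a region of volume $(t/2)^k$, is valid because $v\ge t/2$ and $k>\alpha$. It gives $B\le(2^k-1)2^{-\alpha}\|f\|_{\dot\Lambda_{\alpha,k}(\mathcal T)}$. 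It also makes the case split and the exact integrations unnecessary, and would equally replace \eqref{bhi}--\eqref{162} in Theorem \ref{e25}.

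The one step not justified as written is the absorption of $2^{-\alpha}\|f\|_{\dot{\mathcal L}^c_{\alpha,k}(\mathcal T)}$.

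\begin{itemize}
\item Appealing to $f$ being ``assumed to lie in the space'' is circular for the inclusion $\Lambda_{\alpha,k}(\mathcal T)\subseteq\mathcal L^c_{\alpha,k}(\mathcal T)$. For a general $f\in\mathcal M$, the quantity $t^{-\alpha}\|T_t|(I-T_t)^kf|^2\|_\infty^{1/2}$ can blow up as $t\to0$.
\item Restricting the supremum to $t\ge\epsilon$ does not close either, because $A$ at time $t$ is controlled by the same quantity at time $t/2$.
\end{itemize}

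The paper's $\alpha\ge1$ branch and the proof of Theorem \ref{e25} absorb in the same way without comment.

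The simplest repair is to avoid absorption altogether. The direct estimate the paper uses for $0<\alpha<1$ extends to every $\alpha>0$ once the $k$-fold integral is bounded over a simplex rather than iteratively:
\[
\|(I-T_t)^kf\|_\infty\le\|f\|_{\dot\Lambda_{\alpha,k}(\mathcal T)}\int_{[0,t]^k}(s_1+\cdots+s_k)^{\alpha-k}\,ds\le\|f\|_{\dot\Lambda_{\alpha,k}(\mathcal T)}\int_0^{kt}\frac{r^{\alpha-1}}{(k-1)!}\,dr=\frac{(kt)^{\alpha}}{\alpha\,(k-1)!}\,\|f\|_{\dot\Lambda_{\alpha,k}(\mathcal T)}.
\]
The singularity at the origin of $\mathbb R^k$ is integrable precisely because $\alpha>0$. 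Combined with $\|T_t|g|^2\|_\infty\le\|g\|_\infty^2$, this gives the upper bound with no a priori finiteness.

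Alternatively, keep your absorption but run it for $T_\epsilon f$ and then let $\epsilon\to0$:
\begin{itemize}
\item the Campanato seminorm of $T_\epsilon f$ is finite by analyticity;
\item its Lipschitz seminorm is at most $\|f\|_{\dot\Lambda_{\alpha,k}(\mathcal T)}$, since $k>\alpha$;
\item the limit $\epsilon\to0$ is handled by the weak-$*$ lower semicontinuity of $g\mapsto\|T_t|g|^2\|_\infty$.
\end{itemize}
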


\begin{proof}
We first show
\begin{equation}\label{w4}
 \|f\|_{\dot{\mathcal L}_{\alpha,k}^{c}(\mathcal T)}\lesssim_{\alpha,k}\|f\|_{\dot{\Lambda}_{\alpha,k}(\mathcal T)}.
\end{equation}

We start with the case of $0<\alpha<1$. Let $t>0$ be fixed. One may compute
\begin{equation*}
\begin{aligned}
  \frac{1}{t^{\alpha}}\left\|T_{t}|(I-T_{t})^{k}f|^{2}\right\|_{\infty}^{\frac{1}{2}} \leq&\frac{1}{t^{\alpha}}\left\|(I-T_{t})^{k}f\right\|_{\infty}
\\=&\frac{1}{t^{\alpha}}\left\|\int^{t}_{0}\cdots\int^{t}_{0}\int^{t}_{0}\frac{\partial^{k}T_{s_{1}+\cdots +s_{k-1}+s_{k}}f}{\partial s_{k}^{k}}ds_{k}ds_{k-1}\cdots ds_{1}\right\|_{\infty}.
\end{aligned}
\end{equation*}
Let $v=s_{1}+\cdots +s_{k-1}+s_{k}$. Then by the fact that
$$\alpha-k<\alpha+1-k<\cdots<\alpha+(k-2)-k<-1,\;-1<\alpha-1<0,$$
it is easy to see
\begin{equation*}
\begin{aligned}
  \frac{1}{t^{\alpha}}\left\|T_{t}|(I-T_{t})^{k}f|^{2}\right\|_{\infty}^{\frac{1}{2}}\leq&
\frac{1}{t^{\alpha}}\left\|\int^{t}_{0}\cdots\int^{t}_{0}\int^{t+s_{1}+\cdots +s_{k-1}}_{s_{1}+\cdots +s_{k-1}}\frac{\partial^{k}T_{v}f}{\partial v^{k}}dvds_{k-1}\cdots ds_{1}\right\|_{\infty}
\\ \leq&\frac{1}{t^{\alpha}}\int^{t}_{0}\cdots\int^{t}_{0}\int^{t+s_{1}+\cdots +s_{k-1}}_{s_{1}+\cdots +s_{k-1}}\frac{v^{\alpha-k}}{v^{\alpha-k}}\left\|\frac{\partial^{k}T_{v}f}{\partial v^{k}}\right\|_{\infty}dvds_{k-1}\cdots ds_{1}
\\ \leq&\left\|f\right\|_{\dot{\Lambda}_{\alpha,k}(\mathcal T)}\frac{1}{t^{\alpha}}\int^{t}_{0}\cdots\int^{t}_{0}\int^{t+s_{1}+\cdots +s_{k-1}}_{s_{1}+\cdots +s_{k-1}}v^{\alpha-k}dvds_{k-1}\cdots ds_{1}
\\ \leq&\left\|f\right\|_{\dot{\Lambda}_{\alpha,k}(\mathcal P)}\frac{1}{t^{\alpha}}\int^{t}_{0}\frac{s_{1}^{\alpha-1}}{(k-(\alpha+1))\cdots(1-\alpha)} ds_{1}
\\ =&\frac{1}{(k-(\alpha+1))\cdots(1-\alpha)\alpha}\left\|f\right\|_{\dot{\Lambda}_{\alpha,k}(\mathcal T)}.
\end{aligned}
\end{equation*}
Taking supremum over $t>0$ on the left-hand side, one gets
\begin{equation}\label{xv}
 \left\|f\right\|_{\dot{\mathcal L}_{\alpha,k}^{c}(\mathcal T)}\leq\frac{1}{(k-(\alpha+1))(k-(\alpha+2))\cdots(1-\alpha)\alpha}\left\|f\right\|_{\dot{\Lambda}_{\alpha,k}(\mathcal T)},
\end{equation}

Now let us consider the case where $\alpha\geq1$. By Lemma \ref{531}, for $t>0$ fixed, one obtains
\begin{equation*}
\begin{aligned}
 \frac{1}{t^{\alpha}}\left\|T_{t}|(I-T_{t})^{k}f|^{2}\right\|_{\infty}^{\frac{1}{2}}
=&  \frac{1}{t^{\alpha}}\left\|T_{t}\left|(I-T_{\frac{t}{2}})^{k}(I+\sum_{r=1}^{k}C_{k}^{r}T_{\frac{r}{2}t})f\right|^{2}\right\|_{\infty}^{\frac{1}{2}}
\\ \nonumber
   \leq&\frac{1}{t^{\alpha}}\left\|T_{t}\left|(I-T_{\frac{t}{2}})^{k}f\right|^{2}\right\|_{\infty}^{\frac{1}{2}}
+\sum_{r=1}^{k}C_{k}^{r}\frac{1}{t^{\alpha}}
  \left\|T_{t}\left|T_{\frac{rt}{2}}(I-T_{\frac{t}{2}})^{k}f\right|^{2}\right\|_{\infty}^{\frac{1}{2}}
\\ \nonumber \leq&
\frac{1}{2^{\alpha}}\left\|f\right\|_{\dot{\mathcal L}_{\alpha,k}^{c}(\mathcal T)}+\sum_{r=1}^{k}C_{k}^{r}\frac{1}{t^{\alpha}}
  \left\|T_{t}\left|T_{\frac{rt}{2}}(I-T_{\frac{t}{2}})^{k}f\right|^{2}\right\|_{\infty}^{\frac{1}{2}}.
\end{aligned}
\end{equation*}
On the other hand, note that
\begin{equation*}
\begin{aligned}
&\sum_{r=1}^{k}C_{k}^{r}\frac{1}{t^{\alpha}}
  \left\|T_{\frac{rt}{2}}(I-T_{\frac{t}{2}})^{k}f\right\|_{\infty}
  \\=&\sum_{r=1}^{k}C_{k}^{r}\frac{1}{t^{\alpha}}\left\|\int^{\frac{t}{2}}_{0}\cdots\int^{\frac{t}{2}}_{0}\int^{\frac{(r+1)t}{2}}_{\frac{rt}{2}}\frac{\partial^{k}T_{s_{1}+\cdots +s_{k-1}+s_{k}}f}{\partial s_{k}^{k}}ds_{k}ds_{k-1}\cdots ds_{1}\right\|_{\infty}.
\end{aligned}
\end{equation*}
Thus, letting $s=s_{1}+\cdots +s_{k-1}+s_{k}$ and using the condition that $\alpha+1-k<0$, one gets
\begin{equation*}
\begin{aligned}
&\sum_{r=1}^{k}C_{k}^{r}\frac{1}{t^{\alpha}}
  \left\|T_{t}\left|T_{\frac{rt}{2}}(I-T_{\frac{t}{2}})^{k}f\right|^{2}\right\|_{\infty}^{\frac{1}{2}}\\ \leq&\sum_{r=1}^{k}C_{k}^{r}\frac{1}{t^{\alpha}}\left\|\int^{\frac{t}{2}}_{0}\cdots\int^{\frac{t}{2}}_{0}\int^{\frac{(r+1)t}{2}+s_{1}+\cdots +s_{k-1}}_{\frac{rt}{2}+s_{1}+\cdots +s_{k-1}}\frac{\partial^{k}T_{s}f}{\partial s^{k}}dsds_{k-1}\cdots ds_{1}\right\|_{\infty}
 \\ \leq&\sum_{r=1}^{k}C_{k}^{r}\frac{1}{t^{\alpha}}\int^{\frac{t}{2}}_{0}\cdots\int^{\frac{t}{2}}_{0}\int^{\frac{(r+1)t}{2}+s_{1}+\cdots +s_{k-1}}_{\frac{rt}{2}+s_{1}+\cdots +s_{k-1}}\frac{s^{\alpha-k}}{s^{\alpha-k}}\left\|\frac{\partial^{k}T_{s}f}{\partial s^{k}}\right\|_{\infty}dsds_{k-1}\cdots ds_{1}
  \\ \leq&\left\|f\right\|_{\dot{\Lambda}_{\alpha,k}(\mathcal T)}\sum_{r=1}^{k}C_{k}^{r}\frac{1}{t^{\alpha}}\int^{\frac{t}{2}}_{0}\cdots\int^{\frac{t}{2}}_{0}\frac{(\frac{rt}{2}+s_{1}+\cdots +s_{k-1})^{\alpha+1-k}}{k-(\alpha+1)}ds_{k-1}\cdots ds_{1}
  \\ \leq&\sum_{r=1}^{k}C_{k}^{r}\frac{r^{\alpha+1-k}}{2^{\alpha}(k-(\alpha+1))}\left\|f\right\|_{\dot{\Lambda}_{\alpha,k}(\mathcal T)}.
\end{aligned}
\end{equation*}
This gives
\begin{equation*}
 \frac{1}{t^{\alpha}}\left\|T_{t}|(I-T_{t})^{k}f|^{2}\right\|_{\infty}^{\frac{1}{2}}\leq
\frac{1}{2^{\alpha}}\left\|f\right\|_{\dot{\mathcal L}_{\alpha,k}^{c}(\mathcal T)}+\sum_{r=1}^{k}C_{k}^{r}\frac{r^{\alpha+1-k}}{2^{\alpha}(k-(\alpha+1))}\left\|f\right\|_{\dot{\Lambda}_{\alpha,k}(\mathcal T)}.
\end{equation*}
Taking the supremum over $t>0$ on the left-hand side, one obtains
\begin{equation*}
  \left\|f\right\|_{\dot{\mathcal L}_{\alpha,k}^{c}(\mathcal T)}\leq\frac{1}{2^{\alpha}}\left\|f\right\|_{\dot{\mathcal L}_{\alpha,k}^{c}(\mathcal T)}+\sum_{r=1}^{k}C_{k}^{r}\frac{r^{\alpha+1-k}}{2^{\alpha}(k-(\alpha+1))}\left\|f\right\|_{\dot{\Lambda}_{\alpha,k}(\mathcal T)}.
\end{equation*}
Since $\frac{1}{2^{\alpha}}<1$ holds for any $\alpha\geq1$, one then deduces
\begin{equation}\label{xc}\left\|f\right\|_{\dot{\mathcal L}_{\alpha,k}^{c}(\mathcal T)}\leq\frac{1}{2^{\alpha}-1}\sum_{r=1}^{k}C_{k}^{r}\frac{r^{\alpha+1-k}}{(k-(\alpha+1))}\|f\|_{\dot{\Lambda}_{\alpha,k}(\mathcal T)}.\end{equation}
Therefore, combining \eqref{xv} with \eqref{xc}, we get \eqref{w4}.

\smallskip

Now we prove the converse direction
\begin{equation*}
     \|f\|_{\dot{\Lambda}_{\alpha,k}(\mathcal T)}\lesssim_{\alpha,k} \|f\|_{\dot{\mathcal L}_{\alpha,k}^{c}(\mathcal T)}.
\end{equation*}
To this end, by Lemma \ref{wd}, we only need to show
\begin{equation}\label{w6}
\sup_{t>0}\frac{1}{t^{\alpha}}\left\| T_{\frac{t}{2}}\left|
\big(I-T_{2^{[\frac{k}{k-\alpha}]+1}t}\big)^{k}f\right|^{2}\right\|_{\infty}^{\frac{1}{2}}\lesssim_{\alpha,k} \left\|f\right\|_{\dot{\mathcal L}_{\alpha,k}^{c}(\mathcal T)}.
\end{equation}
Applying Lemma \ref{531} with $M=k$ and $Q=2^{[\frac{k}{k-\alpha}]+1}$, and using the Kadison-Schwarz inequality \eqref{1}, one gets
\begin{equation*}
\begin{aligned}
 &\frac{1}{t^{\alpha}}\left\| T_{\frac{t}{2}}\left|
(I-T_{2^{[\frac{k}{k-\alpha}]+1}t})^{k}f\right|^{2}\right\|_{\infty}^{\frac{1}{2}}
\\ \leq&\frac{1}{t^{\alpha}}\left\|T_{\frac{t}{2}}\left|
(I-T_{\frac{t}{2}})^{k}f\right|^{2}\right\|_{\infty}^{\frac{1}{2}}
+\overline{\sum}\frac{1}{t^{\alpha}}\left\| T_{\frac{t}{2}}\left|
(I-T_{\frac{t}{2}})^{k}
T_{\sum_{i=1}^{2^{[\frac{k}{k-\alpha}]+2}-1}\frac{r_{i}}{2}t}f\right|^{2}\right\|_{\infty}^{\frac{1}{2}}
\\ \leq &\frac{1}{2^{\alpha}}\left\|f\right\|_{\dot{\mathcal L}_{\alpha}^{c}(\mathcal T)}
+\overline{\sum}\frac{1}{t^{\alpha}}\left\| T_{\sum_{i=1}^{2^{[\frac{k}{k-\alpha}]+2}-1}\frac{r_{i}t}{2}+\frac t2}\left|
(I-T_{\frac{t}{2}})^{k}f\right|^{2}\right\|_{\infty}^{\frac{1}{2}}
\\ \leq &\frac{1}{2^{\alpha}}\left\|f\right\|_{\dot{\mathcal L}_{\alpha}^{c}(\mathcal T)}
+\overline{\sum}\sup_{t>0}\frac{1}{t^{\alpha}}\left\| T_{t}\left|
(I-T_{t})^{k}f\right|^{2}\right\|_{\infty}^{\frac{1}{2}}
\\ \leq &\big(\frac{1}{2^{\alpha}}+\overline{\sum}1\big)\left\|f\right\|_{\dot{\mathcal L}_{\alpha,k}^{c}(\mathcal T)}.
\end{aligned}
\end{equation*}
Taking the supremum over $t>0$ on the left-hand side of the above inequality yields the assertion \eqref{w6}. Therefore, this proof is complete.
\end{proof}

Now one may conclude Theorem \ref{26}.

\textbf{The proof of Theorem \ref{26}.}
Let $f\in \mathcal M$. Combining the results in Theorem \ref{e25}, Proposition \ref{20} and Theorem \ref{261}, it is easy to get the following equivalences
\begin{equation*}
 \left\|f\right\|_{\mathcal L_{\alpha}^{c}(\mathcal T)}\simeq_{\alpha}\left\|f\right\|_{\Lambda_{\alpha}(\mathcal T)}\simeq_{\alpha,k}\left\|f\right\|_{\Lambda_{\alpha,k}(\mathcal T)}\simeq_{\alpha,k}\left\|f\right\|_{\mathcal L_{\alpha,k}^{c}(\mathcal T)}.
\end{equation*}
\hfill$\Box$\\

As Corollary \ref{jy} follows from Theorem \ref{e25}, one may deduce the following corollary from Theorem \ref{261}.
\begin{coro}
Suppose $\alpha>0$ and let $k$ be any integer greater than $\alpha$.
If, additionally, $(T_t)_{t>0}$ is completely positive, then the column space $\mathcal L_{\alpha,k}^{c}(\mathcal T)$ is completely isomorphic to the row space $\mathcal L_{\alpha,k}^{r}(\mathcal T)$ with equivalent norms.
\end{coro}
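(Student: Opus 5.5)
The plan is to transfer the column/row comparison through the Lipschitz space $\Lambda_{\alpha,k}$, which is insensitive to the adjoint, and to do this at every matrix level. Fix $n\geq1$ and consider the semigroup $(I_n\otimes T_t)_{t>0}$ on $M_n\otimes\mathcal M$. This is the approach used for Corollary \ref{jy} and its completely bounded version \eqref{js}, with Theorem \ref{261} (and Theorem \ref{26} for $[\alpha]<k<[\alpha]+2$, i.e. $k=[\alpha]+1$, where $\mathcal L_{\alpha,k}=\mathcal L_\alpha$ and Theorem \ref{e25} applies directly) in place of Theorem \ref{e25}.

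The first step is to check the hypotheses of Theorem \ref{261} for $(I_n\otimes T_t)_{t>0}$. Complete positivity of $T_t$ makes each $I_n\otimes T_t$ completely positive, in particular $2$-positive. Contractivity needs care, since only $\|T_t\|\le1$ is assumed. I would argue that a completely positive map satisfies $\|T\|_{cb}=\|T\|=\|T(1)\|$, so $\|I_n\otimes T_t\|\le1$. Weak-$*$ continuity at $0$ holds entrywise. For analyticity, the extension $I_n\otimes T_z$ is bounded by $\|T_z\|$ only up to a factor $n$ in general. To avoid $n$-dependence, I would instead use the characterization of Proposition \ref{wss}. The required bound
\[
\sup_t\bigl\|t^m\partial_t^m (I_n\otimes T_t)\bigr\|
\]
is a cb-norm estimate for $t^mA^me^{-tA}$. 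This is the main obstacle. I would try to write $t^m\partial_t^mT_t$ as a combination of finitely many differences of completely positive maps, or to argue via the contour integral with the resolvent. Since $R(z,A)=\int_0^\infty e^{-zs}T_s\,ds$ is a Laplace transform of completely positive contractions, for $\Re z>0$ its cb-norm is controlled just like its norm. For $z$ in the other part of the sector one would need more, and this is where I expect the difficulty. If a uniform constant cannot be obtained, I would fall back on noting that the constants in Theorem \ref{261} depend only on the constant in \eqref{wss0}, and verify \eqref{wss0} for $I_n\otimes T_t$ uniformly by the Laplace-transform/contour argument.

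With the hypotheses in hand, Theorem \ref{261} gives, with constants depending only on $\alpha,k$ and not on $n$,
\[
\|g\|_{M_n(\mathcal L^c_{\alpha,k})}\simeq\|g\|_{\Lambda_{\alpha,k}((I_n\otimes T_t))},
\]
and likewise for $g^*$. The Lipschitz norm is adjoint-invariant, because $I_n\otimes T_t$ is positive and hence $*$-preserving, so $\partial_t^k(I_n\otimes T_t)g^*=(\partial_t^k(I_n\otimes T_t)g)^*$. Combining these,
\[
\|g\|_{M_n(\mathcal L^r_{\alpha,k})}=\|g^*\|_{M_n(\mathcal L^c_{\alpha,k})}\simeq\|g^*\|_{\Lambda_{\alpha,k}}=\|g\|_{\Lambda_{\alpha,k}}\simeq\|g\|_{M_n(\mathcal L^c_{\alpha,k})}
\]
uniformly in $n$. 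This shows the identity map is a complete isomorphism between the column and row spaces.
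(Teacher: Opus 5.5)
Your route is the one the paper intends: its entire proof is the sentence that the corollary follows from Theorem \ref{261} as Corollary \ref{jy} follows from Theorem \ref{e25}. That means applying the scalar result to $(I_n\otimes T_t)_{t>0}$ on $M_n(\mathcal M)$ and passing through the adjoint-invariant Lipschitz norm, with Theorem \ref{e25} covering $k=[\alpha]+1$ as you note. But your proposal is incomplete at exactly the point you flag, and the paper's one-line deduction passes over it as well. A complete isomorphism needs constants independent of $n$. Your inequality $\|g\|_{M_n(\dot{\mathcal L}^c_{\alpha,k})}\lesssim\|g\|_{\dot\Lambda_{\alpha,k}}$ is uniform in $n$, since it uses only $2$-positivity, contractivity ($\|I_n\otimes T_t\|\le\|T_t\|_{cb}=\|T_t\|\le1$) and Kadison--Schwarz. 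The reverse inequality goes through Lemma \ref{wd} (Lemma \ref{190} when $k=[\alpha]+1$), whose constant is the one in \eqref{wss0} for $I_n\otimes T_t$. Uniformity in $n$ therefore means $\sup_{t>0}\|t^kA^ke^{-tA}\|_{cb}<\infty$. This is not a formal consequence of the scalar bound, because $t^kA^ke^{-tA}$ is not completely positive, and for such maps the cb norm can exceed the norm. Your Laplace-transform remark controls the resolvent only where it is a positive average of the $T_s$. That gives sectoriality of angle $\pi/2$, which every bounded semigroup has and which does not give analyticity. Your fallback simply restates the same unproved estimate.

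You can close the gap without any cb-analyticity by replacing $\Lambda_{\alpha,k}$ with the difference quantity $\Delta(g)=\sup_{t>0}t^{-\alpha}\|(I-\mathcal S_t)^kg\|_{M_n(\mathcal M)}$, where $\mathcal S_t=I_n\otimes T_t$; this quantity is also adjoint-invariant. Trivially $\|g\|_{M_n(\dot{\mathcal L}^c_{\alpha,k})}\le\Delta(g)$. Conversely, Lemma \ref{531} with $Q=1$, contractivity and Kadison--Schwarz give
\[
\|(I-\mathcal S_t)^kg\|\le\|(I-\mathcal S_{t/2})^kg\|+(2^k-1)(t/2)^{\alpha}\|g\|_{M_n(\dot{\mathcal L}^c_{\alpha,k})}.
\]
Iterating down to $t2^{-J}$ yields $\Delta(g)\le\frac{2^k-1}{2^\alpha-1}\|g\|_{M_n(\dot{\mathcal L}^c_{\alpha,k})}$, provided $\|(I-\mathcal S_s)^kg\|\to0$ as $s\to0$. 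This qualitative fact needs analyticity only at the scalar level. Compressing $\mathcal S_t|h|^2$ to a diagonal entry shows $\|g_{ij}\|_{\dot{\mathcal L}^c_{\alpha,k}}\le\|g\|_{M_n(\dot{\mathcal L}^c_{\alpha,k})}$, so $g_{ij}\in\Lambda_{\alpha,k}$ by the scalar theorem. Integrating $\partial_v^kT_vg_{ij}$ over $[0,s]^k$ then gives $\|(I-T_s)^kg_{ij}\|\lesssim s^{\alpha}$. Hence $\|g\|_{M_n(\dot{\mathcal L}^r_{\alpha,k})}\le\Delta(g^*)=\Delta(g)\lesssim_{\alpha,k}\|g\|_{M_n(\dot{\mathcal L}^c_{\alpha,k})}$ uniformly in $n$, and the other direction follows symmetrically.
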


\appendix
\section{The case of the classical heat/Poisson semigroups} In this appendix, we collect all the characterizations of classical Campanato/Lipschitz spaces on $\mathbb R^n$ closely related to the ones defined by the heat/Poisson semigroups, and in particular we will give a `classical' proof of the equivalence with the spaces defined purely via Poisson semigroup for $0<\alpha<1/2$. Moreover, we establish the desired coincidences between the semigroup Campanato spaces and their little versions that were first introduced in \cite{HJ24}, see \eqref{little1} and \eqref{little2} below.

On $\mathbb R^n$. Let $\Delta=-\sum_{i=1}^{n}\frac{\partial^{2}}{\partial x_{i}^{2}}$ be the Laplacian. The heat semigroup $\mathcal H=(H_{t})_{t>0}$ where $H_{t}=e^{-t\Delta}$  admitting the kernel $$h_{t}(x)=\frac{\exp(-\frac{|x|^2}{4t})}{(4\pi t)^{\frac{n}{2}}};$$
the associated Poisson semigroup $\mathcal P=(P_{t})_{t>0}$ where $P_{t}=e^{-t\sqrt{\Delta}}$ having kernel
\begin{equation}\label{pkou}
 p_{t}(x)=\frac{c_{n}t}{(t^{2}+|x|^{2})^{\frac{n+1}{2}}},
\end{equation}
where $c_{n}=\frac{\Gamma(\frac{n+1}{2})}{\pi^{\frac{n+1}{2}}}$. Let $\alpha>0$. The spaces $\Lambda_{\alpha}(\mathbb{R}^{n})$, $\mathcal{L}_{\alpha,\Delta}(\mathbb{R}^{n})$, $\mathcal{L}_{\alpha,\sqrt{\Delta}}(\mathbb{R}^{n})$ are defined as subspaces of $L^{\infty}(\mathbb{R}^{n})$ with the following finite norms (see e.g. \cite{dy09}, \cite{v85} and \cite[p26]{gl09}):
\begin{equation}\label{bjiao}
  \|f\|_{\Lambda_{\alpha}(\mathbb{R}^{n})}=\|f\|_{\infty}+\sup_{|t|\neq0,x\in\mathbb{R}^{n}}\frac{\Big|D^{[\alpha]+1}_{t}f(x)\Big|}{|t|^{\alpha}},
\end{equation}
where $D_t^{[\alpha]+1}f(x)
=
\sum_{r=0}^{[\alpha]+1}
(-1)^{[\alpha]+1-r}
\binom{[\alpha]+1}{r}
f(x+rt)$;

$$\|f\|_{\mathcal{L}_{\alpha,\Delta}(\mathbb{R}^{n})}=\|f\|_{\infty}+\sup_{x\in\mathbb{R}^{n},t>0}\frac{1}{|B(x,\sqrt{t})|^{\frac{\alpha}{n}}}\left(\frac{1}{|B(x,\sqrt{t})|}\int_{B(x,\sqrt{t})}|(I-e^{-t\Delta})^{[\alpha]+1}f(y)|^{2}dy\right)^{\frac{1}{2}};$$

$$\|f\|_{\mathcal{L}_{\alpha,\sqrt{\Delta}}(\mathbb{R}^{n})}=\|f\|_{\infty}+\sup_{x\in\mathbb{R}^{n},t>0}\frac{1}{|B(x,t)|^{\frac{\alpha}{n}}}\left(\frac{1}{|B(x,t)|}\int_{B(x,t)}|(I-e^{-t\sqrt{\Delta}})^{[\alpha]+1}f(y)|^{2}dy\right)^{\frac{1}{2}}.$$
For convenience, the homogeneous parts in $\|\cdot\|_{\mathcal{L}_{\alpha}(\mathbb{R}^{n})}$, $\|\cdot\|_{{\Lambda}_{\alpha}(\mathbb{R}^{n})}$, $\|\cdot\|_{\mathcal{L}_{\alpha,\Delta}(\mathbb{R}^{n})}$ and $\|\cdot\|_{\mathcal{L}_{\alpha,\sqrt{\Delta}}(\mathbb{R}^{n})}$ will be denoted as $\|\cdot\|_{\dot{\mathcal{L}}_{\alpha}(\mathbb{R}^{n})}$, $\|\cdot\|_{\dot{\Lambda}_{\alpha}(\mathbb{R}^{n})}$, $\|\cdot\|_{\dot{\mathcal{L}}_{\alpha,\Delta}(\mathbb{R}^{n})}$ and $\|\cdot\|_{\dot{\mathcal{L}}_{\alpha,\sqrt{\Delta}}(\mathbb{R}^{n})}$, respectively.

\begin{theorem}\label{c1c}
{\rm (i)} For all $\alpha>0$, the spaces $\mathcal{L}_{\alpha}(\mathbb{R}^{n})$,  $\mathcal{L}_{\frac{\alpha}{2}}(\mathcal{H})$, $\mathcal{L}_{\alpha,\Delta}(\mathbb{R}^{n})$, $\Lambda_{\alpha}(\mathbb{R}^{n})$, $\Lambda_{\frac{\alpha}{2}}(\mathcal{H})$, $\Lambda_{\alpha}(\mathcal{P})$, and $\mathcal{L}_{{\alpha}}(\mathcal{P})$ coincide with equivalent norms.

{\rm (ii)} When $0<\alpha<1/2$, all the above spaces are isomorphic to $\mathcal{L}_{\alpha,\sqrt{\Delta}}(\mathbb{R}^{n})$ with equivalent norms.

\end{theorem}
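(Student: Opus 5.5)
This theorem is a collection of equivalences, so I would prove it by chaining known classical results with Theorem \ref{e25}, and add one direct argument for part (ii).

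\textbf{Part (i).}
\begin{itemize}
\item[(a)] The classical facts recalled in the introduction give $\mathcal L_\alpha(\mathbb R^n)=\Lambda_\alpha(\mathbb R^n)$ for $\alpha>0$; see \cite[Theorem 5.30]{g85} and \cite{TW80}.
\item[(b)] Stein's characterization \cite[Section 4.2]{s70} gives $\Lambda_\alpha(\mathbb R^n)=\Lambda_\alpha(\mathcal P)$. A parallel result for the heat semigroup, with $t$ replaced by $\sqrt t$, gives $\Lambda_\alpha(\mathbb R^n)=\Lambda_{\alpha/2}(\mathcal H)$; see \cite[Theorem 6.3.7]{gl09} and \cite{t82}.
\item[(c)] The heat and Poisson semigroups are positive, unital, completely positive and analytic on $L^\infty(\mathbb R^n)$. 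Theorem \ref{e25} therefore yields $\mathcal L_\alpha(\mathcal P)=\Lambda_\alpha(\mathcal P)$ and $\mathcal L_{\alpha/2}(\mathcal H)=\Lambda_{\alpha/2}(\mathcal H)$.
\item[(d)] The equivalence $\mathcal L_{\alpha,\Delta}(\mathbb R^n)=\Lambda_\alpha(\mathbb R^n)$ is the Duong--Yan result \cite{ddy05,dy09}. I could also compare it directly with $\mathcal L_{\alpha/2}(\mathcal H)$. From the Gaussian lower bound $h_t(x-y)\gtrsim t^{-n/2}\mathbf 1_{B(x,\sqrt t)}(y)$, the local average of $|(I-H_t)^{[\alpha]+1}f|^2$ over $B(x,\sqrt t)$ is dominated by $H_t|(I-H_t)^{[\alpha]+1}f|^2(x)$. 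This gives $\|\cdot\|_{\mathcal L_{\alpha,\Delta}}\lesssim\|\cdot\|_{\mathcal L_{\alpha/2}(\mathcal H)}$. For the reverse inequality I would rely on the cited literature.
\end{itemize}

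\textbf{Part (ii).} One inclusion is easy. The kernel bound $p_t(x-y)\gtrsim t^{-n}\mathbf 1_{B(x,t)}(y)$ gives
\[
\|f\|_{\dot{\mathcal L}_{\alpha,\sqrt\Delta}(\mathbb R^n)}\lesssim\|f\|_{\dot{\mathcal L}_\alpha(\mathcal P)}.
\]

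For the converse, write $g_t=(I-P_t)^{[\alpha]+1}f$; since $0<\alpha<1/2$, this is $g_t=(I-P_t)f$. Decompose $\mathbb R^n$ into the ball $B(x,t)$ and the annuli $2^{j+1}B\setminus 2^jB$. The tail estimate $p_t(x-y)\lesssim t\,(2^jt)^{-n-1}$ on the $j$-th annulus then gives
\[
P_t|g_t|^2(x)\lesssim\sum_{j\ge0}2^{-j}\,\frac{1}{|2^{j}B|}\int_{2^{j}B}|g_t|^2 .
\]
The difficulty is that the average over $2^jB$ involves $g_t$ at the smaller scale $t$ rather than at the natural scale $2^jt$. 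My plan is to write $(I-P_t)f=(I-P_{2^jt})f-P_t(I-P_{2^jt-t})f$ and iterate, or alternatively to cover $2^jB$ by roughly $2^{jn}$ balls of radius $t$. The covering route bounds the average over $2^jB$ by the supremum of averages over radius-$t$ balls, and so by $t^{2\alpha}\|f\|^2_{\dot{\mathcal L}_{\alpha,\sqrt\Delta}}$, with no loss in $j$. The weights $2^{-j}$ are then summable.

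\textbf{Main obstacle.} I expect the hardest step to be the converse in part (ii): the slow decay of the Poisson tail has to be controlled by the $\alpha$-growth. The condition $\alpha<1/2$ should enter when this tail is summed against the square-function normalization $2\alpha<1$, where the Poisson decay rate is $1$. If the covering argument goes through without loss, that is where I expect the restriction to show up.
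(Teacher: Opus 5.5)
Your proposal is correct, but both parts take a different route from the paper's proof.

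In (i), the paper reaches $\mathcal L_{\alpha/2}(\mathcal H)$ through the Duong--Yan comparison of $\mathcal L_{\alpha,\Delta}(\mathbb R^n)$ with $\sup_{t>0}t^{-\alpha/2}\|H_t|(I-H_t)^{[\alpha]+1}f|^2\|_\infty^{1/2}$. It then invokes the self-improving Theorem \ref{26} to lower the exponent from $[\alpha]+1$ to $[\frac{\alpha}{2}]+1$, and it uses Theorem \ref{e25} only for $\mathcal P$. You apply Theorem \ref{e25} to $\mathcal H$ as well. This is legitimate: complex-time Gaussian kernels have uniformly bounded $L^1$ norms on sectors, so $\mathcal H$ is analytic on $L^\infty(\mathbb R^n)$. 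You then connect through the classical space $\Lambda_{\alpha/2}(\mathcal H)$, which avoids Theorem \ref{26} and the exponent mismatch entirely.

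Your aside in (d) does run into that mismatch. The Gaussian lower bound only dominates $\|f\|_{\dot{\mathcal L}_{\alpha,\Delta}(\mathbb R^n)}$ by $\sup_{t>0}t^{-\alpha/2}\|H_t|(I-H_t)^{[\alpha]+1}f|^2\|_\infty^{1/2}$. To reach $\mathcal L_{\alpha/2}(\mathcal H)$ you need the elementary fact that raising the exponent can only decrease this seminorm, up to a constant: write $(I-H_t)^{k}=(I-H_t)^{k-m}(I-H_t)^{m}$, expand the first factor, and use Kadison--Schwarz. The reverse inequality, which you leave to the literature, follows from the same tail-plus-covering argument as in your part (ii), combined with Theorem \ref{26}.

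In (ii), the paper simply quotes \cite[Proposition 2.4]{dy09} for \eqref{dy}, whereas you prove it, and your covering argument is sound. The same $t$ appears in $g_t$ and as the radius, so covering $2^jB$ by $O(2^{jn})$ balls of radius $t$ costs nothing. This gives $\|P_t|g_t|^2\|_\infty\lesssim_n\sup_{z}|B(z,t)|^{-1}\int_{B(z,t)}|g_t|^2$, with the $\alpha$-independent series $\sum_j 2^{-j}$.

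Your ``main obstacle'' paragraph is therefore misplaced. Nothing in your argument uses $\alpha<1/2$. With $g_t=(I-P_t)^{[\alpha]+1}f$, the same argument proves \eqref{dy} for every $\alpha>0$. Together with (i), this gives part (ii) for all $\alpha>0$, which is exactly the extension that Remark \ref{zz} says the authors could not obtain.

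Restrictions on $\alpha$ arise only in the classical arguments of Remark \ref{zz} and Proposition \ref{c2c}. Those compare with averages $f_B$, whose oscillation over $2^jB$ grows like $(2^jt)^{\alpha}$ and must be beaten by the Poisson tail factor $2^{-j}$; in your formulation nothing grows. So for (ii) your approach is self-contained and strictly more general, while the paper's is shorter but inherits the restriction from the citation.
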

\begin{proof}
(i) Let $\alpha>0$ and $f\in L_\infty(\mathbb R^n)$. As concluded from \cite[Theorem 3.4]{ddy05}, \cite[Proposition 2.4]{dy09} and our Theorem \ref{26}, one has
\begin{equation}\label{pt1}
\begin{aligned}
  \|f\|_{\mathcal{\dot{L}}_{\alpha}(\mathbb{R}^{n})}\simeq&\|f\|_{\mathcal{\dot{L}}_{\alpha,\Delta}(\mathbb{R}^{n})}
\\ \simeq& \sup_{t>0}\frac{1}{t^{\frac{\alpha}{2}}}\left\|e^{-t\Delta}\left|(I-e^{-t\Delta})^{[\alpha]+1}f\right|^{2}\right\|^{1/2}
\\  \simeq& \sup_{t>0}\frac{1}{t^{\frac{\alpha}{2}}}\left\|e^{-t\Delta}\left|(I-e^{-t\Delta})^{[\frac{\alpha}{2}]+1}f\right|^{2}\right\|^{1/2}
\\ \simeq&\|f\|_{\mathcal{\dot{L}}_{\frac{\alpha}{2}}(\mathcal{H})}.
\end{aligned}
\end{equation}
Secondly, one can derive from \cite{TW80}, \cite[Theorem 6.3.7]{gl09} and \cite[Corollary 3 (i)]{t82} that
\begin{equation}\label{pt2}
  \|f\|_{\mathcal{\dot{L}}_{\alpha}(\mathbb{R}^{n})}\simeq \|f\|_{\dot{\Lambda}_{\alpha}(\mathbb{R}^{n})}\simeq\|f\|_{\dot{\Lambda}_{\frac{\alpha}{2}}(\mathcal{H})}\simeq\|f\|_{\dot{\Lambda}_{\alpha}(\mathcal{P})}.
\end{equation}
Finally, our main result---Theorem \ref{e25} gives
\begin{equation}\label{pt3}
\|f\|_{\dot{\Lambda}_{\alpha}(\mathcal{P})}\simeq \|f\|_{\dot{\mathcal L}_{\alpha}(\mathcal{P})}.
\end{equation}
 Integrating \eqref{pt1}, \eqref{pt2}, and \eqref{pt3} leads to the desired assertion.
\par
(ii) This assertion follows from \eqref{pt3} and \cite[Proposition 2.4]{dy09}, where Duong and Yan obtained for $0<\alpha<1/2$,
\begin{equation}\label{dy}
\|\cdot\|_{\mathcal{\dot{L}}_{\alpha,\sqrt{\Delta}}(\mathbb{R}^{n})}\simeq\|\cdot\|_{\mathcal{\dot{L}}_{\alpha}(\mathcal{P})}.\end{equation}
\end{proof}

\begin{remark}\label{zz}
{\rm At the moment of writing, we have no idea how to extend Theorem \ref{c1c} (ii) or \eqref{dy} to all $\alpha\geq1/2$. Below, by making use of \eqref{pt2} and \eqref{dy}, we provide a classical proof of (ii) and \eqref{pt3} based on kernel estimates or properties of Poisson kernels. The proof will be finished by showing for $f\in L_\infty(\mathbb R^n)$,
\begin{equation}\label{dy1}
\|f\|_{\mathcal{\dot{L}}_{\alpha,\sqrt{\Delta}}(\mathbb{R}^{n})}\lesssim \|f\|_{\mathcal{\dot{L}}_{\alpha}(\mathbb{R}^{n})},
\end{equation}

\begin{equation}\label{dy2}
\|f\|_{\dot{\Lambda}_{\alpha}(\mathbb{R}^{n})}\lesssim\|f\|_{\mathcal{\dot{L}}_{\alpha}(\mathcal{P})}.
\end{equation}

Fix $f\in L_\infty(\mathbb R^n)$. We first show \eqref{dy1}. Let $B=B(x_{0},t_{B})$ be a fixed ball centered at $x_{0}$ and of radius $t_{B}$, and $f_{B}=\frac{1}{|B|}\int_{B}f(y)dy$. The Markovian property
\begin{equation}\label{lL}
  \int_{\mathbb{R}^{n}}p_{t}(y)dy=1
\end{equation}
 gives for any $x\in\mathbb R^n$
$$|f(x)-P_{t_{B}}f(x)|\leq |f(x)-f_{B}|+|P_{t_{B}}f(x)-f_{B}|=|f(x)-f_{B}|+|P_{t_{B}}(f-f_{B})(x)|.$$
Fix $x\in B$, we apply the H\"older inequality to get for $0<\alpha<1/2$,
\begin{equation*}
\begin{aligned}
 |P_{t_{B}}(f-f_{B})(x)|\leq & \int_{\mathbb{R}^{n}}\left|\frac{c_{n}t_{B}}{(t_{B}^{2}+|x-y|^{2})^{\frac{n+1}{2}}}\right||f(y)-f_{B}|dy  \\
  = & \int_{\mathbb{R}^{n}}\left(\frac{t_{B}}{(t_{B}^{2}+|x-y|^{2})^{\frac{n}{4}+\frac{1}{2}}}|f(y)-f_{B}|\right)
  \left(\frac{(t_{B}^{2}+|x-y|^{2})^{\frac{n}{4}+\frac{1}{2}}}{t_{B}}\left|\frac{c_{n}t_{B}}{(t_{B}^{2}+|x-y|^{2})^{\frac{n+1}{2}}}\right|\right)dy
\\ \leq&
\left(\int_{\mathbb{R}^{n}}\left|\frac{t_{B}^{2}}{(t_{B}^{2}+|x-y|^{2})^{\frac{n}{2}+1}}\right||f(y)-f_{B}|^{2}dy\right)^{\frac{1}{2}}
 \left(\int_{\mathbb{R}^{n}} \frac{1}{(t_{B}^{2}+|x-y|^{2})^{\frac{n}{2}}}dy\right)^{\frac{1}{2}}
\\ \lesssim&\left(\int_{\mathbb{R}^{n}}\left|\frac{t_{B}^{2}}{(t_{B}^{2}+|x-y|^{2})^{\frac{n}{2}+1}}\right||f(y)-f_{B}|^{2}dy\right)^{\frac{1}{2}}
\\ \leq&\left(\frac{1}{t_{B}^{n}}\int_{|x-y|\leq t_{B}}|f(y)-f_{B}|^{2}dy+ \sum_{k=1}^{\infty}2^{-2k}\frac{1}{(2^{k}t_{B})^{n}}\int_{2^{k-1}t_{B}\leq|x-y|\leq 2^{k}t_{B}}|f(y)-f_{B}|^{2}dy \right)^{\frac{1}{2}}
\\ \leq&\left(\frac{1}{t_{B}^{n}}\int_{|x-y|\leq t_{B}}|f(y)-f_{B}|^{2}dy+ \sum_{k=1}^{\infty}2^{-2k}\frac{1}{(2^{k}t_{B})^{n}}\int_{|x-y|\leq 2^{k}t_{B}}|f(y)-f_{B}|^{2}dy \right)^{\frac{1}{2}}
\\ \lesssim&\big(1+\sum_{k=1}^{\infty}2^{-2k(1-\alpha)}\big)t_{B}^{\alpha}\|f\|_{\dot{\mathcal{L}}_{\alpha}(\mathbb{R}^{n})}.
\end{aligned}
\end{equation*}
Therefore
\begin{equation*}
  \left(\frac{1}{|B|}\int_{B} |f(x)-P_{t_{B}}f(x)|^{2}dx\right)^{\frac{1}{2}}\lesssim\left(\frac{1}{|B|}\int_{B} |f(x)-f_{B}|^{2}dx+\frac{1}{|B|}\int_{B}|P_{t_{B}}(f-f_{B})(x)|^{2}dx\right)^{\frac{1}{2}}
 \lesssim_{\alpha}  t_{B}^{\alpha}\|f\|_{\mathcal{L}_{\alpha}(\mathbb{R}^{n})}.
\end{equation*}
Dividing both sides of the above inequality by $ t_{B}^{\alpha}$, and taking the supremum over $t>0$ on the left-hand side, then the desired result \eqref{dy1} follows readily.

Now we prove \eqref{dy2}. Note that for $0<\alpha<1$,
\begin{equation*}
\begin{aligned}
  t^{1-\alpha}\left|\frac{\partial P_{t}f(x)}{\partial t}\right|\leq &t^{1-\alpha}\left|\frac{\partial P_{t}}{\partial t}(f-P_{t}f)(x)\right|+t^{1-\alpha}\left|\frac{\partial P_{t}}{\partial t}P_{t}f(x)\right| \\
 =& t^{1-\alpha}\left|\int_{\mathbb{R}^{n}}\frac{\partial p_{t}}{\partial t}(x-y)(f(y)-P_{t}f(y))dy\right| +t^{1-\alpha}\left|\left.\frac{\partial P_{s}f(x)}{\partial s}\right|_{s=2t}\right|\\
 \leq &  t^{1-\alpha}\left(\int_{\mathbb{R}^{n}}\left|\frac{\partial p_{t}}{\partial t}(x-y)\right|dy \int_{\mathbb{R}^{n}}\left|\frac{\partial p_{t}}{\partial t}(x-y)\right||f(y)-P_{t}f(y)|^{2}dy\right)^{\frac{1}{2}}
 \\&+2^{\alpha-1}(2t)^{1-\alpha}\left|\left.\frac{\partial P_{s}f(x)}{\partial s}\right|_{s=2t}\right|.
 \end{aligned}
\end{equation*}
Since $$\int_{\mathbb{R}^{n}}\left|\frac{\partial p_{t}}{\partial t}(y)\right|dy\lesssim\frac{1}{t}\quad\text{and}\quad
\left|\frac{\partial p_{t}}{\partial t}(y)\right|\lesssim\frac{1}{t}p_{t}(y),$$
we then get
\begin{equation*}
\begin{aligned}
  t^{1-\alpha}\left|\frac{\partial P_{t}f(x)}{\partial t}\right|\lesssim & t^{-\alpha}\left(\int_{\mathbb{R}^{n}}\left|p_{t}(x-y)|f(y)-P_{t}f(y)\right|^{2}dy\right)^{\frac{1}{2}} +2^{\alpha-1}\sup_{t>0}t^{1-\alpha}\left\|\frac{\partial P_{t}f}{\partial t}\right\|_{\infty} \\
  \leq & \|f\|_{\mathcal{\dot{L}}_{\alpha}(\mathcal{P})} +2^{\alpha-1}\sup_{t>0}t^{1-\alpha}\left\|\frac{\partial P_{t}f}{\partial t}\right\|_{\infty}.
\end{aligned}
\end{equation*}
Taking the supremum over $x\in\mathbb{R}^{n}$ and $t>0$ on the left-hand side, and using the fact that $2^{\alpha-1}<1$ when $0<\alpha<1$, we obtain
$$\|f\|_{\dot{\Lambda}_{\alpha}(\mathcal{P})}\lesssim\|f\|_{\mathcal{\dot{L}}_{\alpha}(\mathcal{P})}.$$}
\end{remark}

\bigskip

Now we are going to show the desired equivalences between the spaces $\mathcal{L}_{\alpha}(\mathcal{P})$, $\mathcal{L}_{\alpha}(\mathcal{H})$, and their little versions (cf. \cite[Section 6]{HJ24}).   Let $0<\alpha<1$. The spaces $\mathcal{\ell}_{\alpha}(\mathcal{P})$ and $\mathcal{\ell}_{\alpha}(\mathcal{H})$ are defined respectively as the subspace of $L_\infty(\mathbb R^n)$ with finite norms
\begin{equation}\label{little1}
\|f\|_{\mathcal{\ell}_{\alpha}(\mathcal{P})}=\|f\|_{\infty}+\sup_{t>0}\frac{1}{t^{\alpha}}\left\|e^{-t\sqrt{\Delta}}|f|^{2}-|e^{-t\sqrt{\Delta}}f|^{2}\right\|^{\frac{1}{2}}_{\infty},
\end{equation}
\begin{equation}\label{little2}
\|f\|_{\mathcal{\ell}_{\alpha}(\mathcal{H})}=\|f\|_{\infty}+\sup_{t>0}\frac{1}{t^{\alpha}}\left\|e^{-t\Delta}|f|^{2}-|e^{-t\Delta}f|^{2}\right\|^{\frac{1}{2}}_{\infty}.
\end{equation}
Similarly, one can denote the homogeneous part of $\|\cdot\|_{\mathcal{\ell}_{\alpha}(\mathcal{P})}$ and $\|\cdot\|_{\mathcal{\ell}_{\alpha}(\mathcal{H})}$ by  $\|\cdot\|_{\dot{\mathcal{\ell}}_{\alpha}(\mathcal{P})}$ and $\|\cdot\|_{\dot{\mathcal{\ell}}_{\alpha}(\mathcal{H})}$, respectively.
\begin{proposition}\label{c2c}
Let  $(H_{t})_{t>0}$ and $(P_{t})_{t>0}$ be as above. For $0<\alpha<1/2$, the following equivalences hold:
\begin{itemize}
  \item [(i)] $\mathcal{\ell}_{\alpha}(\mathcal{P})\simeq\mathcal{L}_{\alpha}(\mathcal{P});$
  \item [(ii)] $\mathcal{\ell}_{\alpha}(\mathcal{H})\simeq\mathcal{L}_{\alpha}(\mathcal{H}).$
\end{itemize}
\end{proposition}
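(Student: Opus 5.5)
The plan is to identify the homogeneous part of the little norm with a pointwise variance and compare it with the Campanato quantity. Both differ only by a term controlled by the classical Lipschitz seminorm. Theorem \ref{c1c}(i) then closes the argument.

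\emph{Variance identity and the two-sided comparison.} Since $\int p_t=1$, expanding the square at a fixed $x$ gives
\[
P_t|f|^2(x)-|P_tf(x)|^2=\int_{\mathbb R^n}p_t(x-y)\,|f(y)-P_tf(x)|^2\,dy .
\]
By contrast, the Campanato quantity (with $[\alpha]+1=1$) is
\[
P_t|(I-P_t)f|^2(x)=\int p_t(x-y)\,|f(y)-P_tf(y)|^2\,dy .
\]
Writing $f(y)-P_tf(x)=\bigl(f(y)-P_tf(y)\bigr)+\bigl(P_tf(y)-P_tf(x)\bigr)$, and the reverse decomposition, the elementary inequality $|a+b|^2\le 2|a|^2+2|b|^2$ shows that each of the two quantities is at most twice the other plus
\[
E_t(x):=2\int p_t(x-y)\,|P_tf(y)-P_tf(x)|^2\,dy .
\]
It therefore suffices to show $E_t(x)\lesssim t^{2\alpha}\|f\|^2$ for the relevant homogeneous norm, since then
\[
\|f\|_{\dot{\ell}_\alpha(\mathcal P)}\lesssim \|f\|_{\dot{\mathcal L}_\alpha(\mathcal P)}+\|f\|_{\dot\Lambda_\alpha(\mathbb R^n)}
\quad\text{and}\quad
\|f\|_{\dot{\mathcal L}_\alpha(\mathcal P)}\lesssim \|f\|_{\dot{\ell}_\alpha(\mathcal P)}+\|f\|_{\dot\Lambda_\alpha(\mathbb R^n)} .
\]

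\emph{Bounding $E_t$ for (i).} For $0<\alpha<1$, the difference norm \eqref{bjiao} controls first differences. Moreover, $P_t$ is convolution with a probability kernel, so $|P_tf(y)-P_tf(x)|\le |x-y|^\alpha\|f\|_{\dot\Lambda_\alpha(\mathbb R^n)}$. Hence
\[
E_t(x)\lesssim \|f\|_{\dot\Lambda_\alpha(\mathbb R^n)}^2\int p_t(z)|z|^{2\alpha}\,dz=c\,t^{2\alpha}\|f\|_{\dot\Lambda_\alpha(\mathbb R^n)}^2 .
\]
This moment of the Poisson kernel \eqref{pkou} is finite exactly when $2\alpha<1$, and this is the main obstacle: the only point where $\alpha<1/2$ is used. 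Theorem \ref{c1c}(i) gives $\|f\|_{\dot\Lambda_\alpha(\mathbb R^n)}\simeq\|f\|_{\dot{\mathcal L}_\alpha(\mathcal P)}$, which settles the direction $\ell_\alpha(\mathcal P)\supseteq\mathcal L_\alpha(\mathcal P)$. For the converse, I need $\|f\|_{\dot\Lambda_\alpha(\mathbb R^n)}\lesssim\|f\|_{\dot\ell_\alpha(\mathcal P)}$, which I will obtain by going through $\dot\Lambda_\alpha(\mathcal P)$. Starting from $P_t f=P_{t/2}P_{t/2}f$, I would write
\[
\partial_tP_tf(x)=\int \partial_tp_{t/2}(x-y)\bigl(P_{t/2}f(y)-P_{t/2}f(x)\bigr)\,dy ,
\]
which holds because $\int\partial_t p=0$. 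Note, however, that this is not the same as the $\ell$-variance, whose integrand is $f(y)-P_tf(x)$ rather than $P_{t/2}f(y)-P_{t/2}f(x)$. So I would instead follow the Remark after Theorem \ref{c1c}: use $\partial_tP_tf(x)=\int\partial_tp_t(x-y)\bigl(f(y)-P_tf(x)\bigr)\,dy$ together with $|\partial_tp_t|\lesssim t^{-1}p_t$ and Cauchy--Schwarz. This gives $t^{1-\alpha}|\partial_tP_tf(x)|\lesssim\|f\|_{\dot\ell_\alpha(\mathcal P)}$ directly, with no absorption term needed. Theorem \ref{c1c}(i) then transfers this bound to $\dot\Lambda_\alpha(\mathbb R^n)$.

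\emph{Part (ii).} The argument is identical, with the Gaussian kernel $h_t$ in place of $p_t$. Now $\mathcal L_\alpha(\mathcal H)$ corresponds by Theorem \ref{c1c}(i) to $\Lambda_{2\alpha}(\mathbb R^n)$, because of the $\alpha/2$ scaling. Since $2\alpha<1$, this space consists of Hölder functions of order $2\alpha$, and $\int h_t(z)|z|^{4\alpha}\,dz\simeq t^{2\alpha}$ is always finite. The converse direction again uses $|\partial_th_t|\lesssim t^{-1}h_{2t}$ and Cauchy--Schwarz. Here a doubling of the time parameter is harmless, since $\|\cdot\|_{\dot\ell_\alpha}$ at time $2t$ is comparable to $t^\alpha$ times the norm.
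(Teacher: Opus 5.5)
Your proof is correct, but it is organized differently from the paper's.

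\textbf{How the paper argues.} The paper proves the two inequalities by unrelated means.

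For $\|f\|_{\dot{\mathcal L}_\alpha(\mathcal P)}\lesssim\|f\|_{\dot\ell_\alpha(\mathcal P)}$ it uses your splitting; its $B^2$ is your $E_t/2$. It bounds that term intrinsically: Cauchy--Schwarz, $p_t*p_t=p_{2t}$ and the pointwise domination $p_{2t}\le 2p_t$ give $B\le\sqrt2\,t^{\alpha}\|f\|_{\dot\ell_\alpha(\mathcal P)}$. This needs no regularity of $f$ and no use of $\alpha<1/2$.

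For the reverse inequality it replaces $P_tf(x)$ by the ball average $f_{B(x,t)}$ and sums over dyadic annuli. This is where $\sum_k 2^{k(2\alpha-1)}<\infty$, i.e.\ $\alpha<1/2$, enters. Only then does it invoke Theorem \ref{c1c} to pass from the classical Campanato norm to $\mathcal L_\alpha(\mathcal P)$.

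\textbf{How your route differs.} You run one symmetric comparison. You control the cross term in both directions by the H\"older seminorm, through the moment $\int p_1(z)|z|^{2\alpha}\,dz$; this is the continuous counterpart of the paper's dyadic series. You obtain the H\"older bound from the little norm via $|\partial_tp_t|\lesssim t^{-1}p_t$ and Theorem \ref{c1c}(i).

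\textbf{What each route buys.} The paper's route gives a self-contained semigroup proof of one direction, valid for all $0<\alpha<1$. Yours leans on the classical characterizations in both directions, but it treats (i) and (ii) uniformly, and this pays off in (ii). A literal repetition of the paper's argument from (i) would need $h_{2t}\lesssim h_t$. That fails for the Gaussian kernel, since $h_{2t}/h_t=2^{-n/2}e^{|x|^2/(8t)}$. Your derivative bound $|\partial_th_t|\lesssim t^{-1}h_{2t}$ needs nothing of the sort.

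Note also a shortcut: once you have $\|f\|_{\dot\Lambda_\alpha(\mathcal P)}\lesssim\|f\|_{\dot\ell_\alpha(\mathcal P)}$, Theorem \ref{e25} yields $\|f\|_{\dot{\mathcal L}_\alpha(\mathcal P)}\lesssim\|f\|_{\dot\ell_\alpha(\mathcal P)}$ immediately. This avoids $E_t$ and the restriction $\alpha<1/2$ in that direction.

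\textbf{One step in (ii) to write out.} Cauchy--Schwarz against $|\partial_th_t|\lesssim t^{-1}h_{2t}$ produces $\int h_{2t}(x-y)|f(y)-H_tf(x)|^2\,dy$. This is not the time-$2t$ variance, because its centre is $H_tf(x)$. Since $\int\partial_th_t=0$, subtract the constant $H_{2t}f(x)$ instead of $H_tf(x)$. Then
$|\partial_tH_tf(x)|^2\lesssim t^{-2}\bigl(H_{2t}|f|^2(x)-|H_{2t}f(x)|^2\bigr)\le t^{-2}(2t)^{2\alpha}\|f\|_{\dot\ell_\alpha(\mathcal H)}^2$,
which is exactly what your remark about doubling the time parameter requires.
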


\begin{proof}
(i) Using the triangle inequality, we have
\begin{equation}\label{diyi}
\begin{aligned}
  \left|e^{-t\sqrt{\Delta}}|f-e^{-t\sqrt{\Delta}}f|^{2}(x)\right|^{\frac{1}{2}}=& \left|\int_{\mathbb{R}^{n}}p_{t}(x-y)|f(y)-e^{-t\sqrt{\Delta}}f(y)dy|^{2}\right|^{\frac{1}{2}} \\
 \leq& \left|\int_{\mathbb{R}^{n}}p_{t}(x-y)|f(y)-e^{-t\sqrt{\Delta}}f(x)|^{2}dy\right|^{\frac{1}{2}}
\\&+ \left|\int_{\mathbb{R}^{n}}p_{t}(x-y)|e^{-t\sqrt{\Delta}}f(y)-e^{-t\sqrt{\Delta}}f(x)|^{2}dy\right|^{\frac{1}{2}}\\
=:&  A+B.
\end{aligned}
\end{equation}
By the Markovian property  \eqref{lL},
\begin{equation}\label{pp1}
  A=\left|e^{-t\sqrt{\Delta}}|f|^{2}(x)-|e^{-t\sqrt{\Delta}}f(x)|^{2}\right|^{\frac{1}{2}}\leq t^{\alpha}\|f\|_{\dot{\mathcal{\ell}}_{\alpha}(\mathcal{P})}.
\end{equation}

For $B$, by the Cauchy-Schwarz inequality and \eqref{lL} again, one has
\begin{equation*}
\begin{aligned}
  |P_{t}f(y)-P_{t}f(x)|^{2}= &\left|\int_{\mathbb{R}^{n}}p_{t}(y-z)(f(z)-e^{-t\sqrt{\Delta}}f(x))dz\right|^{2} \\
  \leq & \int_{\mathbb{R}^{n}}p_{t}(y-z)dz\int_{\mathbb{R}^{n}}p_{t}(y-z)|f(z)-e^{-t\sqrt{\Delta}}f(x)|^{2}dz \\
  =&\int_{\mathbb{R}^{n}}p_{t}(y-z)|f(z)-e^{-t\sqrt{\Delta}}f(x)|^{2}dz.
 \end{aligned}
\end{equation*}
One then gets by the contraction and Markovian property as well as the kernel estimate $p_{2t}\leq 2p_t$,
\begin{equation*}
\begin{aligned}
  B\leq & \left|\int_{\mathbb{R}^{n}}p_{t}(x-y)\int_{\mathbb{R}^{n}}p_{t}(y-z)|f(z)-e^{-t\sqrt{\Delta}}f(x)|^{2}dzdy\right|^{\frac{1}{2}} \\
  =& \left|\int_{\mathbb{R}^{n}}p_{2t}(x-z)|f(z)-e^{-t\sqrt{\Delta}}f(x)|^{2}dz\right|^{\frac{1}{2}} \\
   \leq& \left|2\int_{\mathbb{R}^{n}}p_{t}(x-z)|f(z)-e^{-t\sqrt{\Delta}}f(x)|^{2}dz\right|^{\frac{1}{2}}\\
  \leq& \sqrt{2}t^{\alpha}\|f\|_{\mathcal{\ell}_{\alpha}(\mathcal{P})}.
 \end{aligned}
\end{equation*}
Combining the estimates of the terms $A$ and $B$, and then taking the supremum over $t>0$ on the left-hand side of \eqref{diyi}, one gets
\begin{equation}\label{poi1}
\|f\|_{\mathcal{L}_{\alpha}(\mathcal{P})}= \sup_{t>0}\frac{1}{t^{\alpha}}\left\|P_{t}|f-P_{t}f|^{2}\right\|^{\frac{1}{2}}_{\infty}\leq (\sqrt{2}+2)\|f\|_{\mathcal{\ell}_{\alpha}(\mathcal{P})}.
\end{equation}

For the converse direction, by \eqref{pp1}, one has
$$\|f\|_{\dot{\mathcal{\ell}}_{\alpha}(\mathcal{P})}=\sup_{t>0,x\in\mathbb{R}^{n}}\frac{1}{t^{\alpha}}\left|P_{t}|f|^{2}(x)-|P_{t}f(x)|^{2}\right|^{\frac{1}{2}}
=\sup_{t>0,x\in\mathbb{R}^{n}}\frac{1}{t^{\alpha}}\left|\int_{\mathbb{R}^{n}}p_{t}(x-y)|f(y)-P_{t}f(x)|^{2}dy\right|^{\frac{1}{2}}.$$
Let $B_{t}=B(x,t)$ and $f_{B_{t}}=\frac{1}{|B_{t}|}\int_{B_{t}}f(y)dy$. We then use the fact that the series $\sum_{k=0}^{\infty}2^{k(2\alpha-1)}$ is convergent when $0<\alpha<\frac{1}{2}$ to get
\begin{equation}\label{poi2}
\begin{aligned}
 &\int_{\mathbb{R}^{n}}p_{t}(x-y)|f(y)-f_{B_{t}}|^{2}dy & \\=&\int_{B_{t}}\frac{c_{n}t}{(t^{2}+|x-y|^{2})^{\frac{n+1}{2}}}|f(y)-f_{B_{t}}|^{2}dy \\
  +&\sum_{k=1}^{\infty}\int_{2^{k}B_{t}\setminus2^{k-1}B_{t}}\frac{c_{n}t}{(t^{2}+|x-y|^{2})^{\frac{n+1}{2}}}|f(y)-f_{B_{t}}|^{2}dy \\
\lesssim & \frac{1}{t^{n}}\int_{B_{t}}|f(y)-f_{B_{t}}|^{2}dy
+2^{n+1}\sum_{k=1}^{\infty}2^{-k}\frac{1}{(2^{k}t)^{n}}\int_{2^{k}B_{t}}|f(y)-f_{B_{t}}|^{2}dy\\
\lesssim&(1+2^{n+1}\sum_{k=1}^{\infty}2^{k(2\alpha-1)})t^{2\alpha}\|f\|_{\mathcal{\dot{L}}_{\alpha}(\mathbb{R}^{n})}\\ \lesssim& t^{2\alpha}\|f\|_{\mathcal{\dot{L}}_{\alpha}(\mathbb{R}^{n})}^{2}.
\end{aligned}
\end{equation}
By making use of Theorem \ref{c1c}, one deduces that
\begin{equation}\label{pp2}
\begin{aligned}
 &\int_{\mathbb{R}^{n}}p_{t}(x-y)|f(y)-P_{t}f(x)|^{2}dy\\ \leq&2\int_{\mathbb{R}^{n}}p_{t}(x-y)|f(y)-f_{B_{t}}|^{2}dy
  +2\int_{\mathbb{R}^{n}}p_{t}(x-y)|f_{B_{t}}-P_{t}f(x)|^{2}dy\\
  \eqref{lL}=&2\int_{\mathbb{R}^{n}}p_{t}(x-y)|f(y)-f_{B_{t}}|^{2}dy+2|f_{B_{t}}-P_{t}f(x)|^{2}\\
  \lesssim & t^{2\alpha}\|f\|_{\mathcal{\dot{L}}_{\alpha}(\mathbb{R}^{n})}^{2}
  \\ \simeq& t^{2\alpha}\|f\|_{\mathcal{\dot{L}}_{\alpha}(\mathcal P)}^{2},
\end{aligned}
\end{equation}
since
\begin{equation*}
\begin{aligned}
  |f_{B_{t}}-P_{t}f(x)|^{2}= & |f_{B_{t}}- \int_{\mathbb{R}^{n}}p_{t}(x-y)f(y)dy|^{2} \\
\eqref{lL}= &|\int_{\mathbb{R}^{n}}p_{t}(x-y)(f(y)-f_{B_{t}})dy|^{2}
  \\ \leq&\int_{\mathbb{R}^{n}}p_{t}(x-y)|f(y)-f_{B_{t}}|^{2}dy.
\end{aligned}
\end{equation*}
Dividing both sides of \eqref{pp2} by $t^{2\alpha}$, and then taking the supremum over $t>0$ on the left-hand side of \eqref{pp2}, one has
\begin{equation}\label{pp3}
  \|f\|_{\dot{\mathcal{\ell}}_{\alpha}(\mathcal{P})}\lesssim \|f\|_{\mathcal{\dot{L}}_{\alpha}(\mathcal P)}.
\end{equation}
Therefore, integrating \eqref{poi1} with \eqref{pp3} yields (i).

(ii) Repeating the argument in (i), with $B_t=B(x,t)$ replaced by $B_{\sqrt{t}}=B(x,\sqrt{t})$ and \eqref{poi2} replaced by the following inequality,
\begin{equation*}
\begin{aligned}
  \int_{\mathbb{R}^{n}}h_{t}(x-y)|f(y)-f_{B_{\sqrt{t}}}|^{2}dy \lesssim &\int_{B_{\sqrt{t}}}t^{-\frac{n}{2}}\left(1+\frac{|x-y|^{2}}{t}\right)^{-(n+2\alpha+1)}|f(y)-f_{B_{\sqrt{t}}}|^{2}dy \\
  \\&+\sum_{k=1}^{\infty}\int_{2^{k}B_{\sqrt{t}}\setminus2^{k-1}B_{\sqrt{t}}}t^{-\frac{n}{2}}\left(1+\frac{|x-y|^{2}}{t}\right)^{-(n+2\alpha+1)}|f(y)-f_{B_{\sqrt{t}}}|^{2}dy \\
\lesssim & \frac{1}{\sqrt{t}^{n}}\int_{B_{\sqrt{t}}}|f(y)-f_{B_{\sqrt{t}}}|^{2}dy
\\&+2^{2(n+2\alpha+1)}\sum_{k=1}^{\infty}2^{-k(n+4\alpha+2)}\frac{1}{(2^{k}\sqrt{t})^{n}}\int_{2^{k}B_{\sqrt{t}}}|f(y)-f_{B_{\sqrt{t}}}|^{2}dy\\
\lesssim&(1+2^{2(n+2\alpha+1)}\sum_{k=1}^{\infty}2^{-k(n+2)})t^{2\alpha}\|f\|_{\mathcal{\dot{L}}_{2\alpha}(\mathbb{R}^{n})}^{2}
\\ \lesssim&_{n,\alpha}t^{2\alpha}\|f\|_{\mathcal{\dot{L}}_{2\alpha}(\mathbb{R}^{n})}^{2}
\\ \simeq& t^{2\alpha}\|f\|_{\mathcal{\dot{L}}_{\alpha}(\mathcal{H})}^{2}.
\end{aligned}
\end{equation*}
\end{proof}

\section{The case of Ornstein-Uhlenbeck semigroups}
In this part, we discuss the Gauss Campanato spaces associated with Ornstein-Uhlenbeck semigroups. Let $-L=\frac{1}{2}\sum_{i=1}^{n}\frac{\partial^{2}}{\partial x_{i}^{2}} - \sum_{i=1}^{n}x_i\cdot\frac{\partial}{\partial x_{i}}$ on $\bigl(\mathbb R^n,d\gamma\bigr)$ with $d\gamma=\pi^{-n/2}e^{-|x|^2}dx$, which is slightly different from the Gaussian measure  $(2\pi)^{-n/2}e^{-\frac{|x|^2}{2}}dx$ commonly used in probability theory.
This operator defined above is the infinitesimal generator of the Ornstein-Uhlenbeck semigroup $O_{t}=e^{-tL}$ $(t>0)$, given by
$$
O_t f(x)
=\frac{1}{\pi^{n / 2}\left(1-e^{-2 t}\right)^{n / 2}}\int_{\mathbb R^n}e^{-\frac{\left|y-e^{-t} x\right|^2}{1-e^{-2 t}}}f(y)dy.
$$
The Ornstein-Uhlenbeck semigroup $(O_{t})_{t>0}$ is strongly continuous on $L^{1}(\mathbb{R}^{n},d\gamma)$ (cf. \cite[Theorem 2.5]{uw19}), it then follows from \cite[Page 113]{h87} that $(O_{t})_{t>0}$ is weak-$*$ continuous on $L^{\infty}(\mathbb{R}^{n},d\gamma)$. However, the Ornstein-Uhlenbeck semigroup $(O_{t})_{t>0}$ does not meet the analyticity assumption in the present paper. Indeed, \cite[Section 5]{mpp02} shows that $(O_{t})_{t>0}$ fails to be analytic on $L^1(\mathbb{R}^n,d\gamma)$;  by \cite[Lemma 1.10.1 and 1.10.2]{Pazy83}, it therefore cannot be analytic on the dual space $L^\infty(\mathbb{R}^n,d\gamma)$.

Let $\mathcal{U}=(U_{t})_{t>0}$ be the Ornstein-Uhlenbeck Poisson semigroup subordinated to $(O_{t})_{t>0}$. As noted in Remark \ref{jiajia}, the Ornstein-Uhlenbeck Poisson semigroup $(U_t)_{t>0}$ is analytic on
$L^{\infty}(\mathbb R^n, d\gamma)$. Consequently, our main result-Theorem \ref{e25}, directly applies to the Ornstein-Uhlenbeck Poisson semigroup $(U_t)_{t>0}$.
In other words, for $\alpha>0$ and $f\in L^{\infty}(\mathbb R^n, d\gamma)$, one has
$$
\|f\|_{\dot{\mathcal{L}}_{\alpha}(\mathcal{U})}:=  \sup_{t>0}\frac{1}{t^\alpha}
\bigl\|U_t\bigl|(I-U_t)^{[\alpha]+1}f\bigr|^{2}\bigr\|^{\frac{1}{2}}_\infty
\simeq_{\alpha} \sup_{t>0}\frac{1}{t^{\alpha-([\alpha]+1)}}
\Bigl\|\frac{\partial^{[\alpha]+1}U_{t}f}{\partial t^{[\alpha]+1}}\Bigr\|_{\infty}=:\|f\|_{\dot{\Lambda}_{\alpha}(\mathcal{U})}.$$
Moreover, using the fact that the family $(U_t)_{t>0}$ is a Markov semigroup on $(\mathbb{R}^n,d\gamma)$ satisfying the $\Gamma^{2}\geq 0$ criterion (see e.g. \cite{tms19}), it follows immediately from \cite[Proposition 6.7]{HJ24} that for $0<\alpha<1/2$,
$$\|f\|_{\dot{\mathcal{L}}_{\alpha}(\mathcal{U})}\simeq_{\alpha}\|f\|_{\dot{\ell}_{\alpha}(\mathcal{U})}:=\sup_{t>0}\frac{1}{t^{\alpha}}\left\|e^{-t\sqrt{L}}|f|^{2}-|e^{-t\sqrt{L}}f|^{2}\right\|^{\frac{1}{2}}_{\infty}.$$

In a manner similar to that discussed in Theorem \ref{c1c}(i), naturally one also concern the difference characterization of $\dot{\Lambda}_{\alpha}(\mathcal{U})$. Indeed, in \cite{ls16, ls17}, Liu and Sj$\ddot{\mathrm{o}}$gren proved that for $0<\alpha<1$ and $f\in L^{\infty}(\mathbb R^n,d\gamma)$,
$$\|f\|_{\dot{\Lambda}_{\alpha}(\mathcal{U})}\simeq_{\alpha}\|f\|_{\dot{\Lambda}_{\alpha}(\gamma)}:=\sup_{x,y\in \mathbb R^n, x\neq y}\frac{|f(x)-f(y)|}{\min{\left\{|x-y|^{\alpha},(\frac{|x-y|}{1+|x|+|y|})^{\frac{\alpha}{2}}+((|x|+|y|)sin\theta)^{\alpha}\right\}}},$$
where $\theta$ is the angle between the vectors $x$ and $y$, with the convention
$\theta = 0$ whenever $x = 0$ or $y = 0.$ A difference characterization for $\alpha\geq1$ seems unavailable in the literature.

Motivated by \cite{ls16, ls17}, Liu and Yang \cite[Proposition 2.3]{yl14} established a difference characterization of the Gauss Campanato space defined geometrically. Given $0<\alpha<1$ and $f\in L^{\infty}(\mathbb R^n,d\gamma)$, the semi-norm of the homogeneous Gauss Campanato space is defined by
\begin{equation}\label{qqqqqq}
\|f\|_{\dot{\mathcal{L}}_{\alpha}(\gamma)}=\sup_{r_B\leq\min\{1,\frac{1}{|c_B|}\}}\frac{1}{(\gamma(B))^\frac{\alpha}{n}}\left\{\frac{1}{\gamma(B)}\int_{B}|f(x)- f_B^\gamma(x)|^2d\gamma\right\}^{\frac{1}{2}},
\end{equation}
where $c_B$ and $r_B$ are the center and radius of $B$ and $f_B^\gamma=\frac{1}{\gamma(B)}\int_{B}f(x)d\gamma$; and they showed that
$$\|f\|_{\dot{\mathcal{L}}_{\alpha}(\gamma)}\simeq_{\alpha}\|f\|_{\dot{\tilde{\Lambda}}_{\alpha}(\gamma)}=\sup_{r_B\leq\min\{1,\frac{1}{|c_B|}\},x,y\in B}\frac{|f(x)-f(y)|}{(\gamma(B))^{\frac{\alpha}{n}}}.$$
Again, an equivalence for $\alpha\geq1$ seems unavailable until now. On the other hand, because of the restriction $r_B\leq\min\{1,\frac{1}{|c_B|}\}$ in \eqref{qqqqqq}, it is easy to note that $\|f\|_{\dot{\Lambda}_{\alpha}(\gamma)}$ is not equivalent to $\|f\|_{\dot{\tilde{\Lambda}}_{\alpha}(\gamma)}$; and a Gauss Campanato space without the restriction  seems to have not been investigated in the literature.

Finally, inspired by Theorem \ref{c1c}(ii), one is tempted to ask for the following equivalence for $0<\alpha<1/2$ and $f\in L^{\infty}(\mathbb R^n,d\gamma)$,
$$\|f\|_{\dot{\mathcal{L}}_{\alpha}(\gamma)}\simeq_\alpha\|f\|_{\dot{\mathcal{L}}_{\alpha,\sqrt{L}}(\gamma)}:=\sup_{r_B\leq\min\{1,\frac{1}{|c_B|}\}}\frac{1}{(\gamma(B))^\frac{\alpha}{n}}\left\{\frac{1}{\gamma(B)}\int_{B}|f(x)- e^{-r_{B}\sqrt{L}}f(x)|^2d\gamma\right\}^{\frac{1}{2}}?$$

We will try to understand all this elsewhere.

\subsection*{Acknowledgments}
The authors are grateful to the referee for valuable comments and suggestions that helped improve the presentation of this paper. This work is partially supported by the National
Natural Science Foundation of China (No. 12071355,
No. 12325105, No. 12031004, No. W2441002, NO. W2611005). Ping Li is supported by NSFC (no. 12371136) and the open Research Fund of Key Laboratory of Nonlinear Analysis \& Applications (Central China Normal University), Ministry of Education, P.R. China. Yuanyuan Jing is supported by the Postdoctoral Fellowship Program of CPSF under Grant Number GZC20252763 and the China Postdoctoral Science Foundation under Grant Number 2025M784342.



\end{document}